\documentclass[journal,twoside,web]{ieeecolor}
\usepackage{generic}
\usepackage{cite}
\usepackage{amsmath,amssymb,amsfonts}
\usepackage{booktabs}
\usepackage{tabularx}
\usepackage{graphicx}
\usepackage{algorithm}
\usepackage{textcomp}
\usepackage[utf8]{inputenc} % German Umlaute
\usepackage[english]{babel} % language
\graphicspath{{images/}} % standards graphics path
\usepackage{mathrsfs}
\usepackage{Macros}
\usepackage[usenames, dvipsnames]{xcolor}

\usepackage{todonotes}
\usepackage{placeins}
\usepackage{algpseudocode}

\usepackage{multicol}
\usepackage{braket}
%%%%%%%%%%%%%%%%%%%%%%%%%

% % Mathematical environments
\newtheorem{lemma}{Lemma}
\newtheorem{proposition}[lemma]{Proposition}
\newtheorem{theorem}[lemma]{Theorem}
\newtheorem{assumption}[lemma]{Assumption}

\newtheorem{remark}[lemma]{Remark}
\newtheorem{definition}[lemma]{Definition}

\def\proofof[#1]{\noindent\hspace{2em}{\itshape #1: }}

% TikZ setup
\usepackage{tikz}
\usepackage{pgfplots}
\definecolor{customblue}{RGB}{6,61,121}
%\usetikzlibrary{external}
%\tikzexternalize[prefix=tikz/]
\pgfplotsset{compat=1.17}
\pgfplotsset{
%soldot/.style={color=myblue,only marks,mark=*, mark options={scale=.3mm}},
standard/.style={
			set layers=tick labels on top% use layers and choose the new layer set
		},
layers/tick labels on top/.define layer set=% define the new layer set based on the standard one
		{axis background,axis grid,axis ticks,axis lines,main,%
			axis tick labels,% <- tick labels before main
			axis descriptions,axis foreground}
		{/pgfplots/layers/standard}
  }
%holdot/.style={color=myblue,fill opacity=0, only marks,mark=*, mark options={scale=.3mm}}}
%\usepackage{tikz-cd}
\usetikzlibrary{cd}
\usetikzlibrary{shapes,fit} %use shapes library if you need ellipse

% code for implication diagrams and "notimplies"
\usetikzlibrary{decorations.markings}
\tikzset{degil/.style={
		decoration={markings,
			mark= at position 0.5 with {
				\node[transform shape] (tempnode) {$\backslash$};
				%\draw[thick] (tempnode.north east) -- (tempnode.south west);
			}
		},
		postaction={decorate}
	}
}
%%%%%%%%%%%%%%%%%%%%%%%%%%%%%%%%%%%%%%%%%%%%%%%%%%%%%%%%%%%%%%%%%%%%%%%%%%
% Additional macros
\allowdisplaybreaks[4]  % allow breaks in equations
\parindent0pt           % no indent in paragraph
%%%%%%%%%%%%%%%%%%%%%%%%%%%%%%%%%%%%%%%%%%%%%%%%%%%%%%%%%%%%%%%%%%%%%%%%%%
% Draftmode (Turn off in line 1 / documentclass)
\usepackage{ifdraft}
\ifdraft{
    % \paperwidth=\dimexpr \paperwidth + 6cm\relax
    % % \leftmargin=\dimexpr\leftmargin + 3cm\relax
    % % \rightmargin=\dimexpr\rightmargin + 3cm\relax
    % \oddsidemargin=\dimexpr\oddsidemargin + 3cm\relax
    % \evensidemargin=\dimexpr\evensidemargin + 3cm\relax
    % \marginparwidth=\dimexpr \marginparwidth + 3cm\relax
    \usepackage{todonotes}      % Comments on the margin
	\usepackage[normalem]{ulem}
    \usepackage[displaymath, mathlines,switch]{lineno}         % Count lines with command \linenumbers
    
}{}

%%%%%%%%%%%%%%%%%%%%%%%%%%%%%%%%%%%%%%%%%%%%%%%%%%%%%%%%%%%%%%%%%%%%%%%%%%
% Internal links
\usepackage[hyphens]{url}					% Korrekte Einbindung von Links in Quellen
\usepackage[hidelinks]{hyperref}			% Verlinkung z.B. Inhaltsverzeichnis
\hypersetup{final=true}
%%%%%%%%%%%%%%%%%%%%%%%%%%%%%%%%%%%%%%%%%%%%%%%%%%%%%%%%%%%%%%%%%%%%%%%%%%

%\def\BibTeX{{\rm B\kern-.05em{\sc i\kern-.025em b}\kern-.08em
    %T\kern-.1667em\lower.7ex\hbox{E}\kern-.125emX}}
%\markboth{\hskip25pc IEEE TRANSACTIONS ON AUTOMATIC CONTROL}
%{Ahmed, Bachmann, and Trenn: Construction of Time-Varying Lyapunov Functions for ISS of Impulsive Switched Systems}

\usepackage{soul} % \st{...} strikes out

\begin{document}
\title{A novel switched systems approach to nonconvex optimisation}
\author{Joel Ferguson, Saeed Ahmed, Juan E. Machado, Michele Cucuzzella, and Jacquelien M. A. Scherpen
\thanks{Submitted on September 6, 2024. This work was supported by the Dutch
Research Council (NWO) under Grant ESI.2019.005 and the German Federal Government, the Federal Ministry of Education and Research, and the State of Brandenburg within the framework of the joint project EIZ: Energy Innovation Center (project numbers 85056897 and 03SF0693A).}
\thanks{Joel Ferguson is with the Maynooth International Engineering College, Maynooth University, Ireland (email: joel.ferguson@mu.ie).}
\thanks{Saeed Ahmed, Michele Cucuzzella, and Jacquelien M. A. Scherpen are with the Jan C. Willems Center for Systems
and Control and the Engineering, and Technology Institute Groningen (ENTEG), Faculty of Science and Engineering, University of Groningen, 9747 AG Groningen, The Netherlands (e-mails: s.ahmed@rug.nl, m.cucuzzella@rug.nl, j.m.a.scherpen@rug.nl).  }
\thanks{Juan E. Machado is with the Brandenburg University of Technology Cottbus-Senftenberg BTU, 03046 Cottbus, Germany (e-mail: machadom@b-tu.de).}
}

\maketitle

\begin{abstract}
We develop a novel switching dynamics that converges to the Karush–Kuhn–Tucker (KKT) point of a nonlinear optimisation problem. This new approach is particularly notable for its lower dimensionality compared to conventional primal-dual dynamics, as it focuses exclusively on estimating the primal variable. Our method is successfully illustrated on general quadratic optimisation problems, the minimisation of the classical Rosenbrock function, and a nonconvex optimisation problem stemming from the control of energy-efficient buildings.
\end{abstract}

\begin{IEEEkeywords}
 Switched systems, optimisation, ISS. 
% Enter key words or phrases in alphabetical order, separated by commas. Using the IEEE Thesaurus can help you find the best standardized keywords to fit your article. Use the thesaurus access request form for free access to the IEEE Thesaurus: \underline{https://www.ieee.org/publications/services/thesaurus-acce}\\
% \underline{ss-page.com.}
\end{IEEEkeywords}
%\linenumbers

\section{Introduction}
{
Solving optimisation problems is at the core of many engineering systems, ranging from airline scheduling to financial markets and machine learning \cite{nocedalNumericalOptimization1999}. Optimisation problems are characterised by a cost function to be minimised (or maximised), often subject to equality and/or inequality constraints that any feasible solution must satisfy \cite{boydConvexOptimization2004}. Problems of this form are commonly categorised as either being convex or nonconvex, with convex problems allowing much stronger convergence guarantees of the corresponding optimisation algorithms. Nonconvex problems, on the other hand, are significantly more challenging to solve due to the existence of multiple solutions and stationary points.
}

{
One of the main challenges in solving constrained optimisation problems is handling inequality constraints, which may either be active (binding) at a solution or have no effect. This complicates the search for a solution, as it is not known a priori which constraints will be active. To resolve this issue, optimisation routines define a series of sub-problems that eliminate direct handling of inequality constraints but ensure they are satisfied in the limit. Active set methods treat a subset of constraints as binding to define a sub-problem subject only to equality constraints, updating the active set iteratively until a solution is found \cite{nocedalNumericalOptimization1999}. Interior point methods, by contrast, modify the objective function with barrier functions that ensure intermediate values remain within the feasible set. The barrier function perturbs the solution, but this is resolved by defining a sequence of sub-problems, where the solutions converge to the original optimisation problem's solution \cite{boydConvexOptimization2004}.
}

{
A constrained optimisation problem can be described by a single scalar function called the Lagrangian, which combines the objective function with the constraints, weighted by Lagrange multipliers. The Lagrangian defines a saddle function for the optimisation problem, where it is minimised with respect to the decision variables (primal variables) and maximised with respect to the Lagrange multipliers (dual variables). This construction has led to a class of optimisation solutions called primal-dual methods that simultaneously attempt to solve for the primal and dual variables, rather than solving only for the primal variables. Primal-dual methods can be implemented as an active set method \cite{forsgrenPrimalDualActiveset2016} or an interior-point method \cite{boydConvexOptimization2004}.
}

%\red{
%While primal-dual optimisation methods were originally proposed in discrete time, it was proposed in \cite{kellyRateControlCommunication1998} that some classes of optimisation problems could be solved in continuous time using \textit{primal-dual dynamics}. This approach defines an ODE from the gradient of the Lagrangian that converges asymptotically to a solution of the optimisation problem. Adopting a continuous-time setting has the advantage of better describing some classes of problems, such as time varying problems \cite{bianchinTimeVaryingOptimizationLTI2022}, whilst enabling a rich set of nonlinear analysis tools for studying stability properties \cite{maDistributedApproachesSolving2016}. The approach also has significant benefits for distributed optimisation problems \cite{wangSolvingSystemsLinear2014} and enables interconnecting with a control system for online optimisation \cite{steginkUnifyingEnergyBasedApproach2017}. Over the past decade, this algorithm has garnered significant attention due to its diverse applications in communication networks for rate control \cite{kellyRateControlCommunication1998}, resource allocation \cite{feijerStabilityPrimalDual2010},   distributed convex optimisation \cite{wangSolvingSystemsLinear2014}, and optimisation over power networks \cite{dallaneseDistributedOptimalPower2013}, \cite{dorflerBreakingHierarchyDistributed2016}, \cite{steginkUnifyingEnergyBasedApproach2017}, to name a few.
%} 

{
While primal-dual optimisation methods were originally developed in discrete time, continuous-time primal-dual dynamics have been widely used as an analytical and modeling framework for distributed optimisation problems. In particular, the work of \cite{kellyRateControlCommunication1998} introduced a continuous-time model for internet congestion control, in which primal-dual dynamics arise as an approximation of distributed rate-control mechanisms. In this setting, the optimisation algorithm is represented as an ordinary differential equation defined by the gradient of the Lagrangian, enabling the study of stability and convergence of the resulting closed-loop system. Adopting a continuous-time formulation allows stability of the system to be analysed independently of discretisation effects and enables the use of tools from nonlinear systems theory \cite{maDistributedApproachesSolving2016}. This continuous time perspective is particularly natural for time-varying optimisation problems \cite{bianchinTimeVaryingOptimizationLTI2022}, distributed optimisation over networks \cite{wangSolvingSystemsLinear2014}, and settings where optimisation dynamics are interconnected with physical control systems for online optimisation \cite{steginkUnifyingEnergyBasedApproach2017}. Consequently, continuous-time primal-dual dynamics have found widespread application in communication networks \cite{kellyRateControlCommunication1998}, resource allocation \cite{feijerStabilityPrimalDual2010}, distributed convex optimisation \cite{wangSolvingSystemsLinear2014}, and optimisation over power networks \cite{dallaneseDistributedOptimalPower2013,dorflerBreakingHierarchyDistributed2016,steginkUnifyingEnergyBasedApproach2017}.
}

{
The convergence properties of primal-dual dynamics have been widely studied, particularly for strictly convex problems. In this setting, it has been shown that the solution of the optimisation problem is asymptotically stable \cite{cherukuriAsymptoticConvergenceConstrained2016}, \cite{cherukuriDistributedCoordinationPower2017}, \cite{cherukuriRoleConvexitySaddlePoint2018}, \cite{nguyenContractionRobustnessContinuous2018}, \cite{yamashitaPassivitybasedGeneralizationPrimal2020}. Further analysis has demonstrated that, when linear constraints are present, the method exhibits exponential stability \cite{quExponentialStabilityPrimalDual2019}, \cite{ozaslanGlobalExponentialStability2023}, \cite{ozaslanTightLowerBounds2023}. Beyond the strictly convex case, extensions to non-convex problems have been considered. For quasi-convex problems \cite{cherukuriSaddlePointDynamicsConditions2017} and problems with quadratic constraints \cite{maDistributedApproachesSolving2016}, the stability analysis confirms only local asymptotic stability under specific technical conditions. An alternative approach to handling non-strictly convex problems involves convexifying the Lagrangian by adding a regularisation term, constructing an augmented Lagrangian that ensures strict convexity. This method, explored in \cite{richertRobustDistributedLinear2015}, \cite{cherukuriDistributedCoordinationPower2017}, \cite{steginkUnifyingEnergyBasedApproach2017}, relaxes the strict convexity requirement but still assumes convexity of the underlying objective functions. When problems involve inequality constraints, an additional consideration is ensuring the positivity of Lagrange multipliers. To enforce this, projection operators are applied to the dual dynamics, extending primal-dual dynamics into the non-smooth setting, which includes switched systems \cite{cherukuriAsymptoticConvergenceConstrained2016}, \cite{cherukuriRoleConvexitySaddlePoint2018}, \cite{yamashitaPassivitybasedGeneralizationPrimal2020}. This extension ensures that the Lagrange multipliers remain within the feasible region, maintaining the validity of the optimisation process.
}

In this paper, we propose a novel algorithm for solving a class of nonconvex optimisation problems in continuous time. The algorithm introduces switching dynamics which, under a technical assumption on the constraints and objective function, provably converge to a KKT point. The switching law determines which inequality constraints are binding, analogous to active set methods, and ensures invariance of the feasible set, similar to interior-point approaches. In contrast to standard primal-dual dynamics that evolve both primal and dual variables, the proposed method computes only the primal variables, resulting in a lower-dimensional optimisation dynamics. A further consequence of this formulation is that damping can be included in all coordinates, allowing the objective function to serve as an ISS-Lyapunov function for stability analysis. The effectiveness of the proposed algorithm is demonstrated through case studies including quadratic programming, constrained minimisation of Rosenbrock’s function, and optimisation of steady-state operating points in HVAC systems.

\emph{Notation}: 
The notation is simplified wherever no confusion can arise from the context. We denote a null matrix of size $n\times m$ by 
$0_{n\times m}$, and an identity matrix of size $n\times n$ by  $I_n$.
For a function $f:\mathbb{R}^n \to \mathbb{R}$, we denote the transposed gradient as $\nabla f:= \left(\frac{\partial f}{\partial x}\right)^\top$.
The $i^{th}$ standard basis vector is denoted by  $e_i$.
The set difference is denoted by  $A\backslash B$.
For a function $g:\mathbb{R}\to G$, where $G$ is the image of the function $g$, $g(t^+) = \lim_{\tau\to t^+}\sigma(\tau)$ is the limit from above.

\section{Background and Problem Statement}\label{sec:probStatement}
In this paper, we consider the nonlinear optimisation problem:
\begin{equation}\label{probFormulation}
	\begin{split}
\underset{z\in\mathbb{R}^n}{\mathrm{min}} \ f(z) \ \
		&\text{s.t.} \ \underbrace{A_{eq}(z)\nabla_z f + d_{eq}}_{:= g_{eq}(z)} = 0_{m\times 1} \\
		&\phantom{s.t. }\ \underbrace{A_{ineq}(z)\nabla_z f + d_{ineq}}_{:= g_{ineq}(z)} \leq 0_{p\times 1},
	\end{split}
\end{equation}
where $z\in\mathbb{R}^n$ is a vector of $n$ decision variables, and $f:\mathbb{R}^n\rightarrow\mathbb{R}$ is the cost function that is assumed to be differentiable and radially unbounded with a radially unbounded Jacobian, i.e.,  $f(z), ||\nabla_z f(z)||\to\infty$ as $||z||\to\infty$. The problem is subject to $m$ nonlinear equality constraints $g_{eq}: \mathbb{R}^n\rightarrow\mathbb{R}^m$ described by a full rank matrix $A_{eq}(z)\in\mathbb{R}^{m\times n}$ and a constant vector $d_{eq}\in\mathbb{R}^m$. Additionally, the problem must satisfy $p$ nonlinear inequality constraints $g_{ineq}:\mathbb{R}^n\rightarrow \mathbb{R}^p$ described by a full rank matrix $A_{ineq}(z)\in\mathbb{R}^{p\times n}$ and a constant vector $d_{ineq}\in\mathbb{R}^p$.

To ensure that there exists a solution to \eqref{probFormulation}, the feasibility set is assumed to be non-empty. This is formalised by first defining the closed set on which the inequality constraints are satisfied as
\begin{equation*}\label{domainDefinition}
	\mathcal{G}_{ineq}
	=
	\left\lbrace
		z\in\mathbb{R}^n \ | \ g_{ineq}(z) \leq 0_{p\times 1}
	\right\rbrace.
\end{equation*}
If there are no inequality constraints ($p = 0$), then $\mathcal{G}_{ineq} = \mathbb{R}^n$. We additionally label the set on which the equality constraints are satisfied by
\begin{equation*}\label{GsetDefinition}
	\begin{split}
		\mathcal{G}_{eq} 
		&= 
		\big\lbrace
			z\in\mathbb{R}^n \ | \ g_{eq}(z) = 0_{m\times 1}
		\big\rbrace.
	\end{split}
\end{equation*}
Moving forward, the feasibility set $\Gamma$ is defined as the intersection of the sets $\mathcal{G}_{ineq}$ and $\mathcal{G}_{eq}$, i.e.,
\begin{equation}
    \label{feasability_set}
\Gamma:=\mathcal{G}_{ineq}\cap\mathcal{G}_{eq},
\end{equation}
which is assumed to be non-empty.

\begin{remark}
	Note that in contrast to standard formulations of optimisation problems, the constraints in \eqref{probFormulation} are defined as a function of the gradient of the objective function. {This formulation was selected primarily for technical reasons, but captures several problems of practical interest. As the matrices $A_{eq}(z), A_{ineq}(z)$ are nonlinear functions of $z$,  it is shown in Section~\ref{sec:examples} that quadratic programming problems and certain nonconvex optimisation problems, motivated from practice, can be recast into the form of \eqref{probFormulation}.}
\end{remark}

\begin{remark}
	The requirement of $||\nabla_z f(z)||$ being radially unbounded is used to ensure the boundedness of the trajectories of the proposed optimisation dynamics. {While not common within optimisation applications, this condition is often used when analysing ISS systems \cite{satohInputtostateStabilityStochastic2017}.} This requirement is satisfied, for example, by positive-definite quadratic functions.
\end{remark}

\subsection{First-order optimality conditions}
\label{sec:probStatement:KKT}
The Lagrangian function corresponding to the optimisation problem~\eqref{probFormulation} can be defined as
\begin{equation}\label{Lagrangian}
	\mathcal{L}(z,\lambda,\nu)
	=
	f(z) + \lambda^\top g_{eq}(z) + \nu^\top g_{ineq}(z),
\end{equation}
where the vectors $\lambda\in\mathbb{R}^m$ and $\nu\in\mathbb{R}^p$ are Lagrange multipliers associated with the equality and inequality constraints, respectively. Using the notion of Lagrange multipliers, the first-order necessary conditions for optimality, also referred to as the KKT conditions, can be stated as follows: 

\begin{theorem}[Theorem 12.1 \cite{nocedalNumericalOptimization1999}]
	Suppose that the cost function $f$ and the constraints $g_{eq}$ and $g_{ineq}$ are differentiable at a point $z^\star\in\mathbb{R}^n$. {Moreover, assume that the gradients of the constraints $g_{eq,i}(z^\star)$, $g_{ineq,j}(z^\star)$ are all linearly independent for $i\in\left\lbrace1,\dots,m\right\rbrace$ and $j\in\left\lbrace1,\dots,p\right\rbrace$ such that $g_{ineq,j}(z^\star) = 0$.} If $z^\star$ is a locally optimal solution of the problem \eqref{probFormulation}, then there exist constant vectors $\lambda^\star\in\mathbb{R}^m$ and $\nu^\star\in\mathbb{R}^p$ such that 
	\begin{subequations}\label{KKTconditions}
		\begin{align}
			&\nabla f(z^\star)
			+ \frac{\partial^\top g_{eq}}{\partial z}(z^\star)\lambda^\star
			+ \frac{\partial^\top g_{ineq}}{\partial z}(z^\star) \nu^\star
			= 0_{n\times 1} \label{KKTconditions:gradient} \\
			& g_{eq}(z^\star)=0_{m\times 1}\\
			& g_{ineq}(z^\star)\leq 0_{p\times 1} \label{KKTconditions:gineqNegative}\\
			&\nu^\star \geq 0_{p\times 1} \label{KKTconditions:nuPositive}\\
			&\nu^\star \circ g_{ineq}(z^\star) = 0_{p\times 1}, \label{KKTconditions:complimentary}
		\end{align}
	\end{subequations}
	where $\circ$ denotes the Hadamard product.
\end{theorem}

\subsection{Second-order sufficient condition}\label{sec:probStatement:sufficientCond}
Note that the KKT conditions~\eqref{KKTconditions}, when applied to convex optimisation problems, are both necessary and sufficient as the duality gap between the primal and dual problems is zero \cite{boydConvexOptimization2004}. In this work, however, we consider optimisation problems that are not necessarily convex, which implies that the KKT conditions \eqref{KKTconditions} are only necessary for local optimality. For nonconvex optimisation problems, the search space may contain saddle points and local maxima that would satisfy the first-order conditions without being locally optimal. Once a point satisfying the KKT conditions is found, additional examination should be performed to determine if a candidate point is indeed locally optimal. To this end, a second-order sufficient condition for local optimality can be introduced by considering the second partial derivative of the Lagrangian function \eqref{Lagrangian} with respect to $z$. To introduce such a condition, consider that we have a triple $(z^\star,\lambda^\star,\nu^\star)$ that satisfies the KKT conditions \eqref{KKTconditions}. We then define the set of all vectors at the point $z^\star$ that are locally consistent with the constraints by
\[
  G = \Set{ w \ | 
  \begin{array}{l}
    \frac{\partial g_{eq,i}}{\partial z}(z^\star)w = 0 \ \text{for} \ i\in\mathcal{M} \\
    \frac{\partial g_{ineq,i}}{\partial z}(z^\star)w = 0 \ \text{for} \ i\in\mathcal{P} \ \text{with} \ \nu_i^\star > 0 \\
    \frac{\partial g_{ineq,i}}{\partial z}(z^\star)w \leq 0 \ \text{for} \ i\in\mathcal{P} \ \text{with} \ \nu_i^\star = 0 \\
  \end{array}},
\]
where $w\in\mathbb{R}^n$, $\mathcal{M} = \left\lbrace1, \dots, m\right\rbrace$, and $\mathcal{P} = \left\lbrace1, \dots, p\right\rbrace$.

Using this set definition, a sufficient condition for local optimality can be stated. The condition utilises the Hessian of the Lagrangian function and checks to see if the sign of the Hessian in all directions within the set $G$, is positive. This is equivalent to saying that the Lagrangian function should be minimised in all directions that are consistent with the constraint equations.

\begin{theorem}[{\cite[Theorem 12.6]{nocedalNumericalOptimization1999}}]\label{thm:KKTsufficient}
	Suppose that the triple $(z^\star,\lambda^\star,\nu^\star)$ satisfies the KKT conditions \eqref{KKTconditions}. If
	\begin{equation*}
		w^\top\frac{\partial^2\mathcal{L}}{\partial z^2}(z^\star,\lambda^\star,\nu^\star) w > 0
	\end{equation*}
	for all $w\in G$, then $z^\star$ is a strict locally optimal solution of~\eqref{probFormulation}.
\end{theorem}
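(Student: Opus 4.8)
The plan is to argue by contradiction, extracting from any putative counterexample a limiting direction that must lie in the critical cone $G$ yet certify nonpositive curvature of the Lagrangian, in violation of the hypothesis. Suppose $z^\star$ is \emph{not} a strict local minimiser. Then there is a sequence of feasible points $z_k\in\Gamma$ with $z_k\to z^\star$, $z_k\neq z^\star$, and $f(z_k)\leq f(z^\star)$. Writing $z_k = z^\star + t_k d_k$ with $t_k = \|z_k-z^\star\|\to 0^+$ and $\|d_k\|=1$, compactness of the unit sphere lets me pass to a subsequence along which $d_k\to d$ for some unit vector $d$. The argument then splits into two claims: that $d\in G$, and that $d$ forces $d^\top\frac{\partial^2\mathcal{L}}{\partial z^2}(z^\star,\lambda^\star,\nu^\star)d\leq 0$.

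To show $d\in G$, I would Taylor-expand the constraints and the objective to first order along the sequence. Feasibility $g_{eq,i}(z_k)=0$, divided by $t_k$ and sent to the limit, gives $\frac{\partial g_{eq,i}}{\partial z}(z^\star)d=0$ for $i\in\mathcal{M}$; similarly $g_{ineq,i}(z_k)\leq 0 = g_{ineq,i}(z^\star)$ gives $\frac{\partial g_{ineq,i}}{\partial z}(z^\star)d\leq 0$ at every active constraint, which already covers the weakly active case $\nu_i^\star=0$. The delicate step is upgrading this to equality at the strongly active constraints $\nu_i^\star>0$. For that I combine $f(z_k)\leq f(z^\star)$, which yields $\nabla f(z^\star)^\top d\leq 0$, with the stationarity relation \eqref{KKTconditions:gradient}. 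Substituting the latter and using the equality-constraint identities just derived, together with complementary slackness \eqref{KKTconditions:complimentary} and $\nu^\star\geq 0$ from \eqref{KKTconditions:nuPositive}, gives $\nabla f(z^\star)^\top d = \sum_{i}\nu_i^\star\left(-\frac{\partial g_{ineq,i}}{\partial z}(z^\star)d\right)\geq 0$, since each summand is nonnegative. Hence $\nabla f(z^\star)^\top d=0$ and every summand vanishes, forcing $\frac{\partial g_{ineq,i}}{\partial z}(z^\star)d=0$ whenever $\nu_i^\star>0$. This places $d$ in $G$.

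For the second claim I would evaluate $\mathcal{L}(\cdot,\lambda^\star,\nu^\star)$ along the sequence. Because $z_k$ is feasible and $\nu^\star\geq 0$, one has $\nu^{\star\top}g_{ineq}(z_k)\leq 0$ and $g_{eq}(z_k)=0$, so $\mathcal{L}(z_k,\lambda^\star,\nu^\star)\leq f(z_k)\leq f(z^\star)=\mathcal{L}(z^\star,\lambda^\star,\nu^\star)$, the final equality by complementary slackness. A second-order Taylor expansion of $\mathcal{L}$ in $z$ about $z^\star$, whose first-order term drops out by \eqref{KKTconditions:gradient}, then gives $\tfrac{1}{2}t_k^2\,d_k^\top\frac{\partial^2\mathcal{L}}{\partial z^2}(z^\star,\lambda^\star,\nu^\star)d_k + o(t_k^2)\leq 0$. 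Dividing by $t_k^2$ and letting $k\to\infty$ yields $d^\top\frac{\partial^2\mathcal{L}}{\partial z^2}(z^\star,\lambda^\star,\nu^\star)d\leq 0$ with $d\neq 0$ and $d\in G$, contradicting the curvature hypothesis. The main obstacle is the bookkeeping in the second paragraph: extracting $\frac{\partial g_{ineq,i}}{\partial z}(z^\star)d=0$ for strongly active constraints, precisely where objective decrease, stationarity, and complementary slackness must be played off against one another.
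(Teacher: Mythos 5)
The paper does not actually prove this statement: it is quoted, with a citation, from Nocedal and Wright (Theorem 12.6), so there is no in-paper proof to compare against. Your argument is, essentially line for line, the standard textbook proof of that cited theorem: contradiction via a sequence of feasible points with non-increasing objective, extraction of a unit limiting direction $d$, the interplay of $\nabla f(z^\star)^\top d\le 0$ with stationarity and complementary slackness to force $\frac{\partial g_{ineq,i}}{\partial z}(z^\star)d=0$ at the strongly active constraints, and a second-order expansion of $z\mapsto\mathcal{L}(z,\lambda^\star,\nu^\star)$ whose linear term vanishes by \eqref{KKTconditions:gradient}. The argument is sound; it implicitly requires $f$, $g_{eq}$, $g_{ineq}$ to be twice differentiable at $z^\star$, which the second-order hypothesis presupposes anyway, and the hypothesis should of course be read as holding for nonzero $w\in G$.

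One caveat, which is a defect of the statement as transcribed in the paper rather than of your proof: the set $G$ above imposes $\frac{\partial g_{ineq,i}}{\partial z}(z^\star)w\le 0$ for \emph{every} $i$ with $\nu_i^\star=0$, including inactive constraints with $g_{ineq,i}(z^\star)<0$. Your limiting direction $d$ is only shown to satisfy this inequality at \emph{active} constraints --- the difference-quotient argument yields no sign information when $g_{ineq,i}(z^\star)<0$ --- so strictly speaking you have placed $d$ in the standard critical cone of Nocedal--Wright, which contains the paper's $G$ but need not equal it, and the curvature hypothesis as literally stated is then not guaranteed to apply to $d$. With the standard definition (sign conditions only at active inequality constraints), your contradiction closes exactly as intended; you should either state that you are proving the Nocedal--Wright version or note that the conditions on inactive constraints should be dropped from $G$.
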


\subsection{Switched systems}\label{sec:switchedSystems}
We now review some basic properties of switched systems that will be used to define the optimisation dynamics.
Switched systems are defined by (i) a family of subsystems $h_s:\mathbb{R}^n\to\mathbb{R}^n$, where $s\in\mathcal{S}$ and $\mathcal{S}$ is an index set, and (ii) a piecewise constant switching signal $\sigma:\left[0,\infty\right)\to\mathcal{S}$, which determines the subsystem $h_s$ that is active at any given time. More formally, switched systems can be described by a differential equation of the form:
\begin{equation}\label{switchedSystem}
	\dot z = h_{\sigma(t)}(z),
\end{equation}
which has a continuous right-hand side for almost all time. We distinguish the points of discontinuity of $\sigma(t)$, also known as switching instants, by $t_1, t_2, \dots, t_k$. The number of switches that a system undergoes can be either finite or infinite~\cite{liberzonSwitchingSystemsControl2003}.

When considering solutions to switched systems of the form \eqref{switchedSystem}, it can be seen that the solution has a discontinuous derivative at the instants of switching. Due to this, we consider Carath\'eodory solutions on an interval $\left[0, t\right]$ given by
\begin{equation}
	\begin{split}
		z(t)
		&=
		z(0)
		+
		\int_0^t h_{\sigma(\tau)}(z(\tau)) \ d\tau.
	\end{split}
\end{equation}
Such solutions are absolutely continuous and satisfy \eqref{switchedSystem} for almost all $t$; see \cite{cortesDiscontinuousDynamicalSystems2008}. {A solution is called complete if it is defined for all time $t$.}

The behaviour of solutions of switched systems is complicated with the possibility of chattering and Zeno behaviours~\cite{liberzonSwitchingSystemsControl2003}. To exclude such behaviours, it is commonplace to consider an average dwell-time condition, which limits the maximum number of switches that can occur in a fixed amount of time. Letting $N_\sigma(t,\tau)$ denote the number of switches that occur on an open time interval $(\tau,t)$, where $t\geq\tau\geq 0$, we say that a system has an average dwell-time if there exist constants $N_0$ and $\tau_D$ such that
\begin{equation}\label{averageDwellTimeDefinition}
	N_\sigma(t,\tau) \leq N_0 + \frac{t-\tau}{\tau_D}.
\end{equation}
In this expression, the constant $N_0$ is the chatter-bound that describes the maximum number of switches that can occur independent of any time consideration, and the constant $\tau_D$ is the average dwell-time that determines the maximum number of switches that can occur in a single unit of time \cite{hespanhaStabilitySwitchedSystems1999}.

We now introduce the notions of a common Lyapunov function and invariance for switched systems.
\begin{definition}[Common Lyapunov function \cite{bacciottiInvariancePrincipleNonlinear2005}]
	A positive definite and differentiable function $V$ is a common weak Lyapunov function for \eqref{switchedSystem} if
	\begin{equation*}
		\nabla_z^\top V(z)h_s(z) \leq 0
	\end{equation*}
	for all $z$ in some neighbourhood of the minima of $V(z)$ and all $s\in\mathcal{S}$.
\end{definition}

\begin{definition}[Weakly invariant set \cite{mancilla-aguilarInvariancePrinciplesSwitched2011}]
	Given a family of complete trajectories $\mathcal{T}^\star$ of \eqref{switchedSystem}, a non-empty subset $M\subset\mathbb{R}^n\times\mathcal{S}$ is weakly invariant with respect to $\mathcal{T}^\star$ if for each $(\xi,\gamma)\in M$, there exists a trajectory in $\mathcal{T}^\star$ such that $z(0)=\xi,\sigma(0)=\gamma$ and $(z(t),\sigma(t))\in M$ for all time.
\end{definition}
In both definitions given above, the term \emph{weak}  reflects the fact that solutions of switched systems may not be unique. In the cases when the uniqueness of solutions is ensured, these definitions coincide with the standard definitions of Lyapunov functions and invariant sets.

\subsection{Problem statement, approach, and contributions}
The objective of this work is to propose a switched system of the form \eqref{switchedSystem} that converges to a point $z^\star$ satisfying the KKT conditions \eqref{KKTconditions} of the nonlinear optimisation problem \eqref{probFormulation}. 

The proposed method requires that the initial condition of the switched system satisfies the inequality constraints, i.e., $z(0)\in\mathcal{G}_{ineq}$. The switching law that is defined as part of the dynamics is then used to ensure that the set $\mathcal{G}_{ineq}$ is positively invariant. Note that this is similar to interior point methods that utilise barrier functions for a similar effect. Convergence of the algorithm is ensured via Lyapunov analysis by using the objective function $f$ as a common Lyapunov function between all of the possible subsystems.

Compared to existing methods, the main contributions of our work are:
{
\begin{itemize}

\item The proposed optimisation dynamics are applicable to nonconvex optimisation problems and provably converge to a solution of the optimisation problem, provided that a technical condition is satisfied. In contrast with existing primal dual dynamic solutions for nonconvex problems \cite{cherukuriSaddlePointDynamicsConditions2017}, \cite{maDistributedApproachesSolving2016} the domain of attraction is large, defined by the set $\mathcal{G}_{ineq}$.

\item We propose a novel approach to continuous-time optimisation using a switched systems framework. Compared to existing primal-dual dynamics methods, our method focuses on estimating only the primal variable without estimating Lagrange multipliers. This results in optimisation dynamics of dimension $n$, which is significantly lower than similar primal-dual methods that have a dimension of $n+m+p$.

\end{itemize}
}
%%%%%%%%%%%%%%%%%%%%%%%%%%%%%%%%%%%%%%%%%%%%%%%%%%%%%%%%%%%%%%%%%%%%%%%%%%%%%%%%%%%%%%%%%%%%%%%%%%%%%%%%%%%%%%%%%%%%%%%%%%%%%%%%%%%%%%%%%%%%%%%%%%%%%%%%%%%%%%%%%%

\section{optimisation Dynamics}
\label{sec:optim_dyn}
This section introduces the optimisation dynamics that achieve the problem statement. We first introduce some additional notation related to the inequality constraints.

\subsection{Constraint and switching notation}
Considering the condition \eqref{KKTconditions:complimentary}, the Lagrange multipliers associated with inequality constraints satisfying $g_{ineq,i}(z^\star) < 0$ are equal to zero. When substituting these values into  \eqref{KKTconditions:gradient}, the gradients corresponding to inequality constraints that hold strictly vanish, implying that inequality constraints that hold strictly do not influence the gradient condition at a solution. When constructing a set of optimisation dynamics that seek a candidate solution $z^\star$, a complication arises from the fact that we do not know a-priori which inequality constraints will hold with equality at a solution $z^\star$.

This complication is addressed by introducing a switching law that selects an appropriate subsystem for \eqref{switchedSystem} based on the state of the inequality constraints. We now introduce some notation that will enable the definition of subsystems and a switching law based on the inequality constraints. We let $\mathcal{P} = \{1, \dots, p\}$ be the set containing all indices associated with inequality constraints. The power set $\mathbb{P}(\mathcal{P})$ contains all possible combinations of inequality constraint indices, including the empty set. The set-valued function $\mathcal{I}(z)$ is defined as the set of all inequality indices that correspond to the inequality constraints holding with equality at a given point $z$, i.e., 
\begin{equation*}
	\mathcal{I}(z)
	=
	\left\lbrace
		i\in\mathcal{P} \ | \ g_{ineq,i}(z) = 0
	\right\rbrace.
\end{equation*}
Finally, the set $\mathcal{S}$ is the set of all inequality constraint index combinations that can simultaneously and consistently hold with equality. More formally, the set is defined by
\begin{equation}\label{setSdefinition}
	\mathcal{S}
	=
	\left\lbrace
		s\in\mathbb{P}(\mathcal{P}) \ | \ \exists z\in\mathbb{R}^n \ \text{with} \ s \subseteq \mathcal{I}(z)
	\right\rbrace.
\end{equation}
In the sequel, the set $\mathcal{S}$ will form the index set for the switched systems and the switching law will be defined such that $\sigma:\left[0,\infty\right)\to\mathcal{S}$. For any given point in time, the value of $\sigma(t)\in\mathcal{S}$ will be referred to as the active set.

Now we introduce some notation related to the constraints in \eqref{probFormulation} that is used to define the right-hand vector fields of \eqref{switchedSystem}. For a given element $\mathcal{A}\in\mathcal{S}$ with cardinality $a$, we construct an indicator matrix $E_\mathcal{A}\in\mathbb{R}^{a\times p}$, which contains as rows a standard basis vector for each index in the set $\mathcal{A}$. The indicator matrix has the form
\begin{equation*}
	E_\mathcal{A}
	=
	\begin{bmatrix}
		e_i &
		\cdots &
		e_k
	\end{bmatrix}^\top,
\end{equation*}
where $i, k\in\mathcal{A}$ and each $e_j$ is the $j^{th}$ standard basis vector. For example, for $\mathcal{A} = \left\lbrace1, 3\right\rbrace$, we have $E_{\left\lbrace1, 3\right\rbrace}
	=
	\begin{bmatrix}
		e_1 &
		e_3
	\end{bmatrix}^\top$.
The indicator matrix is used to remove all inequality constraints whose indices do not appear in the set $\mathcal{A}$ via the multiplication:
\begin{equation*}
	\begin{split}
		E_\mathcal{A}
		\underbrace{
		\left[
			A_{ineq}(z)\nabla_z f + d_{ineq}
		\right]
		}_{g_{ineq}(z)} \leq 0_{a\times 1}.
	\end{split}
\end{equation*}
Moving forward, we simplify notation by collecting all equality constraints and inequality constraints with indices contained in $\mathcal{A}$ into a single vector
\begin{equation}\label{constraintsReduced}
	\begin{split}
			g_\mathcal{A}(z)
			:=
			\underbrace{
				\begin{bmatrix}
					A_{eq}(z) \\
					E_\mathcal{A}A_{ineq}(z)
				\end{bmatrix}
			}_{:= A_\mathcal{A}(z)}
			\nabla_z f
			+
			\underbrace{
				\begin{bmatrix}
					d_{eq} \\
					E_\mathcal{A}d_{ineq}
				\end{bmatrix}
			}_{:= d_\mathcal{A}}.
	\end{split}
\end{equation}
The constraint vector obtained via the switching law $g_{\sigma(t)}(z)$ at any given $t$ is called the active constraint vector.

\subsection{Technical assumption}
The optimisation dynamics requires a technical assumption to ensure that its solutions converge to a point satisfying the KKT conditions \eqref{KKTconditions}. The required technical assumption is related to the matrix
\begin{equation}\label{Bdefinition}
	\begin{split}
	 B_\mathcal{A}(z)
		:=
		\underbrace{
		\begin{bmatrix}
			\frac{\partial g_{eq}}{\partial z}(z) \\
			E_\mathcal{A}\frac{\partial g_{ineq}}{\partial z}(z)
		\end{bmatrix}
		}_{\frac{\partial g_\mathcal{A}}{\partial z}(z)}
		\underbrace{
		\begin{bmatrix}
			A_{eq}^\top(z) & A_{ineq}^\top(z)E_\mathcal{A}^\top
		\end{bmatrix}
		}_{A_\mathcal{A}^\top(z)},
	\end{split}
\end{equation}
which is obtained by multiplying the Jacobian of the active constraint vector $g_\mathcal{A}(z)$ with the matrix $A_\mathcal{A}(z)$, both defined in \eqref{constraintsReduced}. This matrix is used in the sequel to add damping-like terms to the optimisation dynamics and is required to be positive in the domain of interest.

\begin{assumption}\label{Assumption1}
	It is assumed that $ B_\mathcal{A}(z)$ is invertible and the symmetric component of $ B_\mathcal{A}(z)$ is uniformly positive definite for all $z\in\mathcal{G}_{ineq}$ and $\mathcal{A}\in\mathcal{S}$. Moreover, there exist constants $\beta_{\mathcal{A}}^1$ and $\beta_{\mathcal{A}}^2 > 0$ such that
\begin{equation}\label{Bpositive}
		\begin{split}
			\frac12\left[ B_\mathcal{A}(z) +  B_\mathcal{A}^\top(z)\right]  
			\geq
			\frac12\beta_{\mathcal{A}}^1 &B_\mathcal{A}(z) B_\mathcal{A}^\top(z) \geq
			\beta_{\mathcal{A}}^2 I_{m+a} > 0.
		\end{split}
	\end{equation}
\end{assumption}

{
\begin{remark}
	Assumption \ref{Assumption1} is made primarily for technical reasons and is the primary limitation of the proposed approach. In the case of quadratic problems subject to linear constraints, the condition is equivalent to linear independence of the constraints. However, in the nonconvex case the interpretation of this assumption requires further investigation. Nevertheless, in Section~\ref{sec:examples}, we provide two nonconvex examples, motivated from practice, that satisfy this assumption.
\end{remark}
}

\begin{remark}
	The domain of Assumption \ref{Assumption1} is limited to $z\in\mathcal{G}_{ineq}$. In the sequel, it will be shown that this domain is positively invariant along the solutions to the proposed optimisation dynamics, which implies that the technical assumption must only hold on this region. %In Figure \ref{ConstraintSetExample}, for example, the assumption need only be satisfied on the interior of the triangle in which the inequality constraints are satisfied.
\end{remark}

\begin{remark}\label{rem:fullRankJacobian}
	As the matrix $B_\mathcal{A}(z)$ is invertible, it is necessary that the Jacobian $\frac{\partial g_\mathcal{A}}{\partial z}(z)$ has full row rank for all $z\in\mathcal{G}_{ineq}$. This implies that the constraints must be linearly independent on the same set, {which is a necessary condition for the first-order optimality conditions, revised in Section~\ref{sec:probStatement:KKT}}.
\end{remark}

\begin{lemma}\label{lem:BAinequality}
	From Assumption \ref{Assumption1}, it follows that
	\begin{equation*}
		\begin{aligned}
			\frac12\left[ B_\mathcal{A}^{-\top}(z) +  B_\mathcal{A}^{-1}(z)\right]
			&\geq
			\frac12\beta_{\mathcal{A}}^1 I_{m+a} \\
			&\geq
			\beta_{\mathcal{A}}^2  B_\mathcal{A}^{-1}(z) B_\mathcal{A}^{-\top}(z)
			>
			0.
		\end{aligned}
	\end{equation*}
\end{lemma}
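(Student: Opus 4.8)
The plan is to obtain both inequalities of the lemma by applying a single congruence transformation to the two inequalities already guaranteed by Assumption~\ref{Assumption1}. Throughout, I would abbreviate $B := B_\mathcal{A}(z)$ and recall that $B$ is invertible on $\mathcal{G}_{ineq}$, so that $B^{-1}$ and $B^{-\top}$ exist. The crucial elementary fact I would invoke is that congruence by an invertible matrix preserves the Loewner ordering: if $X \geq Y$ and $P$ is invertible, then $P^\top X P \geq P^\top Y P$. I would apply this with $P = B^{-\top}$, i.e.\ I would pre-multiply by $B^{-1}$ and post-multiply by $B^{-\top}$.

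First I would treat the left inequality of \eqref{Bpositive}, namely $\tfrac12[B + B^\top] \geq \tfrac12 \beta_\mathcal{A}^1 B B^\top$. Applying the map $M \mapsto B^{-1} M B^{-\top}$ to both sides, the products telescope because $B^{-1}B = I$ and $B^\top B^{-\top} = I$: the left-hand side becomes $\tfrac12(B^{-\top} + B^{-1})$ and the right-hand side becomes $\tfrac12 \beta_\mathcal{A}^1 I_{m+a}$. This yields the first inequality of the lemma.

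Next I would treat the right inequality of \eqref{Bpositive}, $\tfrac12 \beta_\mathcal{A}^1 B B^\top \geq \beta_\mathcal{A}^2 I_{m+a}$, and apply the same congruence. The left-hand side again collapses to $\tfrac12 \beta_\mathcal{A}^1 I_{m+a}$, while the right-hand side becomes $\beta_\mathcal{A}^2 B^{-1} B^{-\top}$, giving the second inequality. Finally, strict positivity of the last term is immediate: since $\beta_\mathcal{A}^2 > 0$ and $B^{-1} B^{-\top} = B^{-1}(B^{-1})^\top$ is positive definite (as $B^{-1}$ is invertible), we have $\beta_\mathcal{A}^2 B^{-1} B^{-\top} > 0$.

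There is no substantial obstacle here; the argument is essentially bookkeeping once the congruence map is chosen. The only point requiring care is verifying that the matrix products simplify as claimed --- in particular that the quadratic factors $B B^\top$ are exactly cancelled by the $B^{-1}$ and $B^{-\top}$ on either side --- and confirming that the single transformation $M \mapsto B^{-1} M B^{-\top}$ acts consistently on both inequalities of \eqref{Bpositive} to reproduce the stated chain.
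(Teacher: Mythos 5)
Your proposal is correct and is exactly the paper's argument: the paper's one-line proof also left- and right-multiplies \eqref{Bpositive} by $B_\mathcal{A}^{-1}(z)$ and $B_\mathcal{A}^{-\top}(z)$, and you have simply spelled out the telescoping and the preservation of the Loewner order under congruence. No issues.
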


\begin{proof}
	Left and right multiplying \eqref{Bpositive} by $ B_\mathcal{A}^{-1}(z)$ and $ B_\mathcal{A}^{-\top}(z)$, respectively, yields the desired result.
\end{proof}

The Jacobian of the active constraint vector $g_\mathcal{A}(z)$, i.e., $\frac{\partial^\top g_\mathcal{A}}{\partial z}(z)$, has dimension $n\times (m+a)$. As discussed in Remark~\ref{rem:fullRankJacobian}, Assumption~\ref{Assumption1} implies that $\frac{\partial^\top g_\mathcal{A}}{\partial z}(z)$ has full row rank for all $z\in\mathcal{G}_{ineq}$ and $\mathcal{A}\in\mathcal{S}$. For each $\mathcal{A}\in\mathcal{S}$, we define a full rank left annihilator for $\frac{\partial^\top g_\mathcal{A}}{\partial z}(z)$ denoted by $G_\mathcal{A}^\perp(z)\in\mathbb{R}^{(n-m-a)\times n}$, which satisfies 
\begin{equation}\label{gPerpDef}
	\begin{split}
		G_\mathcal{A}^\perp(z)
		\underbrace{
	\begin{bmatrix}
		\frac{\partial^\top g_{eq}}{\partial z}(z) & \frac{\partial^\top g_{ineq}}{\partial z}(z)E_\mathcal{A}^\top(z)
	\end{bmatrix}
		}_{\frac{\partial^\top g_\mathcal{A}}{\partial z}(z)}
	&=
	0_{(n-m-a)\times (m+a)}
	\end{split}
\end{equation}
for all $z\in\mathcal{G}_{ineq}$. Note that the dimension of $G_\mathcal{A}^\perp(z)$ will vary depending on the cardinality of any given $\mathcal{A}\in\mathcal{S}$.

\subsection{Subsystem dynamics}
Using the notation introduced throughout this section, the dynamics of each subsystem that forms the right-hand side vector field of \eqref{switchedSystem} is given by
\begin{equation}\label{optimisationDynamicsOriginal}
	\begin{aligned}
		&\dot z
		=
		h_\mathcal{A}(z) \\
		&=
		-\left[\kappa_1A_\mathcal{A}^{\top}(z) B_\mathcal{A}^{-1}(z)A_\mathcal{A}(z) + \kappa_2G_\mathcal{A}^{\perp\top}(z)G_\mathcal{A}^\perp(z)\right]  \nabla_z f(z) 
  \\
  &\hspace{4mm}- \kappa_1A_\mathcal{A}^{\top}(z) B_\mathcal{A}^{-1}(z)d_\mathcal{A}
	\end{aligned}
\end{equation}
all $\mathcal{A}\in\mathcal{S}$, where $A_\mathcal{A}(z)$ and $d_\mathcal{A}$ are defined in \eqref{constraintsReduced}, $ B_\mathcal{A}(z)$ is defined in \eqref{Bdefinition}, $G_\mathcal{A}^\perp(z)$ satisfies \eqref{gPerpDef}, and $\kappa_1, \kappa_2\in\mathbb{R}$ are positive  tuning parameters. By considering the definition \eqref{constraintsReduced}, the dynamics \eqref{optimisationDynamicsOriginal} can be equivalently written  as 
\begin{equation}\label{optimisationDynamicsFactored}
	\begin{split}
		\dot z
		=
		-\kappa_1 A_\mathcal{A}^{\top}(z) B_\mathcal{A}^{-1}(z)g_\mathcal{A}(z) 
		-\kappa_2 G_\mathcal{A}^{\perp\top}(z)G_\mathcal{A}^\perp(z)\nabla_z f(z).
	\end{split}
\end{equation}

The intuition behind the proposed dynamics is evident from~\eqref{optimisationDynamicsFactored}. The second term on the right side of \eqref{optimisationDynamicsFactored} is a gradient descent term on the objective function $f(z)$ in directions perpendicular to the constraints, whereas the first term on the right side of \eqref{optimisationDynamicsFactored} is used to drive the active constraint vector $g_\mathcal{A}(z)$ towards the origin using the positive definiteness of the matrix $B_\mathcal{A}(z)$ (see Assumption \ref{Assumption1}). While it is clear that this is desirable for equality constraints, it is not immediately clear why this should be required for inequality constraints. In the sequel, the switching law $\sigma(t)\in\mathcal{S}$ will be introduced to ensure that the inequality constraints are only included in the active constraint vector when they hold with equality. In that case, the role of the first term is to ensure that the inequality constraints are not violated along the forward solution of the switched system.

\subsection{Switching law}
The dynamics of each subsystem of the optimisation dynamics is given by \eqref{optimisationDynamicsOriginal}. Next, to complete the definition of the switched system, we introduce a switching law $\sigma:\left[0,\infty\right)\to\mathcal{S}$ that determines which subsystem should be integrated at any given point in time.

Before formally introducing the switching law, we provide a qualitative example that demonstrates the intuition behind the design of the switching law. Figure~\ref{switchingExample} shows an example optimisation problem, where the objective function is given by Rosenbrock's function and a single linear inequality constraint constrains the solution space. The index set is given by $\mathcal{S} = \left\lbrace\{\emptyset\}, \{1\}\right\rbrace$ and the switching law $\sigma(t)$ can take the possible values $\{1\}$ or $\{\emptyset\}$. The inequality constraint holds with equality at both points $z_a$ and $z_b$, and two possible flow vectors are available that correspond to the two subsystems. At point $z_a$, integration of the subsystem $h_{\{\emptyset\}}(z_a)$ would violate the constraint, whereas $h_{\{1\}}(z_a)$ ensures that the constraint is held with equality, so the switching law should be such that $\sigma(t) = \{1\}$. Conversely, at point $z_b$ integrating subsystem $h_{\{\emptyset\}}(z_b)$ would move away from the constraint boundary and decrease the objective function, whereas $h_{\{1\}}(z_b)$ would ensure the constraint holds with equality. In this case, the switching law should result in $\sigma(t) = \{\emptyset\}$.
\begin{figure}[ht!]
	\centering{}
	\includegraphics[width=0.90\columnwidth]{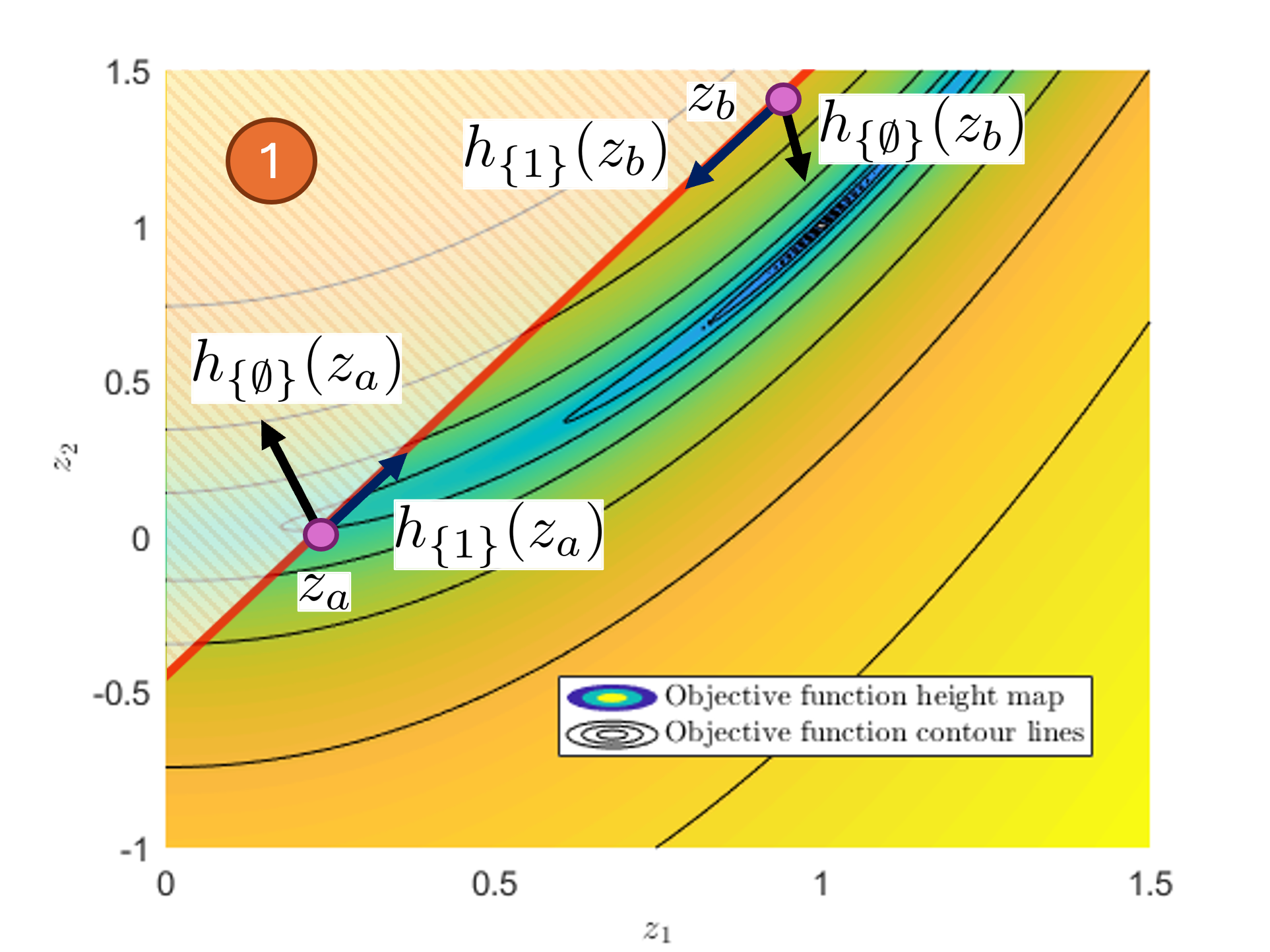}
	\caption{Height map of Rosenbrock's function with an inequality constraint. At the point $z_a$, the steepest descent direction intersects the constrained space, whereas at the point $z_b$, the steepest descent gradient does not intersect the constrained space.}
	\label{switchingExample}
\end{figure}

The switching law for the optimisation dynamics is given in Algorithm \ref{alg:switching}\footnote{{As the optimisation dynamics operate in continuous time and the solution converges asymptotically, no terminal condition of the switching law is defined.}}. For each constraint satisfying $g_{ineq,i}(z) = 0$ that is not in the active set, $i\notin\mathcal{I}(z(t))\backslash\sigma(t)$, the gradient of the constraint along the current subsystem flow is checked to determine if integration of $\dot z = h_{\sigma(t)}(z)$ would violate the constraint. If an increase is detected via the evaluation of $\dot g_{ineq,i}(t)$, the dynamics are switched such that the constraint is added to the updated set $\sigma(t^+)$, which causes the constraint to be held with equality along the forward solution. A similar algorithm is applied to remove constraints from the set. For each constraint $i$ in the set $\sigma(t)$, the gradient of the constraint along the flow that would result from switching to $\sigma(t)\backslash\left\lbrace i\right\rbrace$, is considered. If integration of $\dot z = h_{\sigma(t)\backslash\left\lbrace i\right\rbrace}(z)$ would result in a decrease in the constraint value, the switching $\sigma(t^+) = \sigma(t)\backslash\left\lbrace i\right\rbrace$ occurs. When removing a constraint, however, a minimum time $\delta T > 0$ must have elapsed since a constraint was last removed. This restriction ensures an average dwell-time for the dynamics to avoid any potential Zeno or chattering behaviours in the solution.
\begin{algorithm}
	\caption{Switching law}\label{alg:switching}
	\begin{algorithmic}
		\State \textbf{Inputs:} $t, z(t), \sigma(t)$
		\State \textbf{Outputs:} $\sigma(t^+)$
		\State \textbf{Initialisation:} $\sigma(0) = \mathcal{I}(z(0))$
		\State Check if the set $\sigma(t)$ should be updated to include an additional inequality constraint index.
		\For{$i\in\mathcal{I}(z(t))\backslash\sigma(t)$}
			\If{$\frac{\partial g_{ineq,i}}{\partial z}(z(t))h_{\sigma(t)}(z(t)) \geq 0$}
				\State $\sigma(t^+) = \sigma(t) \cup \left\lbrace i\right\rbrace$
			\EndIf
		\EndFor
		\State Check if the set $\sigma(t)$ should be updated to remove an  inequality constraint index.
		\For{$i\in\sigma(t)$}
			\If{$\frac{\partial g_{ineq,i}}{\partial z}(z(t))h_{\sigma(t)\backslash \left\lbrace i\right\rbrace}(z(t)) < 0$ \textbf{and} $t > T_s + \delta T$}
				\State $\sigma(t^+) = \sigma(t) \backslash \left\lbrace i\right\rbrace$
				\State $T_s = t$
			\EndIf
		\EndFor
	\end{algorithmic}
\end{algorithm}

\begin{remark}
It will be shown in subsequent analysis that the switching law has the effect of rendering the set $\mathcal{G}_{ineq}$ positively invariant along solutions of the switched system. With this in mind, the proposed approach shares similarities with both interior point methods and active set methods for optimisation. Interior point methods utilise barrier functions to ensure positive invariance of $\mathcal{G}_{ineq}$, whereas active set methods switch between different combinations of inequality constraints being enforced with equality to reach a solution \cite{nocedalNumericalOptimization1999}. Our proposed optimisation dynamics includes aspects from both of the methods mentioned above.  
\end{remark}

%%%%%%%%%%%%%%%%%%%%%%%%%%%%%%%%%%%%%%%%%%%%%%%%%%%%%%%%%%%%%%%%%%%%%%%%%%%%%%%%%%%%%%%%%%%%%%%%%%%%%%%%%%%%%%%%%%%%%%%%%%%%%%%%%%%%%%%%%%%%%%%%%%%%

\section{Stability analysis}\label{sec:stab_anal}
In this section, we study the stability properties of the switched system, formed by the subsystem dynamics \eqref{optimisationDynamicsOriginal} with switching law given in Algorithm~\ref{alg:switching}. The underlying principle for the stability analysis is to use the objective function $f$ as a common ISS-Lyapunov function for the switched system. Combining this property with the behaviour of the switching law results in convergence of the solution of the switched system to an equilibrium point $z^\star$ satisfying the KKT conditions~\eqref{KKTconditions}.

Before proceeding to the technical details of the proof, we first state the main result of the paper. The justification of this claim is developed throughout the remainder of this section.
\begin{theorem}\label{thm:convergence}
	Consider the switched system formed by combining the subsystem dynamics~\eqref{optimisationDynamicsOriginal} with the switching law given in Algorithm~\ref{alg:switching}. If Assumption \ref{Assumption1} is satisfied and $z(0) \in \mathcal{G}_{ineq}$, then the solution $z(t)$ of the switched system converges to a point $z^\star\in\mathbb{R}^n$ for which there exist corresponding Lagrange multipliers $\lambda^\star$ and $\nu^\star$, such that $(z^\star, \lambda^\star, \nu^\star)$ satisfies the KKT conditions \eqref{KKTconditions}.
\end{theorem}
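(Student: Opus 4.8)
The plan is to use the objective function $f$ as a common ISS-Lyapunov function across every subsystem and to exploit the structure of Algorithm~\ref{alg:switching} to pin down the limit. I would organise the argument into four stages: (i) positive invariance of $\mathcal{G}_{ineq}$, (ii) boundedness of trajectories, (iii) convergence via an invariance principle for switched systems, and (iv) verification of the KKT conditions \eqref{KKTconditions} at the limit. The cornerstone observation is that, along any subsystem \eqref{optimisationDynamicsFactored}, the active constraint vector obeys a simple linear decay: differentiating $g_\mathcal{A}$ and using $\tfrac{\partial g_\mathcal{A}}{\partial z}G_\mathcal{A}^{\perp\top}=0$ from \eqref{gPerpDef} together with $\tfrac{\partial g_\mathcal{A}}{\partial z}A_\mathcal{A}^\top=B_\mathcal{A}$ from \eqref{Bdefinition} gives
\[
  \dot g_\mathcal{A} = -\kappa_1 B_\mathcal{A} B_\mathcal{A}^{-1} g_\mathcal{A} = -\kappa_1 g_\mathcal{A}.
\]
Since the add-rule of Algorithm~\ref{alg:switching} appends an index $i$ only when $g_{ineq,i}(z)=0$, each active inequality enters $g_\mathcal{A}$ at the value zero and is frozen there by the decay law, while the equality block is always present and decays as $e^{-\kappa_1 t}g_{eq}(0)$. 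This simultaneously yields positive invariance of $\mathcal{G}_{ineq}$: an inactive constraint can only reach its boundary, and there it is either activated (and frozen at zero) or has strictly decreasing value, so no violation occurs.

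Next I would compute the evolution of $f$. Using $\dot z = h_\mathcal{A}(z)$ and $A_\mathcal{A}\nabla_z f = g_\mathcal{A}-d_\mathcal{A}$,
\[
  \dot f = -\kappa_2\|G_\mathcal{A}^\perp\nabla_z f\|^2 - \kappa_1 (g_\mathcal{A}-d_\mathcal{A})^\top B_\mathcal{A}^{-1} g_\mathcal{A}.
\]
The first term is nonpositive; by Lemma~\ref{lem:BAinequality} the quadratic part $-\kappa_1 g_\mathcal{A}^\top B_\mathcal{A}^{-1}g_\mathcal{A}$ is nonpositive, and the residual $\kappa_1 d_\mathcal{A}^\top B_\mathcal{A}^{-1}g_\mathcal{A}$ is bounded by a constant multiple of $\|g_\mathcal{A}\|$, hence integrable thanks to the exponential decay above. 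Thus $f$ acts as a common ISS-Lyapunov function with the vanishing $g_\mathcal{A}$ as input: $f$ stays bounded, and by the radial unboundedness of $f$ the trajectory remains in a compact subset of $\mathcal{G}_{ineq}$. Integrating the bound also gives $\int_0^\infty \|G_\mathcal{A}^\perp\nabla_z f\|^2\,dt<\infty$.

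For convergence I would invoke the invariance principle recalled in Section~\ref{sec:switchedSystems}. The removal dwell-time $t>T_s+\delta T$ in Algorithm~\ref{alg:switching}, together with the fact that additions occur only at constraint boundaries and that there are at most $p$ constraints, furnishes an average dwell-time of the form \eqref{averageDwellTimeDefinition}, excluding Zeno and chattering. With $f$ a common weak Lyapunov function and the trajectory precompact, solutions approach the largest weakly invariant set on which $\dot f\equiv 0$; there $g_\mathcal{A}\to 0$ and $G_\mathcal{A}^\perp\nabla_z f=0$, which forces $h_\mathcal{A}=0$, so the set consists of equilibria. I would then argue that the active set eventually freezes at some $\mathcal{A}^\star$, reducing the tail of the trajectory to a single subsystem for which $z^\star$ is the asymptotically stable equilibrium, yielding convergence to a single point with $g_{eq}(z^\star)=0$, $g_{ineq,i}(z^\star)=0$ for $i\in\mathcal{A}^\star$ (and $<0$ otherwise), and $\nabla_z f(z^\star)\in\mathrm{range}\,\tfrac{\partial^\top g_{\mathcal{A}^\star}}{\partial z}$. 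Coordinating the switched-system invariance analysis with this eventual freezing of the active set, so that convergence is genuinely to a point rather than merely to the equilibrium set, is the step I expect to be the main obstacle.

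Finally I would extract the multipliers and verify \eqref{KKTconditions}. The range condition gives $\lambda^\star$ and the active components $\tilde\nu^\star$ from $\nabla_z f(z^\star) = -\tfrac{\partial^\top g_{eq}}{\partial z}\lambda^\star - \tfrac{\partial^\top g_{ineq}}{\partial z}E_{\mathcal{A}^\star}^\top\tilde\nu^\star$; setting the inactive components of $\nu^\star$ to zero secures \eqref{KKTconditions:gradient}, \eqref{KKTconditions:gineqNegative} and complementary slackness \eqref{KKTconditions:complimentary}. The remaining condition $\nu^\star\geq 0$ is the clever ingredient: arguing by contradiction, if some active $\tilde\nu_i<0$, then evaluating the removal test of Algorithm~\ref{alg:switching} at $z^\star$ for $\mathcal{A}=\mathcal{A}^\star\setminus\{i\}$, using $g_\mathcal{A}(z^\star)=0$ so that $h_\mathcal{A}(z^\star)=-\kappa_2 G_\mathcal{A}^{\perp\top}G_\mathcal{A}^\perp\nabla_z f(z^\star)$ and $G_\mathcal{A}^\perp\nabla_z f(z^\star)=-\tilde\nu_i G_\mathcal{A}^\perp\tfrac{\partial^\top g_{ineq,i}}{\partial z}(z^\star)$, gives
\[
  \frac{\partial g_{ineq,i}}{\partial z}(z^\star)\,h_{\mathcal{A}}(z^\star) = \kappa_2\,\tilde\nu_i\,\Bigl\|G_{\mathcal{A}}^\perp\frac{\partial^\top g_{ineq,i}}{\partial z}(z^\star)\Bigr\|^2.
\]
By the linear independence of the active constraint gradients (Remark~\ref{rem:fullRankJacobian}) the norm is nonzero, so the expression is strictly negative and would trigger a removal, contradicting stationarity of $\mathcal{A}^\star$. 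Hence $\nu^\star\geq 0$ and $(z^\star,\lambda^\star,\nu^\star)$ satisfies \eqref{KKTconditions}.
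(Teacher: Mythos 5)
Your architecture and all the key computations match the paper's: the decay law $\dot g_{\mathcal{A}}=-\kappa_1 g_{\mathcal{A}}$ driving positive invariance of $\mathcal{G}_{ineq}$ and exponential convergence to $\mathcal{G}_{eq}$, the use of $f$ as a common ISS-Lyapunov function for boundedness, the average dwell-time from the removal timer plus the chatter bound $p$, the invariance principle for switched systems, and---crucially---the sign argument for $\nu^\star$ via the removal test, where your identity
\[
\frac{\partial g_{ineq,i}}{\partial z}(z^\star)\,h_{\mathcal{A}}(z^\star)=\kappa_2\,\tilde\nu_i\,\Bigl\|G_{\mathcal{A}}^\perp\tfrac{\partial^\top g_{ineq,i}}{\partial z}(z^\star)\Bigr\|^2
\]
is exactly the computation in the paper's Proposition~\ref{prop:LimitKKTconditions}. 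The one place you depart from the paper is the step you yourself flag as the main obstacle: you propose to show that the active set eventually freezes at some $\mathcal{A}^\star$ so that the tail of the trajectory is governed by a single subsystem. The paper never proves (or needs) such a freezing. Instead it applies the invariance principle of \cite[Theorem 3.6]{mancilla2011} to the dynamics restricted to $\Gamma$, obtaining convergence to the largest \emph{weakly} invariant subset of $\Omega$ in \eqref{equilibriumCondition}, and then runs your multiplier-sign argument pointwise on an arbitrary subsystem equilibrium: if some $\bar\nu_k<0$, the removal test fires, the subsequent flow strictly decreases $f$, and hence that point cannot lie in any weakly invariant set---no statement about the asymptotic behaviour of $\sigma(t)$ along the actual trajectory is required. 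This reorganisation is what closes the gap you identified, so if you keep your outline you should replace the ``eventual freezing'' step with the weakly-invariant-set characterisation (and note that the complementary direction, that KKT points are fixed by the switching law, ensures the invariant set is exactly the KKT set). Your treatment of the constraint terms in $\dot f$ as an integrable vanishing input is a harmless variant of the paper's Young's-inequality estimate, and your justification that $G_{\mathcal{A}}^\perp\tfrac{\partial^\top g_{ineq,i}}{\partial z}(z^\star)\neq 0$ via linear independence of the active gradients is a detail the paper leaves implicit but which is correct.
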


The proof of this claim follows in Section \ref{sec:mainResultProof}.

\subsection{Positive invariance of feasibility set}
Verification of the claimed convergence properties starts by establishing positive invariance of the constraint sets $\mathcal{G}_{eq}$, $\mathcal{G}_{ineq}$, and $\Gamma$ along forward solutions of the switched system. By construction, the subsystem dynamics \eqref{optimisationDynamicsFactored} are such that the active constraint vector \eqref{constraintsReduced} converges uniformly to the origin. Inspection of the switching law reveals that inequality constraint indices can only be added or removed from the set $\sigma(t)$ when they are equal to zero, which results in the desired invariance property. As the switching law operates by adding or removing indices from the set $\sigma(t)$, it must be additionally shown that the set is always contained within the index set $\mathcal{S}$, so that there always exists a corresponding subsystem. With these properties at hand, the switching law ensures that a minimum time $\delta T$ must elapse before inequality constraint indices are removed from the set $\sigma(t)$. This restriction enforces an average dwell-time that excludes undesirable behaviours from the solutions of the switched system.
\begin{proposition}\label{prop:feasibleSetPosInvar}
Consider the switched system formed by combining the subsystem dynamics \eqref{optimisationDynamicsOriginal} with the switching law given in Algorithm~\ref{alg:switching}. The following properties hold for any solution of the switched system:
\begin{enumerate}
	\item\label{prop:Existence:averageDwell} The solution has an average dwell-time of the form \eqref{averageDwellTimeDefinition} with $N_0 = p$ and $\tau_D = \frac{\delta T}{2}$.

	\item\label{prop:subsystemBehaviour:gA}  Considering the time interval between two consecutive switches $t\in\left[t_k, t_{k+1}\right)$, the active constraint vector converges to the origin at the rate
	\begin{equation}\label{gaSolution}
		g_{\sigma(t_k)}(t) = g_{\sigma(t_k)}(t_k)e^{-\kappa_1 t},
	\end{equation}
	which can be separated into equality and inequality constraints by
\begin{subequations}\label{gaSolution_separate}
\begin{align}
			g_{eq}(t) &= g_{eq}(t_k)e^{-\kappa_1 t} \label{gaSolution_separate:equality}\\
			E_{\sigma(t_k)}g_{ineq}(t) &= E_{\sigma(t_k)}g_{ineq}(t_k)e^{-\kappa_1 t}.\label{gaSolution_separate:inequality}
\end{align}
\end{subequations}

	\item\label{prop:restrictedDynamics:switching} Let $\left[0, T_L\right)$, where $T_L\leq\infty$, be a time interval on which a solution of the switched system exists. The set $\sigma(t)$ is positively invariant on $\mathcal{S}$ and a subset of the index set $\mathcal{I}(z(t))$ for all $t\in\left[0, T_L\right)$. That is,  $\sigma(t)\subseteq\mathcal{I}(z(t))\subseteq\mathcal{S} \ \forall \ t\in\left[0, T_L\right)$.
	
	\item\label{prop:restrictedDynamics:positiveInvariance}  Solutions to the switched system are positively invariant on the sets $\mathcal{G}_{eq}$, $\mathcal{G}_{ineq}$, and $\Gamma$.  
\end{enumerate}
\end{proposition}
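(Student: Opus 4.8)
The plan is to prove the four items in the order~\ref{prop:subsystemBehaviour:gA}, \ref{prop:restrictedDynamics:switching}, \ref{prop:restrictedDynamics:positiveInvariance}, \ref{prop:Existence:averageDwell}, since each leans on the ones before it. I would begin with item~\ref{prop:subsystemBehaviour:gA}, which is the algebraic engine of the whole argument. On an inter-switch interval $[t_k,t_{k+1})$ the active set $\mathcal{A}=\sigma(t_k)$ is constant, so $\dot g_\mathcal{A}=\tfrac{\partial g_\mathcal{A}}{\partial z}(z)\,\dot z$ with $\dot z$ given by \eqref{optimisationDynamicsFactored}. Substituting and using the two identities $\tfrac{\partial g_\mathcal{A}}{\partial z}(z)A_\mathcal{A}^\top(z)=B_\mathcal{A}(z)$ from \eqref{Bdefinition} and $\tfrac{\partial g_\mathcal{A}}{\partial z}(z)G_\mathcal{A}^{\perp\top}(z)=0$, which is the transpose of \eqref{gPerpDef}, the gradient-descent term is annihilated and the remaining term collapses to $-\kappa_1 B_\mathcal{A}(z)B_\mathcal{A}^{-1}(z)g_\mathcal{A}(z)=-\kappa_1 g_\mathcal{A}(z)$. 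The resulting linear ODE $\dot g_\mathcal{A}=-\kappa_1 g_\mathcal{A}$ integrates to the claimed exponential decay, and partitioning $g_\mathcal{A}$ into its equality block and its $E_\mathcal{A}$-selected inequality block (recall the block structure in \eqref{constraintsReduced}) gives \eqref{gaSolution_separate}. Assumption~\ref{Assumption1} is used only to guarantee that $B_\mathcal{A}^{-1}(z)$ exists, so this computation is well posed.

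For item~\ref{prop:restrictedDynamics:switching}, the membership $\mathcal{I}(z(t))\in\mathcal{S}$ is immediate from \eqref{setSdefinition} by taking $z=z(t)$ as the witness. The inclusion $\sigma(t)\subseteq\mathcal{I}(z(t))$ I would establish inductively across switching instants: the base case is the initialisation $\sigma(0)=\mathcal{I}(z(0))$; the addition branch of Algorithm~\ref{alg:switching} inserts only indices taken from $\mathcal{I}(z(t))\backslash\sigma(t)$, i.e.\ indices at which the constraint already vanishes; and item~\ref{prop:subsystemBehaviour:gA}, via \eqref{gaSolution_separate:inequality}, keeps every inequality index already present in $\sigma$ pinned at $g_{ineq,i}=0$, so it stays in $\mathcal{I}(z(t))$ until it is explicitly removed. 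Since $\sigma(t)\subseteq\mathcal{I}(z(t))\in\mathcal{S}$ and $\mathcal{S}$ is closed under taking subsets by \eqref{setSdefinition}, it follows that $\sigma(t)\in\mathcal{S}$, so a corresponding subsystem $h_{\sigma(t)}$ always exists.

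Item~\ref{prop:restrictedDynamics:positiveInvariance} is the crux. Invariance of $\mathcal{G}_{eq}$ follows from \eqref{gaSolution_separate:equality}: the equality block is always active, so $g_{eq}(0)=0$ yields $g_{eq}(t)=0$ for all $t$. For $\mathcal{G}_{ineq}$ I would argue by contradiction using a first-crossing time $t^\star$ at which some $g_{ineq,i}$ reaches zero from below. While $i\in\sigma(t)$, \eqref{gaSolution_separate:inequality} holds $g_{ineq,i}$ at zero, so a crossing can only be attempted with $i\notin\sigma(t^\star)$; but then $i\in\mathcal{I}(z(t^\star))\backslash\sigma(t^\star)$ together with $\tfrac{\partial g_{ineq,i}}{\partial z}h_{\sigma(t^\star)}\geq 0$ triggers the addition branch of Algorithm~\ref{alg:switching}, placing $i$ in $\sigma(t^{\star+})$ and forcing $g_{ineq,i}$ to remain at zero thereafter, contradicting the crossing. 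Hence $g_{ineq}(t)\leq 0$ is preserved and $\mathcal{G}_{ineq}$ is positively invariant; invariance of $\Gamma=\mathcal{G}_{ineq}\cap\mathcal{G}_{eq}$ then follows at once. This argument is self-consistent with the fact that the dynamics are only defined on $\mathcal{G}_{ineq}$, since the solution never leaves that set.

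Finally, item~\ref{prop:Existence:averageDwell} I would obtain by counting additions and removals separately over an interval of length $L=t-\tau$. Removals reset $T_s$ and require $t>T_s+\delta T$, so consecutive removals are at least $\delta T$ apart, bounding their number by $L/\delta T$ up to a constant; since each addition strictly increases $|\sigma|$, each removal decreases it by one, and $0\le|\sigma|\le p$, the number of additions exceeds the number of removals by at most the net growth, which is at most $p$. Summing gives a bound of the form $p+2L/\delta T$, i.e.\ \eqref{averageDwellTimeDefinition} with $N_0=p$ and $\tau_D=\delta T/2$, the factor two reflecting that each removal can be charged with at most one extra addition. The main obstacle I anticipate is making the $\mathcal{G}_{ineq}$-invariance argument of item~\ref{prop:restrictedDynamics:positiveInvariance} fully rigorous in the Carath\'eodory/switched-solution setting: one must ensure the addition switch is triggered exactly at the boundary and rule out the constraint being crossed on a measure-zero set between switches, which is precisely where the directional-derivative test in Algorithm~\ref{alg:switching} and the dwell-time structure of item~\ref{prop:Existence:averageDwell} must be carefully combined.
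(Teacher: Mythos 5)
Your proposal is correct and follows essentially the same route as the paper: the same annihilator/$B_\mathcal{A}B_\mathcal{A}^{-1}$ computation yielding $\dot g_\mathcal{A}=-\kappa_1 g_\mathcal{A}$ for item~2, the same induction over switching instants for item~3, the same first-crossing/contradiction argument (the paper's ``Case 3'') for the invariance of $\mathcal{G}_{ineq}$ in item~4, and the same add/remove accounting for the dwell-time bound in item~1. Your explicit counting of additions versus removals via the bound $0\le|\sigma|\le p$ is a slightly more careful packaging of the paper's ``at most two switches per $\delta T$'' remark, but it is not a different method.
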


\begin{proof}
Consider Claim \ref{prop:Existence:averageDwell} and note that the switching law in Algorithm \ref{alg:switching} has no restriction on how quickly inequality constraint indices can be added to the active set. As there are $p$ inequality constraints, the switching law has a chatter bound of $N_0 = p$. By definition, inequality indices can only be removed from the active set after a time period of $\delta T$ has elapsed since the last index was removed. There is no time-based restriction on how quickly a constraint index can be re-added to the active set, so in a time period of $\delta T$, there can be at most two switches. This maximal switching rate is described by the average dwell-time bound $\tau_D = \frac{\delta T}{2}$.

To verify claim \ref{prop:subsystemBehaviour:gA}, we take the time derivative of the vector $g_{\sigma(t_k)}(z)$ along the dynamics \eqref{optimisationDynamicsFactored}, replacing $\mathcal{A}$ with ${\sigma(t_k)}$. The resulting constraint dynamics are given by
		\begin{equation}
  \label{constrait_dynamics}
			\begin{split}
				\dot g_{\sigma(t_k)}(z)
				=&
				\frac{\partial g_{\sigma(t_k)}}{\partial z}(z)\dot z_{\sigma(t_k)} \\
				=&
				-\kappa_1\frac{\partial g_{\sigma(t_k)}}{\partial z}(z) A_{\sigma(t_k)}^{\top}(z) B_{\sigma(t_k)}^{-1}(z)g_{\sigma(t_k)}(z) \\
				&-\kappa_2\frac{\partial g_{\sigma(t_k)}}{\partial z}(z) G_{\sigma(t_k)}^{\perp\top}(z)G_{\sigma(t_k)}^\perp(z)\nabla_z f(z)
				 \\
				=&
				-\kappa_1 g_{\sigma(t_k)}(z),
			\end{split}
		\end{equation}
		where we have used the definitions of $G_\mathcal{A}^\perp(z)$ from \eqref{gPerpDef} and $ B_\mathcal{A}(z)$ from \eqref{Bdefinition}, replacing $\mathcal{A}$ with ${\sigma(t_k)}$. Then, \eqref{constrait_dynamics} can be decomposed as
		\begin{equation*}
			\begin{split}
				\dot g_{eq}(z)
				&=
				-\kappa_1 g_{eq}(z) \\
				E_{\sigma(t_k)}\dot g_{ineq}(z)
				&=
				-\kappa_1 E_{\sigma(t_k)}g_{ineq}(z),
			\end{split}
		\end{equation*}
		whose solutions yield \eqref{gaSolution_separate}.
		
Now we consider Claim \ref{prop:restrictedDynamics:switching}: Assume that at some time $t_k$, we have $\sigma(t_k)\subseteq\mathcal{I}(z(t_k))$ and $E_{\sigma(t_k)}g_{ineq}(t_k) = 0_{a\times 1}$. Then, from Claim~\ref{prop:subsystemBehaviour:gA}, we have that $E_{\sigma(t_k)}g_{ineq}(t) = E_{\sigma(t_k)}g_{ineq}(t_k)e^{-\kappa_1 t}$, which implies that  $E_{\sigma(t_k)}g_{ineq}(t) = 0_{a\times 1}$ for all $t\in\left[t_{k},t_{k+1}\right)$. We now consider the behaviour of constraints under the addition and removal of constraint indices from the active set at time $t_{k+1}$. 
	\begin{itemize}
		\item \textbf{Case 1 - An index is added to the active set:} In the case that an index $i$ is added to the active set, it should satisfy $i\in\mathcal{I}(z(t_{k+1}))\backslash\sigma(t_{k+1})$, which implies that $\sigma(t_{k+1}^+) = \sigma(t_{k+1}) \cup \left\lbrace i\right\rbrace\subseteq\mathcal{I}(z(t_{k+1}))$. \\
		\item \textbf{Case 2 - An index is removed from the active set:} As $\sigma(t_{k+1})\subseteq\mathcal{I}(z(t_{k+1}))$, we have that $\sigma(t_{k+1}^+) = \sigma(t_{k+1})\backslash\left\lbrace i\right\rbrace\subset\mathcal{I}(z(t_{k+1}))$.
	\end{itemize}
	As the switched signal is initialised with $\sigma(0)=\mathcal{I}(z(0))$ and $\mathcal{I}(z)\in\mathcal{S}$ for all $z$, the claim follows by induction.

	Now we consider Claim~\ref{prop:restrictedDynamics:positiveInvariance}: Considering the set $\mathcal{G}_{eq}$, the equality constraints evolve according to $g_{eq}(t) = g_{eq}(t_k)e^{-\kappa_1 t}$ for all subsystems by Claim~\ref{prop:subsystemBehaviour:gA}. Therefore, if $z(0)\in\mathcal{G}_{eq}$ and $g_{eq}(t) = 0_{m\times 1}$ for all $t\in\left[0, T_L\right)$, then $\mathcal{G}_{eq}$ is positively invariant.	Consideration of the inequality constraints is more complicated due to the switching law. We need to show that the set $\mathcal{G}_{ineq}$ is preserved under the addition and removal of indices from the active set and that the set cannot be violated by any subsystem dynamics. To this end, consider that at some $t_k$, we have $z(t_k)\in\mathcal{G}_{ineq}$.
	\begin{itemize}
		\item \textbf{Case 1 - An index is added to the active set:} In the case that an index $i$ is added to the active set, it should satisfy $i\in\mathcal{I}(z(t_k))\backslash\sigma(t_k)$, which implies that $g_{ineq,i}(z(t_k)) = 0$. It follows from Claim~\ref{prop:subsystemBehaviour:gA} that $E_{\sigma(t_k)\cap\left\lbrace i\right\rbrace}g_{ineq}(z(t)) = 0_{a\times 1}$ for all $t\in\left[t_k, t_{k+1}\right)$ by \eqref{gaSolution_separate:inequality}. 
		
		\item \textbf{Case 2 - An index is removed from the active set:} For an index to be removed from the active set, it must satisfy $\frac{\partial g_{ineq,i}}{\partial z}(z)h_{\sigma(t_k)\backslash \left\lbrace i\right\rbrace}(z) < 0$. Therefore, after a switch event, the constraint $g_{ineq,i}(t)$ strictly decreases, which is consistent with the constraint inequality. From \eqref{gaSolution_separate:inequality}, the remaining inequality constraints satisfy $E_{\sigma(t)\backslash\left\lbrace i\right\rbrace}g_{ineq}(z(t)) = 0_{a\times 1}$ for all $t\in\left[t_{k}, t_{k+1}\right)$. 
		
		\item \textbf{Case 3 - A constraint is violated along the subsystem dynamics:} Now consider the possibility that an inequality constraint $g_{ineq,i}$ with $i\notin\sigma(t_k)$ is violated along the dynamics of subsystem $\sigma(t_k)$ with consecutive switching time $t_k, t_{k+1}$. For the sake of contradiction, assume that $g_{ineq,i}(t_{k+1}) > 0$. By continuity of $z(t)$ and the fact that $z(t_k)\in\mathcal{G}_{ineq}$, there must exist a time $T\in\left[t_k, t_{k+1}\right]$ such that $g_{ineq,i}(z(T)) = 0$ and $\frac{\partial g_{ineq,i}}{\partial z}(z(T))h_{\sigma(T)}z(T) \geq 0$. Such a point, however, would trigger a switching event implying that there is a switching event between $t_k$ and $t_{k+1}$.
	\end{itemize}
	It follows by induction that $\mathcal{G}_{ineq}$ is positively invariant. Finally, $\Gamma$ is positively invariant due to the positive invariance of $\mathcal{G}_{eq}$ and $\mathcal{G}_{ineq}$.
\end{proof}

\subsection{Existence, uniqueness, and boundedness of solutions}
In Proposition~\ref{prop:feasibleSetPosInvar}, it was verified that the sets $\mathcal{G}_{eq}$, $\mathcal{G}_{ineq}$, and $\Gamma$ are positively invariant along any forward solution to the switched system. So far, however, we have not yet verified that the solution of the system will exist for all time or it will be unique. In this section, we establish that the solutions of the switched system are bounded, which implies that they exist and are unique. We first show that the solutions of each subsystem between switching instants are bounded. As the Carath\'eodory solution to the switched system is the composition of the solutions of the individual subsystems, it follows that the full solution is similarly bounded.

Before verifying the solutions of each subsystem between switching instants are bounded, we require a technical lemma. The following lemma establishes that the matrix formed by concatenating $A_\mathcal{A}(z)$ and $G_{\mathcal{A}}^\perp(z)$ is full rank. This property is used in subsequent computations to ensure that each subsystem has some damping.
\begin{lemma}\label{lem:AGmatrix}
	Under Assumption \ref{Assumption1}, the matrix
	\begin{equation}\label{AGmatrix}
		\begin{bmatrix}
			A_\mathcal{A}(z) \\ G_\mathcal{A}^\perp(z)
		\end{bmatrix}
		\in\mathbb{R}^{n\times n}
	\end{equation}
	is full rank for all $z\in\mathcal{G}_{ineq}$ and $\mathcal{A}\in\mathcal{S}$. Consequently, there exists a $\phi_\mathcal{A} > 0$ such that
	\begin{equation}\label{AG_matrixSquared}
		\begin{bmatrix}
			A_\mathcal{A}(z) \\ G_\mathcal{A}^\perp(z)
		\end{bmatrix}^\top
		\begin{bmatrix}
			A_\mathcal{A}(z) \\ G_\mathcal{A}^\perp(z)
		\end{bmatrix}
%		=
%		A_\mathcal{A}^\top(z)A_\mathcal{A}(z) + G_\mathcal{A}^{\perp\top}(z)G_\mathcal{A}^\perp(z)
		>
		\phi_\mathcal{A} I
	\end{equation}
	on the same domain.
\end{lemma}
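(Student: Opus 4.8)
The plan is to prove the two claims separately: first the pointwise full-rank property of \eqref{AGmatrix}, and then the uniform quadratic lower bound \eqref{AG_matrixSquared}. For the full-rank claim it is cleaner to argue on the transpose $\begin{bmatrix} A_\mathcal{A}^\top(z) & G_\mathcal{A}^{\perp\top}(z)\end{bmatrix}$, which is full rank if and only if the stacked matrix is. I would show its kernel is trivial by exploiting the two structural facts already available: that $G_\mathcal{A}^\perp(z)$ annihilates $\frac{\partial^\top g_\mathcal{A}}{\partial z}(z)$ by \eqref{gPerpDef}, and that $B_\mathcal{A}(z)=\frac{\partial g_\mathcal{A}}{\partial z}(z)A_\mathcal{A}^\top(z)$ is invertible by Assumption~\ref{Assumption1}.

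Concretely, suppose $A_\mathcal{A}^\top(z)u + G_\mathcal{A}^{\perp\top}(z)w = 0$ for some $u\in\mathbb{R}^{m+a}$ and $w\in\mathbb{R}^{n-m-a}$. Left-multiplying by the Jacobian $\frac{\partial g_\mathcal{A}}{\partial z}(z)$, the second term drops out because $\frac{\partial g_\mathcal{A}}{\partial z}(z)G_\mathcal{A}^{\perp\top}(z) = \left(G_\mathcal{A}^\perp(z)\frac{\partial^\top g_\mathcal{A}}{\partial z}(z)\right)^\top = 0$ by \eqref{gPerpDef}, leaving $B_\mathcal{A}(z)u = 0$. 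Invertibility of $B_\mathcal{A}(z)$ forces $u=0$, whence $G_\mathcal{A}^{\perp\top}(z)w = 0$ forces $w=0$ since $G_\mathcal{A}^\perp(z)$ has full row rank by construction. Thus the kernel is trivial and \eqref{AGmatrix} is full rank for every $z\in\mathcal{G}_{ineq}$ and $\mathcal{A}\in\mathcal{S}$.

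For the quadratic bound, writing $M_\mathcal{A}(z)$ for \eqref{AGmatrix}, the full-rank property gives $M_\mathcal{A}^\top(z)M_\mathcal{A}(z) = A_\mathcal{A}^\top A_\mathcal{A} + G_\mathcal{A}^{\perp\top}G_\mathcal{A}^\perp > 0$ at each fixed $z$, so a pointwise constant always exists; the substantive step is to make $\phi_\mathcal{A}$ independent of $z$ over the possibly unbounded set $\mathcal{G}_{ineq}$. I would quantify the bound by decomposing a unit vector $v$ along the orthogonal splitting $\mathbb{R}^n = \mathrm{Row}\!\left(\tfrac{\partial g_\mathcal{A}}{\partial z}\right)\oplus\mathrm{Row}\!\left(G_\mathcal{A}^\perp\right)$ furnished by \eqref{gPerpDef}, controlling the $\mathrm{Row}(G_\mathcal{A}^\perp)$ component directly through $\|G_\mathcal{A}^\perp v\|$ (normalising the annihilator so that $G_\mathcal{A}^\perp(z)G_\mathcal{A}^{\perp\top}(z) = I_{n-m-a}$ makes this block a genuine orthogonal projection), and controlling the component $v = \frac{\partial^\top g_\mathcal{A}}{\partial z}\xi$ through the identity $A_\mathcal{A} v = B_\mathcal{A}^\top\xi$, whose norm satisfies $\|A_\mathcal{A}v\| \geq \sigma_{\min}(B_\mathcal{A}^\top)\|\xi\|$ with $\sigma_{\min}(B_\mathcal{A}^\top)\geq\sqrt{2\beta_\mathcal{A}^2/\beta_\mathcal{A}^1}>0$ uniformly, by the constants of \eqref{Bpositive} (equivalently Lemma~\ref{lem:BAinequality}).

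I expect this last uniformity to be the main obstacle. While $\sigma_{\min}(B_\mathcal{A}^\top)$ is bounded below uniformly by Assumption~\ref{Assumption1}, converting $\|\xi\|$ back into $\|v\|$ introduces the factor $\sigma_{\max}\!\left(\frac{\partial g_\mathcal{A}}{\partial z}\right)$, which is not a priori bounded on an unbounded $\mathcal{G}_{ineq}$ and could in principle let the smallest singular value of $M_\mathcal{A}(z)$ vanish as $\|z\|\to\infty$. Closing this gap is exactly where the uniform (as opposed to merely pointwise) positive-definiteness encoded in \eqref{Bpositive} must be combined with the regularity of $A_\mathcal{A}(z)$ and the normalised $G_\mathcal{A}^\perp(z)$ as functions of $z$; once the two block estimates are assembled and the cross terms handled via the orthogonality of the splitting, they collapse into a single constant $\phi_\mathcal{A}>0$ valid on all of $\mathcal{G}_{ineq}$.
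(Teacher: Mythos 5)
Your proof of the first claim is correct and is, in substance, identical to the paper's: the paper runs the same argument on left null vectors of \eqref{AGmatrix} (assume $v_1^\top A_\mathcal{A} + v_2^\top G_\mathcal{A}^\perp = 0$, right-multiply by $\frac{\partial^\top g_\mathcal{A}}{\partial z}$ to kill the annihilator term, use invertibility of $B_\mathcal{A}$ to get $v_1 = 0$, then full row rank of $G_\mathcal{A}^\perp$ to get $v_2 = 0$), whereas you run it on the kernel of the transpose. These are the same proof.

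For the second claim, you have correctly identified a real issue, but you should know that the paper does not resolve it either: its entire argument for \eqref{AG_matrixSquared} is the sentence that the bound ``follows from \eqref{AGmatrix} being full rank on the closed domain $\mathcal{G}_{ineq}$.'' Closedness without compactness does not yield a uniform constant, and your worry is not hypothetical: with $\frac{\partial g_\mathcal{A}}{\partial z} = (1,0)$, $G_\mathcal{A}^\perp = (0,1)$ and $A_\mathcal{A}(z) = (\epsilon(z), c(z))$, Assumption~\ref{Assumption1} pins $\epsilon(z)$ into a fixed interval $[\sqrt{2\beta^2_\mathcal{A}/\beta^1_\mathcal{A}},\, 2/\beta^1_\mathcal{A}]$ but places no bound on $c(z)$, and the smallest eigenvalue of $M_\mathcal{A}^\top M_\mathcal{A}$ behaves like $\epsilon^2/(\epsilon^2 + c^2 + 1)$, which tends to $0$ as $c\to\infty$. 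So a uniform $\phi_\mathcal{A}$ genuinely requires something beyond Assumption~\ref{Assumption1} and \eqref{gPerpDef}: either an added boundedness/regularity hypothesis on $A_\mathcal{A}$ and the normalised $G_\mathcal{A}^\perp$ over $\mathcal{G}_{ineq}$, or a restriction of the statement to a compact subset (continuity of the smallest singular value then gives the constant at once; note the lemma is ultimately applied on the compact invariant sublevel set $\mathcal{F}0$, although $\phi_\mathcal{A}$ also enters the definition of $\mathcal{F}_\mathcal{A}$ in \eqref{FsetDefinitions}, which is used before compactness is established). In short: your first half matches the paper; your second half is left open, but so is the paper's, and your diagnosis of where the difficulty lies is more careful than the published argument. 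Your auxiliary estimate $\sigma_{\min}(B_\mathcal{A}^\top)\geq\sqrt{2\beta^2_\mathcal{A}/\beta^1_\mathcal{A}}$ from \eqref{Bpositive} is correct as far as it goes.
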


\begin{proof}
	First, we verify that \eqref{AGmatrix} is full rank. For the sake of contradiction, assume that \eqref{AGmatrix} is not invertible. This implies that there the exist some non-zero vector $\left[v_1^\top \ v_2^\top\right]\in\mathbb{R}^n$ with $v_1\in\mathbb{R}^{m+a}, v_2\in\mathbb{R}^{n-m-a}$ such that
	\begin{equation}\label{rankProof1}
		\begin{bmatrix}
			v_1^\top & v_2^\top
		\end{bmatrix}
		\begin{bmatrix}
			A_\mathcal{A}(z) \\ G_\mathcal{A}^\perp(z)
		\end{bmatrix}
		=
		v_1^\top A_\mathcal{A}(z) + v_2^\top G_\mathcal{A}^\perp(z) = 0_{1\times n}.
	\end{equation}
	Right multiplying the above equation by $\frac{\partial^\top g_\mathcal{A}}{\partial z}(z)$ removes the second term, resulting in
	\begin{equation*}
		v_1^\top A_\mathcal{A}(z)\frac{\partial^\top g_\mathcal{A}}{\partial z}(z) = 0_{1\times (m+a)}.
	\end{equation*}
	By Assumption \ref{Assumption1}, this can only be true if $v_1 = 0_{(m+a)\times 1}$. Considering \eqref{rankProof1}, $v_1 = 0_{(m+a)\times 1}$ implies that we must have $v_2$ zero as $G_\mathcal{A}^\perp(z)$ has full row rank. This leads to a contradiction. Therefore, we conclude that \eqref{AGmatrix} is invertible. The identity \eqref{AG_matrixSquared} follows from \eqref{AGmatrix} being full rank on the closed domain $\mathcal{G}_{ineq}$.
\end{proof}

Boundedness of the solution to each subsystem can now be established. To provide some intuition of the proof, consider the subsystem dynamics~\eqref{optimisationDynamicsOriginal}. Notice that the first term is a gradient descent with respect to the objective function, and note that the second term appears as a disturbance related to the constant vector $d_\mathcal{A}$. Our approach to verify that the solution of each subsystem is bounded is to use the objective function as a candidate ISS-Lyapunov function. This results in a positively invariant sub-level set of $f$ in which the solution must be contained.

\begin{proposition}\label{prop:subsystemBehaviour}
	Consider the subsystem dynamics \eqref{optimisationDynamicsOriginal} on the interval $\left[t_k, t_{k+1}\right)$, where $t_k$ and $t_{k+1}$ are any consecutive switching instants such that $0 \leq t_k \leq t_{k+1} \leq \infty$. If $z(t_k)\in\mathcal{G}_{ineq}$, the solutions of the subsystem \eqref{optimisationDynamicsOriginal} are positively invariant on any sub-level set of $f$ containing
	\begin{equation}\label{FsetDefinitions}
		\begin{split}
			\mathcal{F}_\mathcal{A}
			&=
			\left\lbrace
				z\in\mathbb{R}^n \ \bigg| \right. \\
				&
				\left. 
				||\nabla_z f(z)||^2 \leq \frac{\kappa_1}{\beta_{\mathcal{A}}^2\phi_\mathcal{A}\min\left\lbrace\frac12\kappa_1\beta_{\mathcal{A}}^1, \kappa_2\right\rbrace}
				||d_\mathcal{A}||^2
			\right\rbrace,
		\end{split}
	\end{equation}
	where $d_\mathcal{A}$ is defined in \eqref{constraintsReduced}, $\beta_{\mathcal{A}}^1$ and $\beta_{\mathcal{A}}^2$ in \eqref{Bpositive},  $\phi_\mathcal{A}$ in \eqref{AG_matrixSquared}, and $t\in\left[t_k, t_{k+1}\right)$.
\end{proposition}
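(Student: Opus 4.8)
The plan is to use the objective function $f$ as a common ISS-Lyapunov function for the subsystem \eqref{optimisationDynamicsOriginal} and to show that $\dot f$ is strictly negative whenever $z\notin\mathcal{F}_\mathcal{A}$; since $f$ is radially unbounded, this forces every solution that starts in a sub-level set of $f$ containing $\mathcal{F}_\mathcal{A}$ to remain there, which is exactly the asserted positive invariance. Throughout, positive invariance of $\mathcal{G}_{ineq}$ from Proposition~\ref{prop:feasibleSetPosInvar} keeps the trajectory in the region where Assumption~\ref{Assumption1} and Lemmas~\ref{lem:BAinequality} and \ref{lem:AGmatrix} are available.

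First I would differentiate $f$ along the factored dynamics \eqref{optimisationDynamicsFactored}. Writing $u := A_\mathcal{A}(z)\nabla_z f$ and $w := G_\mathcal{A}^\perp(z)\nabla_z f$, and using $g_\mathcal{A}(z)=u+d_\mathcal{A}$ from \eqref{constraintsReduced}, a direct computation gives $\dot f = \nabla_z^\top f\, h_\mathcal{A}(z) = -\kappa_1 u^\top B_\mathcal{A}^{-1}(z) u - \kappa_2\|w\|^2 - \kappa_1 u^\top B_\mathcal{A}^{-1}(z) d_\mathcal{A}$. The first two terms are the damping and the last is a disturbance-like cross term driven by the constant vector $d_\mathcal{A}$. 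Next I would bound the damping from below. Since $u^\top B_\mathcal{A}^{-1}u = \tfrac12 u^\top(B_\mathcal{A}^{-1}+B_\mathcal{A}^{-\top})u$, Lemma~\ref{lem:BAinequality} supplies both $u^\top B_\mathcal{A}^{-1}u \ge \tfrac12\beta_\mathcal{A}^1\|u\|^2$ and $u^\top B_\mathcal{A}^{-1}u\ge\beta_\mathcal{A}^2\|B_\mathcal{A}^{-\top}u\|^2$. I would use the first to retain a genuine $\|u\|^2$ penalty and reserve a portion of the second to dominate the cross term: bounding $-\kappa_1 u^\top B_\mathcal{A}^{-1}d_\mathcal{A}\le\kappa_1\|B_\mathcal{A}^{-\top}u\|\,\|d_\mathcal{A}\|$ and applying Young's inequality absorbs this term into the $\|B_\mathcal{A}^{-\top}u\|^2$ part of the damping, leaving a residual proportional to $\|d_\mathcal{A}\|^2$. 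The outcome is an estimate of the form $\dot f \le -\min\{\tfrac12\kappa_1\beta_\mathcal{A}^1,\kappa_2\}(\|u\|^2+\|w\|^2) + c_\mathcal{A}\|d_\mathcal{A}\|^2$ for an explicit constant $c_\mathcal{A}$.

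Finally, I would invoke Lemma~\ref{lem:AGmatrix}: inequality \eqref{AG_matrixSquared} gives $\|u\|^2+\|w\|^2 = \nabla_z^\top f\,(A_\mathcal{A}^\top A_\mathcal{A}+G_\mathcal{A}^{\perp\top}G_\mathcal{A}^\perp)\nabla_z f\ge\phi_\mathcal{A}\|\nabla_z f\|^2$, converting the damping into a penalty on $\|\nabla_z f\|^2$. Collecting constants shows $\dot f<0$ precisely when $\|\nabla_z f\|^2$ exceeds the threshold defining $\mathcal{F}_\mathcal{A}$ in \eqref{FsetDefinitions}. Consequently, on the boundary of any sub-level set of $f$ that contains $\mathcal{F}_\mathcal{A}$ one has $\dot f\le 0$, so that sub-level set is forward invariant on $[t_k,t_{k+1})$; radial unboundedness of $f$, together with radial unboundedness of $\|\nabla_z f\|$ (which makes $\mathcal{F}_\mathcal{A}$ bounded), guarantees that such a compact sub-level set exists, and boundedness of the subsystem solution follows.

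The main obstacle is the cross-term bookkeeping: the disturbance term $-\kappa_1 u^\top B_\mathcal{A}^{-1}d_\mathcal{A}$ must be dominated using only the damping that Lemma~\ref{lem:BAinequality} certifies, and the weights in Young's inequality have to be chosen so that enough of the $\|u\|^2$ penalty survives to pair with the $\kappa_2\|w\|^2$ term under the common factor $\min\{\tfrac12\kappa_1\beta_\mathcal{A}^1,\kappa_2\}$; this balance is what pins down the exact constant appearing in \eqref{FsetDefinitions}. A secondary, more routine subtlety is the standard argument that a sign condition on $\dot f$ along the boundary of a compact sub-level set indeed yields forward invariance of that set.
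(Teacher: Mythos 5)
Your proposal is correct and follows essentially the same route as the paper: differentiate $f$ along the subsystem dynamics, symmetrise $B_\mathcal{A}^{-1}$, split the damping so that Lemma~\ref{lem:BAinequality} yields both a $\|A_\mathcal{A}\nabla_z f\|^2$ penalty and a $\|B_\mathcal{A}^{-\top}A_\mathcal{A}\nabla_z f\|^2$ reserve that absorbs the Young's-inequality cross term with $d_\mathcal{A}$, then apply Lemma~\ref{lem:AGmatrix} to obtain a $\phi_\mathcal{A}\|\nabla_z f\|^2$ damping rate and invoke radial unboundedness of $f$ and $\|\nabla_z f\|$. The only cosmetic difference is in the intermediate constant (the paper ends up with $\min\{\tfrac14\kappa_1\beta_\mathcal{A}^1,\kappa_2\}$ after the split, which still yields a threshold below that of \eqref{FsetDefinitions}), a bookkeeping point you already flag.
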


\begin{proof}
	Consider the objective function $f$ as a candidate ISS Lyapunov function. Taking the time derivative of $f$ along the trajectories of \eqref{optimisationDynamicsOriginal} results in
	\begin{equation}
 \label{eq:time_derv_f}
		\begin{split}
			\dot f
%			=&
%			-\kappa_1\nabla_z^\top f A_\mathcal{A}^{\top} B_\mathcal{A}^{-1}A_\mathcal{A}\nabla_z f 
%			-\kappa_2\nabla_z^\top fG_\mathcal{A}^{\perp\top}G_\mathcal{A}^\perp\nabla_z f \\
%			&-\kappa_1\nabla_z^\top f A_\mathcal{A}^{\top} B_\mathcal{A}^{-1}d_\mathcal{A} \\
			\leq&
			-\frac12\kappa_1\nabla_z^\top fA_\mathcal{A}^{\top}\left\lbrace B_\mathcal{A}^{-1} +  B_\mathcal{A}^{-\top}\right\rbrace A_\mathcal{A}\nabla_z f \\
			&-\kappa_2\nabla_z^\top f G_\mathcal{A}^{\perp\top}G_\mathcal{A}^\perp\nabla_z f \\
			&+\frac{1}{2\epsilon}\kappa_1\nabla_z^\top f A_\mathcal{A}^{\top} B_\mathcal{A}^{-1} B_\mathcal{A}^{-\top}A_\mathcal{A}\nabla_z f
			+ \frac{\epsilon}{2}\kappa_1d_\mathcal{A}^\top d_\mathcal{A},
		\end{split}
	\end{equation}
	where $\epsilon$ is a constant from the application of Young's inequality, and the arguments of the functions have been dropped for brevity. From Proposition~\ref{prop:feasibleSetPosInvar}.\ref{prop:restrictedDynamics:positiveInvariance}, the set $\mathcal{G}_{ineq}$ is positively invariant, which implies that Lemma~\ref{lem:BAinequality} holds on the considered time interval. Then, applying Lemma~\ref{lem:BAinequality} to the first-term of \eqref{eq:time_derv_f} yields
	\begin{equation*}
		\begin{split}
			\dot f
			\leq&
			-\frac14\kappa_1\nabla_z^\top fA_\mathcal{A}^{\top}\left\lbrace B_\mathcal{A}^{-1} +  B_\mathcal{A}^{-\top}\right\rbrace A_\mathcal{A}\nabla_z f \\
			&-\frac12\kappa_1\left[\beta_{\mathcal{A}}^2 - \frac{1}{\epsilon}\right]\nabla_z^\top fA_\mathcal{A}^{\top} B_\mathcal{A}^{-1} B_\mathcal{A}^{-\top}(z)A_\mathcal{A}\nabla_z f \\
			&-\kappa_2\nabla_z^\top f G_\mathcal{A}^{\perp\top}G_\mathcal{A}^\perp\nabla_z f 
			+ \frac{\epsilon}{2}\kappa_1d_\mathcal{A}^\top d_\mathcal{A}.
		\end{split}
	\end{equation*}
	Setting $\epsilon = (\beta_{\mathcal{A}}^2)^{-1}$ and again applying Lemma \ref{lem:BAinequality} results in
	\begin{equation*}
		\begin{split}
			\dot f
			\leq&
			-\frac14\kappa_1\nabla_z^\top fA_\mathcal{A}^{\top}\left\lbrace B_\mathcal{A}^{-1} +  B_\mathcal{A}^{-\top}\right\rbrace A_\mathcal{A}\nabla_z f \\
			&-\kappa_2\nabla_z^\top f G_\mathcal{A}^{\perp\top}G_\mathcal{A}^\perp\nabla_z f
			+ \frac12(\beta_{\mathcal{A}}^2)^{-1}\kappa_1d_\mathcal{A}^\top d_\mathcal{A} \\
%			\leq&
%			-\frac14\kappa_1\beta_{\mathcal{A}}^1\nabla_z^\top fA_\mathcal{A}^{\top}A_\mathcal{A}\nabla_z f \\
%			&-\kappa_2\nabla_z^\top f G_\mathcal{A}^{\perp\top}G_\mathcal{A}^\perp\nabla_z f
%			+ \frac12(\beta_{\mathcal{A}}^2)^{-1}\kappa_1d_\mathcal{A}^\top d_\mathcal{A} \\
			\leq&
			-\min\left\lbrace\frac14\kappa_1\beta_{\mathcal{A}}^1, \kappa_2\right\rbrace
			\nabla_z^\top f
			\begin{bmatrix}
				A_\mathcal{A}^{\top} & G_\mathcal{A}^{\perp\top}
			\end{bmatrix}
			\begin{bmatrix}
				A_\mathcal{A} \\ G_\mathcal{A}^{\perp}
			\end{bmatrix} \\
			&\times\nabla_z f(z)
			+ \frac12(\beta_{\mathcal{A}}^2)^{-1}\kappa_1d_\mathcal{A}^\top d_\mathcal{A} \\
			\leq&
			-\min\left\lbrace\frac14\kappa_1\beta_{\mathcal{A}}^1, \kappa_2\right\rbrace
			\phi_\mathcal{A}
			||\nabla_z f||^2
			+ \frac12(\beta_{\mathcal{A}}^2)^{-1}\kappa_1||d_\mathcal{A}||^2,
		\end{split}
	\end{equation*}
	where Lemma \ref{lem:AGmatrix} has been used in the last line.
	It follows that $\dot f(z)$ is strictly decreasing for any $z$ satisfying
	\begin{equation*}
		\begin{split}
			||\nabla_z f(z)||^2
			>
			\frac{\kappa_1}{2\beta_{\mathcal{A}}^2\phi_\mathcal{A}\min\left\lbrace\frac14\kappa_1\beta_{\mathcal{A}}^1, \kappa_2\right\rbrace}||d_\mathcal{A}||^2.
		\end{split}
	\end{equation*}
	As $f(z)$ and $||\nabla_z f(z)||$ are radially unbounded, any sub-level set of $f(z)$ that contains $\mathcal{F}_\mathcal{A}$ is positively invariant along the solutions of \eqref{optimisationDynamicsOriginal} on the time interval $\left[t_k, t_{k+1}\right)$.
\end{proof}

It has now been established that the solution of any given subsystem between switching instants must be contained within a sub-level set of $f$ by Proposition~\ref{prop:subsystemBehaviour}. Our attention now turns to verifying that the solution to the switched system is similarly bounded. This property follows from the fact that $f$ is used as a common ISS-Lyapunov function for all subsystems. Therefore, there exists a sub-level set of $f$ that contains $\mathcal{F}_\mathcal{A}$, defined in \eqref{FsetDefinitions}, for all $\mathcal{A}\in\mathcal{S}$. The existence of such a set implies that any forward solution to the switched system is bounded. A consequence of boundedness of the solution is that the solution is both unique and exists for all time. 

\begin{proposition}\label{prop:existence_uniqueness}
	Consider the switched system formed by combining the subsystem dynamics~\eqref{optimisationDynamicsOriginal} with the switching law given in Algorithm \ref{alg:switching}. If $z(0)\in\mathcal{G}_{ineq}$, then the solution of the switched system satisfies the following properties:
	\begin{enumerate}
	\item\label{prop:Existence:invariantSet} The forward solution is positively invariant on any sub-level set of $f$ that contains
\begin{equation}\label{FsetDefinitions2}
		\begin{split}
			&\mathcal{F}
			=
			\left\lbrace
				z\in\mathbb{R}^n \ \bigg| \right. \\
				&
				\left. 
				||\nabla_z f(z)||^2 \leq \frac{\kappa_1}{\underline{\beta_2}\underline{\phi}\min\left\lbrace\frac12\kappa_1\underline{\beta_1}, \kappa_2\right\rbrace}
				\bigg|\bigg|
				\begin{bmatrix}
					d_{eq} \\
					d_{ineq}
				\end{bmatrix}
				\bigg|\bigg|^2
			\right\rbrace,
		\end{split}
	\end{equation}
	where $\underline{\phi} = \min_{\mathcal{A}\in\mathcal{S}}\phi_{\mathcal{A}}$, $\underline{\beta_1} = \min_{\mathcal{A}\in\mathcal{S}}\beta_{\mathcal{A}}^1$, and $\underline{\beta_2} = \min_{\mathcal{A}\in\mathcal{S}}\beta_{\mathcal{A}}^2$ are the minimum values over all possible sets $\mathcal{A}\in\mathcal{S}$.
	
	\item\label{prop:restrictedDynamics:positiveInvarianceOf_F0} Defining $\mathcal{F}0$ to be the smallest sublevel set of $f$ containing $\mathcal{F}$ and $z(0)$, the set $\mathcal{F}0$ is positively invariant.
	
	\item\label{prop:Existence:existenceUniqueness} There exists a unique, absolutely continuous, and bounded solution of the switched system on the interval $t\in\left[0,\infty\right)$.
	\end{enumerate}
\end{proposition}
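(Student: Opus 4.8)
The plan is to lift the per-subsystem invariant sets furnished by Proposition~\ref{prop:subsystemBehaviour} to a single sub-level set of $f$ that is invariant under \emph{every} subsystem simultaneously. Since the Carath\'eodory solution of the switched system is a concatenation of subsystem solutions and the state is continuous across switching instants, such a common invariant sub-level set must trap the entire switched trajectory, from which boundedness, existence, and uniqueness will all follow.

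For Claim~\ref{prop:Existence:invariantSet}, I would first note that $\mathcal{S}$ is a finite index set, being a subset of the power set of the finite set $\mathcal{P}$, so the minima $\underline{\phi}$, $\underline{\beta_1}$, $\underline{\beta_2}$ are attained and strictly positive by Assumption~\ref{Assumption1} and Lemma~\ref{lem:AGmatrix}. The crux is the inclusion $\mathcal{F}_\mathcal{A}\subseteq\mathcal{F}$ for every $\mathcal{A}\in\mathcal{S}$, comparing \eqref{FsetDefinitions} with \eqref{FsetDefinitions2}. This rests on two uniform bounds. First, because $E_\mathcal{A}$ merely selects rows of $d_{ineq}$, one has $||d_\mathcal{A}||^2 = ||d_{eq}||^2 + ||E_\mathcal{A}d_{ineq}||^2 \le ||d_{eq}||^2 + ||d_{ineq}||^2$, so the numerator in \eqref{FsetDefinitions} is dominated by that in \eqref{FsetDefinitions2}. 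Second, replacing $\phi_\mathcal{A},\beta_\mathcal{A}^1,\beta_\mathcal{A}^2$ by their minima over $\mathcal{S}$ only shrinks the denominator, since $\beta_\mathcal{A}^2\ge\underline{\beta_2}$, $\phi_\mathcal{A}\ge\underline{\phi}$, and $\min\{\frac12\kappa_1\beta_\mathcal{A}^1,\kappa_2\}\ge\min\{\frac12\kappa_1\underline{\beta_1},\kappa_2\}$. Both effects enlarge the right-hand side of the defining inequality, yielding $\mathcal{F}_\mathcal{A}\subseteq\mathcal{F}$. Consequently, any sub-level set of $f$ containing $\mathcal{F}$ contains each $\mathcal{F}_\mathcal{A}$; Proposition~\ref{prop:subsystemBehaviour} then renders that sub-level set invariant under every subsystem, the hypotheses being met because $\mathcal{G}_{ineq}$ is positively invariant by Proposition~\ref{prop:feasibleSetPosInvar}.\ref{prop:restrictedDynamics:positiveInvariance}. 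As only the active vector field changes at a switch while $z(t)$ remains continuous, the concatenated trajectory never leaves this common sub-level set.

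Claim~\ref{prop:restrictedDynamics:positiveInvarianceOf_F0} is then immediate: by construction $\mathcal{F}0$ is a sub-level set of $f$ containing $\mathcal{F}$, so Claim~\ref{prop:Existence:invariantSet} applies verbatim to give its positive invariance, and since $z(0)\in\mathcal{F}0$ by construction the whole forward solution stays in $\mathcal{F}0$. For Claim~\ref{prop:Existence:existenceUniqueness}, boundedness is inherited from Claim~\ref{prop:restrictedDynamics:positiveInvarianceOf_F0}, because $f$ is radially unbounded and hence each of its sub-level sets, in particular $\mathcal{F}0$, is compact. On every inter-switch interval the field $h_\mathcal{A}$ is continuous and locally Lipschitz on $\mathcal{G}_{ineq}$, so a unique solution exists there; the average dwell-time bound from Proposition~\ref{prop:feasibleSetPosInvar}.\ref{prop:Existence:averageDwell} ensures only finitely many switches occur in any finite interval, excluding Zeno accumulation. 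Concatenating these unique pieces, together with the fact that boundedness precludes finite-time escape, produces a unique, absolutely continuous solution on $[0,\infty)$. I expect the main obstacle to be the uniformity in Claim~\ref{prop:Existence:invariantSet}: one must confirm that the minima defining $\mathcal{F}$ are strictly positive and, more delicately, that passing from the per-subsystem sets to the single common set genuinely preserves invariance through switching instants, which is exactly where the continuity of $z(t)$ at switches and the finiteness of $\mathcal{S}$ are essential.
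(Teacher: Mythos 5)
Your proposal is correct and follows essentially the same route as the paper: establish $\mathcal{F}_\mathcal{A}\subseteq\mathcal{F}$ via the bound $\|d_\mathcal{A}\|\leq\|[d_{eq}^\top\ d_{ineq}^\top]^\top\|$ and the minimisation of the constants over the finite set $\mathcal{S}$, invoke Proposition~\ref{prop:subsystemBehaviour} to get a common invariant sub-level set, and deduce boundedness, existence, and uniqueness from compactness, Lipschitz continuity on $\mathcal{F}0$, and the average dwell-time. The only cosmetic difference is that you argue existence and uniqueness by direct concatenation of inter-switch solutions, whereas the paper cites an existing result for switched systems with average dwell-time; both are valid.
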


\begin{proof}
	First, we verify Claim~\ref{prop:Existence:invariantSet}, which ensures the existence of a positively invariant compact set that contains the forward solution of the switched system. Noting that
	\begin{equation*}
		||d_\mathcal{A}||
		\leq
		\bigg|\bigg|
		\begin{bmatrix}
			d_{eq} \\
			d_{ineq}
		\end{bmatrix}
		\bigg|\bigg|,
	\end{equation*}
	for all $\mathcal{A}\in\mathcal{S}$, it follows that $\mathcal{F}_\mathcal{A}\subseteq\mathcal{F}$ for all $\mathcal{A}$. By Proposition~\ref{prop:subsystemBehaviour}, the solution to all subsystems is positively invariant on any sublevel set of $f$ containing $\mathcal{F}$, which implies that the solution to the switched system is similarly invariant on and sublevel set of $f$ containing $\mathcal{F}$.
	
	Claim~\ref{prop:restrictedDynamics:positiveInvarianceOf_F0} follows by noting that $\mathcal{F}\subseteq\mathcal{F}0$.
	
	To verify Claim \ref{prop:Existence:existenceUniqueness}, note that as the forward solution is positively invariant on $\mathcal{F}0$, each subsystem $\dot z = h_\mathcal{A}(z)$ is Lipschitz on the same domain. Combining this observation with the existence of an average dwell-time, we conclude from \cite[Theorem 1, Remark 2]{liyingExistenceUniquenessSolutions2012} that the solution to the switched system exists and is unique for all time.
\end{proof}

\subsection{Asymptotic behaviour}
So far, it has been established that the solution to the switched system exists, is unique, and is bounded. It has additionally been verified that the feasibility sets $\mathcal{G}_{eq}$, $\mathcal{G}_{ineq}$, $\Gamma$ are positively invariant. Our attention now turns to considering the asymptotic behaviour of the solution to ensure that it converges to a point satisfying the KKT conditions. 

We first restrict our analysis to only considering solutions contained within the feasible set $\Gamma$. The reason for restricting our analysis to this set is that the objective function $f$ can be verified as a common Lyapunov function for all subsystems on this set. We exploit this property to then characterise the limiting behaviour of the switched dynamics. Once the behaviour of the dynamics restricted to this set has been establish, continuity of the solutions is used to relate the restricted solutions to those of the full domain of interest.

The following proposition establishes that the solutions of the switched system restricted to the set $\Gamma$ converge to an invariant set that is characterised by the subsystem dynamics. When solutions are restricted to this set, the objective function $f$ qualifies as a common Lyapunov function and an invariance principle can be applied to obtain the desired result.
\begin{proposition}\label{prop:restrictedDynamics}
	Consider the switched system formed by combining the subsystem dynamics \eqref{optimisationDynamicsOriginal} with the switching law in Algorithm~\ref{alg:switching}. Recalling that $\mathcal{F}0$ is the smallest sublevel set of $f$ containing $\mathcal{F}$, defined in \eqref{FsetDefinitions2}, and $z(0)$, the following claims hold:
	\begin{enumerate}
		\item\label{prop:restrictedDynamics:commonLyapunov} On the set $\Gamma$, $f$ is a common Lyapunov function to all subsystems and satisfies
	 	\begin{equation}\label{fDerivativeRestricted}
	 		\dot f|_{\Gamma}
	 		=
	 		-\kappa_2 \nabla_z^\top f(z)G_\mathcal{A}^{\perp\top}(z)G_\mathcal{A}^\perp(z)\nabla_z f(z)
	 		\leq
	 		0
	 	\end{equation}
	 	for all $\mathcal{A}\in\mathcal{S}$. 
	 	\item\label{prop:restrictedDynamics:invariantSet} A solution of the switched system with $z(0)\in\Gamma$ converges to the largest weakly invariant set contained within
			\begin{equation}\label{equilibriumCondition}
				\begin{split}
					\Omega
					=
					\mathcal{F}0 \ \cap \ &\left\lbrace z\in\Gamma \ | \ \exists \mathcal{A}\in\mathcal{S} \ \text{with} \right.\\ &\left.G_{\mathcal{A}}^\perp(z)\nabla_z f(z) = 0_{(n-m-a)\times 1} \right\rbrace \subseteq\Gamma.
				\end{split}
			\end{equation}
	\end{enumerate}
\end{proposition}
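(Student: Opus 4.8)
The plan is to treat the two claims sequentially, since Claim~\ref{prop:restrictedDynamics:commonLyapunov} furnishes the common Lyapunov function that drives the invariance argument in Claim~\ref{prop:restrictedDynamics:invariantSet}. The whole argument hinges on the simple but crucial fact that the active constraint vector vanishes identically on $\Gamma$.

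For Claim~\ref{prop:restrictedDynamics:commonLyapunov}, I would first observe that any $z\in\Gamma$ satisfies $g_{eq}(z)=0_{m\times 1}$ by definition of $\mathcal{G}_{eq}$, and that by Proposition~\ref{prop:feasibleSetPosInvar}.\ref{prop:restrictedDynamics:switching} the active set obeys $\sigma(t)\subseteq\mathcal{I}(z(t))$, so every index carried in $\mathcal{A}$ corresponds to an inequality constraint holding with equality; hence $E_\mathcal{A}g_{ineq}(z)=0_{a\times 1}$ and therefore $g_\mathcal{A}(z)=0_{(m+a)\times 1}$ for each admissible active set. Substituting $g_\mathcal{A}(z)=0$ into the factored dynamics \eqref{optimisationDynamicsFactored} annihilates the first term, leaving $\dot z|_\Gamma=-\kappa_2 G_\mathcal{A}^{\perp\top}(z)G_\mathcal{A}^\perp(z)\nabla_z f(z)$. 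Computing $\dot f=\nabla_z^\top f(z)\dot z$ then reproduces \eqref{fDerivativeRestricted} directly, and since the right-hand side is a negative semidefinite quadratic form in $\nabla_z f$, we get $\dot f|_\Gamma\leq 0$. This establishes that $f$ is a common weak Lyapunov function on $\Gamma$.

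For Claim~\ref{prop:restrictedDynamics:invariantSet}, I would assemble the ingredients required by a LaSalle-type invariance principle for switched systems. First, since $z(0)\in\Gamma$ and $\Gamma$ is positively invariant by Proposition~\ref{prop:feasibleSetPosInvar}.\ref{prop:restrictedDynamics:positiveInvariance}, the trajectory remains in $\Gamma$, so Claim~\ref{prop:restrictedDynamics:commonLyapunov} applies for all $t\geq 0$. Second, by Proposition~\ref{prop:existence_uniqueness} the solution is unique, complete, and confined to the compact sublevel set $\mathcal{F}0$, while by Proposition~\ref{prop:feasibleSetPosInvar}.\ref{prop:Existence:averageDwell} the switching signal satisfies an average dwell-time bound that excludes Zeno and chattering behaviour. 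With a common weak Lyapunov function, a compact positively invariant set, and bounded switching in hand, the invariance principle for switched systems (\cite{mancilla2011}, cf.~\cite{Bacciotti2005}) guarantees convergence to the largest weakly invariant set contained in $\mathcal{F}0\cap\{z\in\Gamma\,|\,\dot f=0\}$. Finally, from \eqref{fDerivativeRestricted} the condition $\dot f=0$ is equivalent to $G_\mathcal{A}^\perp(z)\nabla_z f(z)=0_{(n-m-a)\times 1}$ for the active set $\mathcal{A}$, which reproduces the set $\Omega$ in \eqref{equilibriumCondition}; the inclusion $\Omega\subseteq\Gamma$ is immediate from its definition.

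The main obstacle I anticipate is the careful invocation of the switched-systems invariance principle rather than any single computation. One must match the hypotheses of the cited theorem to the present setting: confirming that \emph{weak} invariance (accounting for possible non-uniqueness across subsystems) is the correct notion, that the dwell-time and compactness assumptions there coincide with those delivered by Propositions~\ref{prop:feasibleSetPosInvar} and~\ref{prop:existence_uniqueness}, and---most delicately---interpreting the existential quantifier over $\mathcal{A}\in\mathcal{S}$ in \eqref{equilibriumCondition}, since the limiting active set need not be unique and the characterisation of $\{\dot f=0\}$ must be read relative to whichever subsystem is active along the limit trajectory.
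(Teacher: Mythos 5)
Your proposal is correct and follows essentially the same route as the paper: both restrict the dynamics to $\Gamma$ by noting $g_\mathcal{A}(z)=0$ there, compute $\dot f$ to obtain \eqref{fDerivativeRestricted}, and then invoke the switched-systems invariance principle of \cite{mancilla2011} (the paper cites Theorem~3.6 there, with the required $\mathbf{L}$ property supplied by the average dwell-time via Lemma~3.3). Your explicit appeal to $\sigma(t)\subseteq\mathcal{I}(z(t))$ to justify $E_\mathcal{A}g_{ineq}(z)=0_{a\times 1}$ is a slightly more careful rendering of a step the paper leaves implicit.
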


\begin{proof}
	For Claim \ref{prop:restrictedDynamics:commonLyapunov}, to verify that $f$ is a common Lyapunov function for all subsystems on the set $\Gamma$, notice that the subsystem dynamics \eqref{optimisationDynamicsFactored} restricted to the set $\Gamma$, are given by
	\begin{equation}\label{optimisationDynamicsRestricted}
		\begin{split}
			\dot z|_{\Gamma}
			=
			h_\mathcal{A}|_{\Gamma}(z)
			=&
			-\kappa_2 G_\mathcal{A}^{\perp\top}(z)G_\mathcal{A}^\perp(z)\nabla_z f(z)
		\end{split}
	\end{equation}
	for all $\mathcal{A}\in\mathcal{S}$.
	The time derivative of $f$ evaluated along the subsystem dynamics \eqref{optimisationDynamicsRestricted} is given by \eqref{fDerivativeRestricted}. As this expression is non-positive for all subsystems, $f(z)$ is a common Lyapunov function.
	
	To prove Claim \ref{prop:restrictedDynamics:invariantSet}, we utilize the fact that $f$ is a common Lyapunov for solutions on $\Gamma$ and invoke invariance. The invariance principle for switched systems \cite[Theorem 3.6]{mancilla-aguilarInvariancePrinciplesSwitched2011} implies that the restricted dynamics converge to the largest weakly invariant set contained in \eqref{equilibriumCondition}. Note that \cite[Theorem 3.6]{mancilla-aguilarInvariancePrinciplesSwitched2011} requires the so-called $\mathbf{L}$ property, which is ensured by the average dwell-time condition verified in Proposition~\ref{prop:feasibleSetPosInvar}.\ref{prop:Existence:averageDwell} by \cite[Lemma 3.3]{mancilla-aguilarInvariancePrinciplesSwitched2011}. From Proposition~\ref{prop:feasibleSetPosInvar}.\ref{prop:restrictedDynamics:positiveInvariance}, the set $\Gamma$ is positively invariant, so $\Omega$ must be contained within $\Gamma$.
\end{proof}

It has now been established that solutions to the switched system starting in $\Gamma$ must asymptotically approach the set $\Omega$, defined in \eqref{equilibriumCondition}. We now begin the task of relating the elements of this set with the KKT conditions \eqref{KKTconditions}. As the set $\Omega$ is formed by considering the invariant sets of all possible subsystems, we start by inspecting the equilibrium points of each subsystem. The following proposition shows that for any equilibrium of any given subsystem, the corresponding active constraint vector must be equal to zero. Furthermore, there exist corresponding Lagrange multipliers that can be used to relate the equilibrium points to the KKT conditions.

\begin{proposition}\label{prop:equilibriumCharacterisation}
	A point $\bar z\in\mathbb{R}^n$ is an equilibrium of the subsystem \eqref{optimisationDynamicsFactored} if and only if it satisfies 
	\begin{equation}\label{equilibriumCharaterisation}
		\begin{split}
			g_\mathcal{A}(\bar z) &= 0_{(m+a)\times 1} \\
			G_\mathcal{A}^\perp(\bar z)\nabla_z f(\bar z) &= 0_{(n-m-a)\times 1}.
		\end{split}
	\end{equation}
	Furthermore, for any $\bar z\in\mathbb{R}^n$, there exist vectors $\bar\lambda\in\mathbb{R}^m$ and $\bar\nu\in\mathbb{R}^a$ such that
	\begin{equation}\label{gradientExpression}
		\nabla f(\bar z)
				+ \frac{\partial^\top g_{eq}}{\partial z}(\bar z)\bar\lambda
  + \frac{\partial^\top g_{ineq}}{\partial z}(\bar z)E^\top_\mathcal{A}\bar\nu
				= 0_{n\times 1}.
	\end{equation} 
\end{proposition}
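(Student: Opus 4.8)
The plan is to prove the equivalence and the multiplier statement separately, in both cases exploiting that the two terms of the vector field \eqref{optimisationDynamicsFactored} live in complementary subspaces determined by $A_\mathcal{A}$ and $G_\mathcal{A}^\perp$. The reverse implication of the equivalence is immediate: if \eqref{equilibriumCharaterisation} holds, then $g_\mathcal{A}(\bar z)=0$ annihilates the first term of \eqref{optimisationDynamicsFactored} and $G_\mathcal{A}^\perp(\bar z)\nabla_z f(\bar z)=0$ annihilates the second, so $h_\mathcal{A}(\bar z)=0$.

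For the forward implication I would left-multiply $h_\mathcal{A}(\bar z)=0$ by the Jacobian $\frac{\partial g_\mathcal{A}}{\partial z}(\bar z)$. Two cancellations do the work: $\frac{\partial g_\mathcal{A}}{\partial z}A_\mathcal{A}^\top=B_\mathcal{A}$ by definition \eqref{Bdefinition}, and $\frac{\partial g_\mathcal{A}}{\partial z}G_\mathcal{A}^{\perp\top}=0$ as the transpose of the annihilator identity \eqref{gPerpDef}. The first collapses the first term to $-\kappa_1 B_\mathcal{A}B_\mathcal{A}^{-1}g_\mathcal{A}=-\kappa_1 g_\mathcal{A}$, while the second removes the gradient-descent term entirely, leaving $\kappa_1 g_\mathcal{A}(\bar z)=0$, hence $g_\mathcal{A}(\bar z)=0$. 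Substituting this back into $h_\mathcal{A}(\bar z)=0$ leaves $\kappa_2 G_\mathcal{A}^{\perp\top}G_\mathcal{A}^\perp\nabla_z f=0$; pairing on the left with $\nabla_z^\top f$ gives $\kappa_2\|G_\mathcal{A}^\perp(\bar z)\nabla_z f(\bar z)\|^2=0$ and therefore $G_\mathcal{A}^\perp(\bar z)\nabla_z f(\bar z)=0$, which completes \eqref{equilibriumCharaterisation}.

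For the multiplier statement I would write \eqref{gradientExpression} compactly as $\nabla f(\bar z)+\frac{\partial^\top g_\mathcal{A}}{\partial z}(\bar z)\mu=0$ with $\mu=[\bar\lambda^\top\ \bar\nu^\top]^\top$, and construct $\mu$ explicitly. Since Assumption \ref{Assumption1} forces $\frac{\partial g_\mathcal{A}}{\partial z}(\bar z)$ to have full row rank (Remark \ref{rem:fullRankJacobian}), the Gram matrix $\frac{\partial g_\mathcal{A}}{\partial z}\frac{\partial^\top g_\mathcal{A}}{\partial z}$ is invertible, and I would take $\mu=-(\frac{\partial g_\mathcal{A}}{\partial z}\frac{\partial^\top g_\mathcal{A}}{\partial z})^{-1}\frac{\partial g_\mathcal{A}}{\partial z}\nabla f$. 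Substituting yields the residual $(I-P)\nabla f$, where $P=\frac{\partial^\top g_\mathcal{A}}{\partial z}(\frac{\partial g_\mathcal{A}}{\partial z}\frac{\partial^\top g_\mathcal{A}}{\partial z})^{-1}\frac{\partial g_\mathcal{A}}{\partial z}$ is the orthogonal projector onto $\mathrm{range}(\frac{\partial^\top g_\mathcal{A}}{\partial z})$.

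The crux, and the step I expect to be the main obstacle, is showing this residual vanishes and pinning down exactly which points $\bar z$ make it vanish. By a dimension count the row space of $G_\mathcal{A}^\perp$ is exactly $\ker(\frac{\partial g_\mathcal{A}}{\partial z})=\mathrm{range}(\frac{\partial^\top g_\mathcal{A}}{\partial z})^\perp$, so $\mathrm{range}(\frac{\partial^\top g_\mathcal{A}}{\partial z})=\ker(G_\mathcal{A}^\perp)$ and $(I-P)\nabla f$ is precisely the component of $\nabla f$ orthogonal to $\ker(G_\mathcal{A}^\perp)$. This component vanishes exactly when $G_\mathcal{A}^\perp(\bar z)\nabla_z f(\bar z)=0$, which is one of the equilibrium conditions \eqref{equilibriumCharaterisation}; I would therefore invoke that condition (i.e.\ restrict to equilibria $\bar z$, or equivalently to the set where $\nabla f(\bar z)\in\mathrm{range}(\frac{\partial^\top g_\mathcal{A}}{\partial z}(\bar z))$) to conclude that the constructed $\mu$ indeed satisfies \eqref{gradientExpression}. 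Care is needed precisely here, since the multiplier equation is not solvable at an arbitrary point but only where $\nabla f(\bar z)$ lies in this range.
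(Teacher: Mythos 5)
Your proof is correct and follows essentially the same route as the paper's: the forward implication uses the identical two left-multiplications (first by $\frac{\partial g_\mathcal{A}}{\partial z}(\bar z)$ to collapse the first term to $-\kappa_1 g_\mathcal{A}$ and annihilate the second, then by $\nabla_z^\top f(\bar z)$ to extract $\|G_\mathcal{A}^\perp \nabla_z f\|^2=0$), and the multiplier construction differs only cosmetically --- you build $\mu$ explicitly via the orthogonal projector onto $\mathrm{range}\bigl(\frac{\partial^\top g_\mathcal{A}}{\partial z}\bigr)$, whereas the paper expands $\nabla f$ in the complementary row spaces of $\frac{\partial^\top g_\mathcal{A}}{\partial z}$ and $G_\mathcal{A}^{\perp\top}$ and then shows the $G_\mathcal{A}^{\perp\top}$-component vanishes; both hinge on the same identity $G_\mathcal{A}^\perp(\bar z)\nabla_z f(\bar z)=0$. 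Your closing caveat is well taken: the phrase ``for any $\bar z\in\mathbb{R}^n$'' in the statement is too strong, and the paper's own proof in fact invokes \eqref{equilibriumCharaterisation} to conclude $\bar\mu=0$, so the multiplier claim holds only at equilibria, exactly as you restrict.
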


\begin{proof}
		Let us first assume that there exists a point $\bar z$ satisfying \eqref{equilibriumCharaterisation}. Substituting such a point into \eqref{optimisationDynamicsFactored} verifies that $\bar z$ is an equilibrium. Now, considering the converse, assume that a point $\bar z$ is an equilibrium of the dynamics \eqref{optimisationDynamicsFactored}. Recalling that $G_\mathcal{A}^\perp(z)\frac{\partial^\top g_\mathcal{A}}{\partial z}(z) = 0_{(n-m-a)\times (m+a)}$ and left multiplying \eqref{optimisationDynamicsFactored} by $\frac{\partial g_\mathcal{A}}{\partial z}(\bar z)$ results in
		\begin{equation*}
			\begin{split}
				-\kappa_1 \underbrace{\frac{\partial g_\mathcal{A}}{\partial z}(\bar z)A_\mathcal{A}^{\top}(\bar z)}_{B_\mathcal{A}(\bar z)} B_\mathcal{A}^{-1}(\bar z)g_\mathcal{A}(\bar z)
				=
				-\kappa_1 g_\mathcal{A}(\bar z) 
				= 
				0_{(m+a)\times 1},
			\end{split}
		\end{equation*}
		which implies that $g_\mathcal{A}(\bar z) = 0_{(m+a)\times 1}$. Substituting this result back into \eqref{optimisationDynamicsFactored} and left multiplying by $\nabla_z^\top f(\bar z)$ results in
		\begin{equation*}
			\begin{split}
				-\kappa_2\nabla_z^\top f(\bar z)G_\mathcal{A}^{\perp\top}(\bar z)G_\mathcal{A}^\perp(\bar z)\nabla_z f(\bar z)
				&=
				0,
			\end{split}
		\end{equation*}
		which recovers the conditions \eqref{equilibriumCharaterisation}.

		To verify that there exist vectors $\bar\lambda, \bar\nu$ that satisfy \eqref{gradientExpression}, recall that as $G_\mathcal{A}^\perp$ is full rank left annihilator of $\frac{\partial^\perp g_\mathcal{A}}{\partial z}$. The implication of this fact is that the rows of $G_\mathcal{A}^\perp$ and $\frac{\partial^\top g_\mathcal{A}}{\partial z}$ span $\mathbb{R}^n$, which means that there exist vectors $\bar{\lambda}\in\mathbb{R}^m$, $\bar\nu\in\mathbb{R}^a$, and $\bar\mu\in\mathbb{R}^{n-m-a}$, such that
		\begin{equation}\label{df_decomposition}
			\nabla f(\bar z)
			=
			-
			\frac{\partial^\top g_\mathcal{A}}{\partial z}(\bar z)
			\begin{bmatrix}
				\bar\lambda \\
				\bar\nu
			\end{bmatrix}
			-
			G_\mathcal{A}^{\perp\top}(\bar z)
			\bar\mu.
		\end{equation}
		Left multiplying by $G_\mathcal{A}^{\perp}(\bar z)$ results in
		\begin{equation*}
			G_\mathcal{A}^{\perp}(\bar z)
			\nabla f(\bar z)
			=
			-
			G_\mathcal{A}^{\perp}(\bar z)
			G_\mathcal{A}^{\perp\top}(\bar z)
			\bar\mu
			=
			0_{(n-m-a)\times 1}
		\end{equation*}
		by \eqref{equilibriumCharaterisation}, which implies that $\bar\mu = 0_{(n-m-a)\times 1}$. Substituting this result into \eqref{df_decomposition} recovers \eqref{gradientExpression}. Using the definition of $g_\mathcal{A}(z)$ in \eqref{constraintsReduced}, it can be further verified that
		\begin{equation*}
			\begin{bmatrix}
				\bar\lambda \\
				\bar\nu
			\end{bmatrix}
			=
			B_\mathcal{A}^{-\top}(\bar z)
			d_\mathcal{A}.
		\end{equation*}
\end{proof}

Notice that the equilibrium condition for each subsystem \eqref{gradientExpression} is close to satisfying the first-order KKT conditions \eqref{KKTconditions}, but may not satisfy condition \eqref{KKTconditions:nuPositive}. That is, each element of the resulting $\bar\nu$ may not necessarily be positive. This issue is addressed in the following proposition by showing that subsystem equilibrium points that do not satisfy the KKT conditions cannot be invariant under the chosen switching law. Conversely, it is also shown that subsystem equilibrium points that do satisfy the KKT conditions are invariant under the switching law. Combining these two properties provides a characterisation of the attractive set $\Omega$.

\begin{proposition}\label{prop:LimitKKTconditions}
	The largest weakly invariant set contained within $\Omega$, defined in \eqref{equilibriumCondition}, contains only points that satisfy the KKT conditions \eqref{KKTconditions}.
\end{proposition}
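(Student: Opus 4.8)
The plan is to take an arbitrary point $\bar z$ in the largest weakly invariant set $W$ contained in $\Omega$ and to exhibit Lagrange multipliers $(\lambda^\star,\nu^\star)$ certifying that $\bar z$ satisfies \eqref{KKTconditions}. First I would argue that any complete trajectory of the switched system that remains in $W$ must be stationary. Indeed, $W\subseteq\Omega\subseteq\mathcal{F}0\cap\Gamma$, so by Proposition~\ref{prop:restrictedDynamics}.\ref{prop:restrictedDynamics:commonLyapunov} the objective $f$ is a common weak Lyapunov function along such a trajectory and is constant on the weakly invariant set; consequently $\dot f|_\Gamma=-\kappa_2\|G_{\sigma(t)}^\perp(z)\nabla_z f\|^2\equiv 0$, which forces $G_{\sigma(t)}^\perp(z(t))\nabla_z f(z(t))=0$ and hence, by \eqref{optimisationDynamicsRestricted}, $\dot z\equiv 0$. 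Thus $z(t)=\bar z$ for all $t$, and $\bar z$ is an equilibrium of every subsystem the trajectory visits. Applying Proposition~\ref{prop:equilibriumCharacterisation} to such an active set $\mathcal{A}\in\mathcal{S}$ then yields $g_\mathcal{A}(\bar z)=0$, $G_\mathcal{A}^\perp(\bar z)\nabla_z f(\bar z)=0$, and multipliers $\bar\lambda\in\mathbb{R}^m$, $\bar\nu\in\mathbb{R}^a$ obeying \eqref{gradientExpression}.

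Next I would propose the candidate KKT multipliers $\lambda^\star=\bar\lambda$ and $\nu^\star=E_\mathcal{A}^\top\bar\nu$ (zero-padding the inactive inequality multipliers) and verify every condition in \eqref{KKTconditions} except the sign requirement \eqref{KKTconditions:nuPositive}. The stationarity condition \eqref{KKTconditions:gradient} is precisely \eqref{gradientExpression}; primal feasibility \eqref{KKTconditions:gineqNegative} together with $g_{eq}(\bar z)=0$ follows from $\bar z\in\Gamma$ via Proposition~\ref{prop:feasibleSetPosInvar}.\ref{prop:restrictedDynamics:positiveInvariance}; and complementary slackness \eqref{KKTconditions:complimentary} holds because $g_\mathcal{A}(\bar z)=0$ makes $g_{ineq,i}(\bar z)=0$ for $i\in\mathcal{A}$, while $\nu^\star_i=0$ for $i\notin\mathcal{A}$.

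The heart of the proof, and the step I expect to be the main obstacle, is establishing $\bar\nu\geq 0$, which I would prove by contradiction using the removal branch of Algorithm~\ref{alg:switching}. Fixing $i\in\mathcal{A}$ and writing $\mathcal{B}=\mathcal{A}\backslash\{i\}\in\mathcal{S}$, I would use $g_\mathcal{A}(\bar z)=0$ (so the first term of \eqref{optimisationDynamicsFactored} vanishes in mode $\mathcal{B}$), the decomposition \eqref{gradientExpression}, and the annihilation property \eqref{gPerpDef} of $G_\mathcal{B}^\perp$ to reduce $G_\mathcal{B}^\perp(\bar z)\nabla_z f(\bar z)=-\bar\nu_i\,G_\mathcal{B}^\perp(\bar z)\nabla g_{ineq,i}(\bar z)$, and thereby derive the key identity
\begin{equation*}
\frac{\partial g_{ineq,i}}{\partial z}(\bar z)\,h_{\mathcal{B}}(\bar z)
=
\kappa_2\,\bar\nu_i\,\big\|G_{\mathcal{B}}^\perp(\bar z)\nabla g_{ineq,i}(\bar z)\big\|^2 .
\end{equation*}
The linear independence guaranteed by Remark~\ref{rem:fullRankJacobian} (the constraints indexed by $\mathcal{A}=\mathcal{B}\cup\{i\}$ are independent, so $\nabla g_{ineq,i}$ is not annihilated by $G_\mathcal{B}^\perp$) makes the norm strictly positive, so the sign of the removal test equals the sign of $\bar\nu_i$. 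Hence if $\bar\nu_i<0$ the test $\frac{\partial g_{ineq,i}}{\partial z}(\bar z)h_{\mathcal{B}}(\bar z)<0$ is satisfied; since the trajectory sits at $\bar z$ for all time, the dwell-time gate $t>T_s+\delta T$ is eventually cleared, index $i$ is removed, and then $h_\mathcal{B}(\bar z)\neq 0$ drives $z$ off $\bar z$, contradicting weak invariance. Therefore $\bar\nu\geq 0$, so $\nu^\star\geq 0$ and $\bar z$ satisfies \eqref{KKTconditions}.

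I would close by flagging the one delicate point in the stationarity argument that demands the most care. Because the addition branch of Algorithm~\ref{alg:switching} fires whenever $h_{\mathcal{A}}(\bar z)=0$ (the test reads $0\geq 0$), the active set visited along the stationary trajectory can always be taken to contain every index in $\mathcal{I}(\bar z)$; this both makes the choice of $\mathcal{A}$ unambiguous and ensures that the mode in which the contradiction is derived is genuinely consistent with the switching law. Verifying that no admissible add/remove sequence can keep the point stationary while some $\bar\nu_i<0$ persists, and that the equilibrium characterisation of Proposition~\ref{prop:equilibriumCharacterisation} applies to the resulting mode, is where the switched-systems bookkeeping must be carried out rigorously.
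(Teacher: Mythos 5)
Your proposal is correct and follows essentially the same route as the paper's proof: reduce the largest weakly invariant subset of $\Omega$ to subsystem equilibria via Proposition~\ref{prop:equilibriumCharacterisation}, build the candidate multipliers from \eqref{gradientExpression}, and use the removal branch of Algorithm~\ref{alg:switching} together with the identity $\frac{\partial g_{ineq,i}}{\partial z}(\bar z)h_{\mathcal{A}\backslash\{i\}}(\bar z)=\kappa_2\bar\nu_i\|G_{\mathcal{A}\backslash\{i\}}^\perp(\bar z)\nabla g_{ineq,i}(\bar z)\|^2$ to show that a negative multiplier triggers a switch that destroys invariance, while nonnegative multipliers leave the point fixed. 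Your explicit justification that the quadratic form is strictly positive (via the linear-independence consequence of Remark~\ref{rem:fullRankJacobian}) and your remark that the addition branch forces $\sigma=\mathcal{I}(\bar z)$ are exactly the two bookkeeping points the paper also relies on, so no gap remains.
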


\begin{proof}
	First, we establish that the set $\Omega$ can contain only equilibrium points of the subsystems. Note that on the set $\Gamma$, we have that $g_\mathcal{A}(z) = 0_{(m+a)\times 1}$ for all $z\in\Gamma$ and $\mathcal{A}\in\mathcal{S}$. Combining this property with the requirement that elements of $\Omega$ satisfy $G_{\mathcal{A}}^\perp(z)\nabla_z f(z) = 0_{(n-m-a)\times 1}$ for any given $\mathcal{A}$, the elements of $\Omega$ must be contained within the set of all equilibrium points of all subsystems by Proposition~\ref{prop:equilibriumCharacterisation}. Furthermore, for any subsystem equilibrium point $\bar z$ there exist corresponding vectors $\bar\lambda$ and $\bar\nu$ such that \eqref{gradientExpression} holds. We now examine which of these candidate points are invariant under the switching law in Algorithm~\ref{alg:switching}.
	
	Consider an arbitrary subsystem $\mathcal{A}\in\mathcal{S}$ at time $t_k$ with equilibrium point $\bar z\in\mathbb{R}^n$. We show that if $\sigma(t_k) = \mathcal{A} \subset \mathcal{I}(\bar z(t_k))$, then the switching law immediately updates such that $\sigma(t_k^+) = \mathcal{I}(\bar z)$. Furthermore, the point $\bar z$ remains an equilibrium point of the subsystem $\sigma(t_k^+)$. As the point $\bar z$ is an equilibrium, we have that $\frac{\partial g_{ineq,i}}{\partial z}(\bar z)h_{\sigma(t_k)}(\bar z) = 0$ for any $i\in\mathcal{I}(\bar z(t_k))\backslash\sigma(t_k)$. This scenario triggers the addition of the index $i$ to the active set by the switching law. The point remains an equilibrium after the switch as the conditions of Proposition~\ref{prop:equilibriumCharacterisation} continue to hold. Due to this fact, we will assume that $\sigma(t) = \mathcal{I}(\bar z(t))$ when considering equilibrium points of subsystems in subsequent arguments.
	
	We now consider the largest weakly invariant set contained in $\Omega$. Two properties are verified: 1)~If a point satisfies the KKT conditions, no further switching occurs, and it is invariant. 2)~If a point does not satisfy the KKT conditions, a switch occurs, and the point is not invariant.
	\begin{itemize}
	\item \textbf{Case 1 - $\bar z$, $\bar\lambda$, $\bar\nu$ satisfy the KKT conditions:} As we have that $\sigma(t) = \mathcal{I}(\bar z)$, it is not possible to add an index to the active set, so we only consider the potential impact of removing an index. As the point satisfies the KKT conditions, each element of $\bar\nu$ is non-negative. For removal of the index $k\in\sigma(t)$ from the active set, the switching law requires $\frac{\partial g_{ineq,k}}{\partial z}(\bar z)h_{\sigma(t)\backslash \left\lbrace k\right\rbrace}(\bar z) < 0$. Computing this expression results in
	\begin{equation}\label{switchConditionKKT}
		\begin{split}
			&\frac{\partial g_{ineq,k}}{\partial z}(\bar z)h_{\sigma(t)\backslash \left\lbrace k\right\rbrace}(\bar z)
			= \\
			&-\kappa_2\frac{\partial g_{ineq,k}}{\partial z}(\bar z)G_{\sigma(t)\backslash \left\lbrace k\right\rbrace}^{\perp\top}(\bar z)G_{\sigma(t)\backslash \left\lbrace k\right\rbrace}^\perp(\bar z)\nabla_z f(\bar z),
		\end{split}
	\end{equation}
	where we have substituted the subsystem dynamics from \eqref{optimisationDynamicsFactored} by noting that $g_{\sigma(t)} = 0_{(m+a)\times 1}$. As the tuple $\bar z$, $\bar\lambda$, $\bar\nu$ satisfy the KKT conditions, we can write
	\begin{equation*}
		\begin{split}
			\nabla f(\bar z)
			=
			- &\frac{\partial^\top g_{eq}}{\partial z}(\bar z)\bar\lambda
			- \frac{\partial^\top g_{ineq}}{\partial z}(\bar z) \bar\nu \\
			G_{\sigma(t)\backslash \left\lbrace k\right\rbrace}^\perp(\bar z)\nabla f(\bar z)
			&=
			- G_{\sigma(t)\backslash \left\lbrace k\right\rbrace}^\perp(\bar z)\frac{\partial^\top g_{ineq}}{\partial z}(\bar z) \bar\nu \\
			G_{\sigma(t)\backslash \left\lbrace k\right\rbrace}^\perp(\bar z)\nabla f(\bar z)
			&=
			- G_{\sigma(t)\backslash \left\lbrace k\right\rbrace}^\perp(\bar z)\frac{\partial^\top g_{ineq,k}}{\partial z}(\bar z) \bar\nu_k 
		\end{split}
	\end{equation*}
	due to the fact that it is a left annihilator for all constraints except for the $k^{th}$ inequality constraint. Substituting this expression into \eqref{switchConditionKKT} results in
	\begin{equation*}
		\begin{split}
			&\frac{\partial g_{ineq,k}}{\partial z}(\bar z)h_{\sigma(t)\backslash \left\lbrace k\right\rbrace}(\bar z)
			= \\
			&\bar\nu_k\kappa_2\frac{\partial g_{ineq,k}}{\partial z}(\bar z)G_{\sigma(t)\backslash \left\lbrace k\right\rbrace}^{\perp\top}(\bar z)G_{\sigma(t)\backslash \left\lbrace k\right\rbrace}^\perp(\bar z)\frac{\partial^\top g_{ineq,k}}{\partial z}(\bar z).
		\end{split}
	\end{equation*}
	As each element of $\bar\nu$ is non-negative, this expression is also non-negative. Consequently, a constraint index cannot be removed from the active set if the equilibrium point satisfies the KKT conditions and no further switching occurs.
	
	\item \textbf{Case 2 - $\bar z$, $\bar\lambda$, $\bar\nu$ do not satisfy the KKT conditions:}
	As the tuple $\bar z$, $\bar\lambda$, $\bar\nu$ do not satisfy the KKT conditions, at least one of the element of $\bar\nu$ must be negative. Assume that the $k^{th}$ element is negative. Left multiplying the expression \eqref{gradientExpression} by $G^\perp_{\sigma(t)}(\bar z)$ results in
	\begin{equation*}
		\begin{split}
			G^\perp_{\sigma(t)\backslash k}(\bar z)\nabla f(\bar z)
			  &=
			  -G^\perp_{\sigma(t)\backslash k}(\bar z)\frac{\partial^\top g_{ineq}}{\partial z}(\bar z)E^\top_{\sigma(t)}\bar\nu \\
			  &=
			  -G^\perp_{\sigma(t)\backslash k}(\bar z)\frac{\partial^\top g_{ineq,k}}{\partial z}(\bar z)\bar\nu_k. 
		\end{split}
	\end{equation*}
	Substituting this relationship into the subsystem dynamics of $\mathcal{A}\backslash k$ in \eqref{optimisationDynamicsFactored} results in
	\begin{equation*}
		h_{\sigma(t)\backslash k}(\bar z)
		=
		\kappa_2\bar\nu_k G_{\sigma(t)\backslash k}^{\perp\top}(z)G^\perp_{\sigma(t)\backslash k}(\bar z)\frac{\partial^\top g_{ineq,k}}{\partial z}(\bar z).
	\end{equation*}
	Considering the gradient of the $k^{th}$ inequality constraint along this flow vector results in
	\begin{equation*}
		\begin{split}
			&h_{\mathcal{A}\backslash k}(\bar z)\frac{\partial g_{ineq,k}}{\partial z}(\bar z)
			= \\
			&\kappa_2\bar\nu_k \frac{\partial g_{ineq,k}}{\partial z}(\bar z)G_{\mathcal{A}\backslash k}^{\perp\top}(z)G^\perp_{\mathcal{A}\backslash k}(\bar z)\frac{\partial^\top g_{ineq,k}}{\partial z}(\bar z) < 0,
		\end{split}
	\end{equation*}
	which results in the removal of the index $k$ from the active set by the switching law in Algorithm~\ref{alg:switching}. After the switch, the time derivative of $f$ is given by
	\begin{equation*}
		\begin{split}
			\dot f
			=&
			-\kappa_2
			\nabla_z^\top f(\bar z)
			G_{\sigma(t)\backslash k}^{\perp\top}(z)G^\perp_{\sigma(t)\backslash k}(\bar z)
			\nabla_z f(\bar z) \\
			=&
			-\kappa_2(\bar\nu_k)^2
			\frac{\partial g_{ineq,k}}{\partial z}(\bar z)
			G_{\sigma(t)\backslash k}^{\perp\top}(z) \\
			&\times G^\perp_{\sigma(t)\backslash k}(\bar z)
			\frac{\partial^\top g_{ineq,k}}{\partial z}(\bar z) < 0.
		\end{split}
	\end{equation*}
	Therefore, if a point does not satisfy the KKT conditions, a switch occurs, which results in a strict decrease of the objective function $f$, ensuring that the point is not invariant.
	\end{itemize}
 This concludes the proof.
\end{proof}

\subsection{Proof of Theorem~\ref{thm:convergence}}\label{sec:mainResultProof}
%We now present the proof of the main result of the paper---solution of the switched system formed by combining the subsystem dynamics~\eqref{optimisationDynamicsOriginal} with switching law given in Algorithm~\ref{alg:switching} converges to a point $z^\star\in\mathbb{R}^n$ that satisfies the first-order KKT conditions \eqref{KKTconditions}. Propositions~\ref{prop:restrictedDynamics} and~\ref{prop:LimitKKTconditions} together ensure that if the constraints are satisfied at the initial conditions, then the solution will converge to a point satisfying the KKT conditions. All that remains to be shown is that solutions that do not initially satisfy the equality constraints will similarly converge to solutions satisfying the KKT conditions.
%
%The proof of convergence relies on the uniqueness, continuity, and boundedness of solutions. As the equality constraints converge uniformly at an exponential rate, solutions to the switched system must approach the set $\Gamma$. The continuity of solutions then implies that the limit set $\Omega$ is shared by the solutions with initial conditions in $\mathcal{G}_{ineq}$.

We now present the proof of the main result, namely that the solution of the switched system obtained by combining the subsystem dynamics~\eqref{optimisationDynamicsOriginal} with the switching law in Algorithm~\ref{alg:switching} converges to a point $z^\star \in \mathbb{R}^n$ satisfying the first-order KKT conditions~\eqref{KKTconditions}. Propositions~\ref{prop:restrictedDynamics} and~\ref{prop:LimitKKTconditions} establish convergence to a KKT point for solutions whose initial conditions satisfy the constraints, and it therefore remains to show that solutions with initial conditions outside the equality constraint set also converge to such points. This follows from the uniqueness, continuity, and boundedness of solutions: since the equality constraints converge uniformly at an exponential rate, trajectories of the switched system approach the set $\Gamma$, and continuity implies that all solutions with initial conditions in $\mathcal{G}_{ineq}$ share the same limit set $\Omega$.

\begin{proofof}[Proof of Theorem~\ref{thm:convergence}]
	The proof follows from combining the previous claims. From Proposition~\ref{prop:existence_uniqueness}.\ref{prop:Existence:existenceUniqueness}, the solution of the switched system exists and is unique for all time. It is additionally positively invariant on the compact set $\mathcal{F}0$. Following the proof of \cite[Lemma 4.1]{khalilNonlinearSystems2001}, which requires continuity and boundedness of the solutions, the solution tends to a non-empty, compact, and invariant limit set $L$. Proposition~\ref{prop:feasibleSetPosInvar}.\ref{prop:subsystemBehaviour:gA} ensures that $g_{eq}(t)$ converges at an exponential rate for all $\sigma(t)$, and Proposition~\ref{prop:feasibleSetPosInvar}.\ref{prop:restrictedDynamics:positiveInvariance} ensures that $\mathcal{G}_{ineq}$ is positively invariant, which implies that the limit set $L$ is contained in $\Gamma$. By Proposition~\ref{prop:restrictedDynamics}.\ref{prop:restrictedDynamics:invariantSet}, all solutions within the set $\Gamma$ converge to $\Omega$, implying that $L\subseteq\Omega$. Finally, Proposition~\ref{prop:LimitKKTconditions} verifies that all points within the largest invariant set contained in $\Omega$ satisfy the KKT conditions. Therefore, we conclude that solutions of the switched system with $z(0)\in\mathcal{G}_{ineq}$ converge to a point satisfying the KKT conditions.
\end{proofof}

\begin{remark}\label{remark:tuning}
    In this paper, we have verified asymptotic convergence of the optimisation dynamics to a point satisfying the KKT conditions, but have not examined the rate of convergence. Despite this, we can provide some commentary on how the tuning parameters $\kappa_1, \kappa_2$ will influence the convergence rate. The constraint equations converge at the exponential rate \eqref{constrait_dynamics}, therefore, larger $\kappa_1$ will result in faster rate of convergence to the feasible set $\Gamma$. On the set $\Gamma$ the objective function decays according to \eqref{fDerivativeRestricted}, which is accelerated by increasing $\kappa_2$. Therefore, we conclude that increasing the value of $\kappa_1, \kappa_2$ will increase the rate of convergence to a solution.
\end{remark}

%\begin{remark}
%In some optimisation applications, the total computational time to reach a solution is of primary importance. While not directly considered in this work, we have performed a numerical comparison against commonly used discrete optimisation algorithms and the results are available in an extended version of the paper at: \url{https://arxiv.org/abs/2410.21570}. \blue{The tests used an ODE solver for evaluation and the numerical convergence was found to be faster than real-time, but slower than discrete time alternatives.}
%\end{remark}

%%%%%%%%%%%%%%%%%%%%%%%%%%%%%%%%%%%%%%%%%%%%%%%%%%%%%%%%%%%%%%%%%%%%%%%%%%%%%%%%
\section{Applications}\label{sec:examples}
In this section, three applications of the proposed optimisation dynamics are presented. In each case, it is shown how the constraints can be formulated in the form \eqref{probFormulation} and the technical assumptions are verified. The code used to generate the plots is available at \url{https://github.com/JoelFerguson/Switching_Dynamics_That_Converges_To_The_KKT_Point_Of_A_Nonlinear_Optimization_Problem}.
%\href{https://github.com/JoelFerguson/Switching_Dynamics_That_Converges_To_The_KKT_Point_Of_A_Nonlinear_Optimization_Problem}{https://github.com/JoelFerguson/Switching\_Dynamics\_That\_Converges\_To\_The\_KKT\_Point\_Of\_A\_Nonlinear\_Optimization\_Problem}. 

\subsection{Quadratic programming problem}
In this section, we show how general quadratic programming problems with positive definite objective functions can be solved using the proposed optimisation dynamics.

\subsubsection{Constraint formulation}
Consider the following quadratic programming problem with positive definite quadratic cost and linear constraints:
\begin{equation*}
	\begin{split}
		\underset{z}{\mathrm{argmin}} \ \frac12 z^\top Lz + K^\top z \ \ \ 
		s.t. \ &B_{eq}z + c_{eq} = 0_{m\times 1} \\
		&B_{ineq}z + c_{ineq} \leq 0_{p\times 1},
	\end{split}
\end{equation*}
where $L\in\mathbb{R}^{n\times n}$ is symmetric positive definite, $K\in\mathbb{R}^{n}$, $B_{eq}\in\mathbb{R}^{m\times n}$, $B_{ineq}\in\mathbb{R}^{p\times n}$, $c_{eq}\in\mathbb{R}^{m\times 1}$, and $c_{ineq}\in\mathbb{R}^{p\times 1}$. Noting that $f(z) = \frac12 z^\top Lz + K^\top z$ is the objective function, the problem can be recast into the form \eqref{probFormulation} by modifying the constraint equations as
\begin{equation*}
	\begin{split}
		\underbrace{B_{eq}L^{-1}}_{A_{eq}}\nabla f + \underbrace{\left(c_{eq} - B_{eq}L^{-1}K\right)}_{d_{eq}} &= 0_{m\times 1} \\
		\underbrace{B_{ineq}L^{-1}}_{A_{ineq}}\nabla f + \underbrace{\left(c_{ineq} - B_{ineq}L^{-1}K\right)}_{d_{ineq}} &\leq 0_{p\times 1}.
	\end{split}
\end{equation*}
To verify Assumption \ref{Assumption1}, we construct $\mathcal{B}_\mathcal{A}$ as
\begin{equation*}
	\mathcal{B}_\mathcal{A}
	=
	\frac{\partial g_\mathcal{A}}{\partial z}A_\mathcal{A}^\top
	=
	\underbrace{
	\begin{bmatrix}
		B_{eq} \\ E_\mathcal{A}B_{ineq}
	\end{bmatrix}}_{\frac{\partial g_\mathcal{A}}{\partial z}}
	\underbrace{
	L^{-1}
	\begin{bmatrix}
		B_{eq} \\ E_\mathcal{A}B_{ineq}
	\end{bmatrix}^\top}_{A_\mathcal{A}^\top}.
\end{equation*}
This expression is constant and positive definite for any $\mathcal{A}\in\mathcal{S}$. Therefore, Assumption \ref{Assumption1} holds for quadratic programming problems with positive definite $L$. As the second derivative of the associated Lagrangian function with respect to $z$ is positive definite, so any solution is locally optimal by Theorem~\ref{thm:KKTsufficient}.

\subsubsection{Numerical example}
The problem was tested for
\begin{multicols}{2}
	\noindent
	\begin{equation*}
		\begin{split}
			L\!=\! 
			\begin{bmatrix}
				1 & -1 \\ -1 & 2
			\end{bmatrix}\!,
			B_{ineq}\! =\! 
			\begin{bmatrix}
				1 & 1 \\
				-1 & 2
			\end{bmatrix}\!,
			K \!=\! 
			\begin{bmatrix}
				-2 \\ -6
			\end{bmatrix}\!,
			C_{ineq} \!=\! 
			\begin{bmatrix}
				-2 \\
				-2
			\end{bmatrix}\!.
		\end{split}
	\end{equation*}
\end{multicols}
%No equality constraints were considered for this problem. 
The tuning parameters for the optimisation dynamics were set to $\kappa_1 = 1$, $\kappa_2 = 1$, $\delta T = 0.1$, and the dynamics was initialised at $z(0) = \left[-0.25, 0\right]$.

The dynamic response of the optimisation dynamics is shown in Figure~\ref{qpExample}. The red and yellow lines show the boundaries of the inequality constraints, whereas the green line shows the dynamic response of the optimisation dynamics. Initially, the dynamics are unconstrained, and the dynamics follow a path of gradient descent. The solution then intersects the second inequality constraint, and the dynamics move along the constraint surface, seeking a minimum for the objective function. The solution eventually intersects the first inequality constraint, which fully constrains the solution. The final solution of the dynamics was at the point $z^\star = \left[0.667, 1.33\right]$.%, which agrees with the solution from \texttt{fmincon}.
\begin{figure}[ht!]
	\centering{}
	\includegraphics[width=0.90\columnwidth]{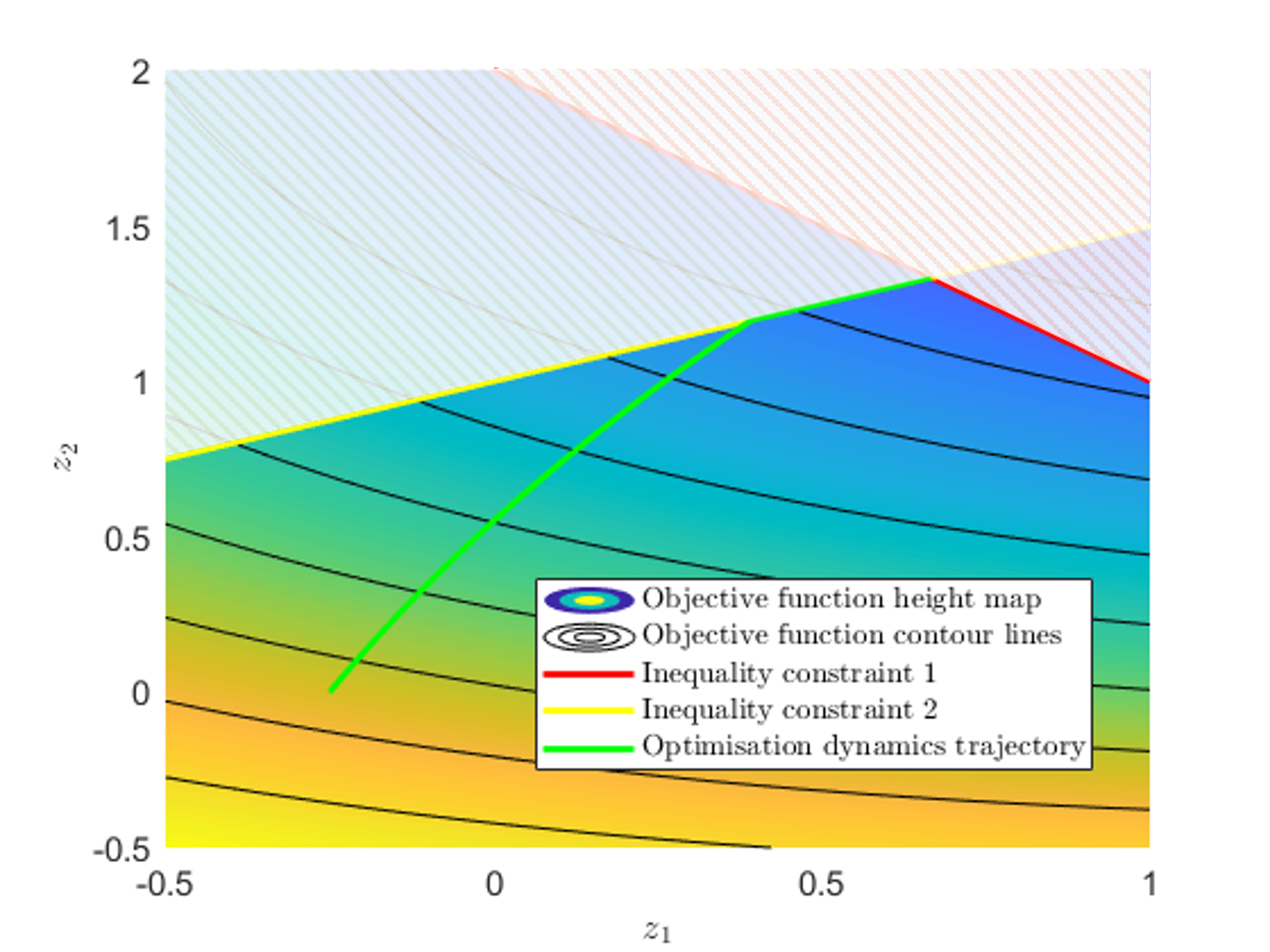}
	\caption{The trajectory of the optimisation dynamics seeking the minimiser of a QP problem subject to linear inequality constraints.}
	\label{qpExample}
\end{figure}

\subsection{Rosenbrock's function optimisation}
Rosenbrock's function is a commonly used benchmark for testing the performance of nonconvex optimisation algorithms as it is known to be difficult to minimise. Here, we consider the problem of minimising Rosenbrock's function
\begin{equation*}
	\begin{split}
		f(z)
		&=
		100(z_2 - z_1^2)^2 + (1 - z_1)^2
	\end{split}
\end{equation*}
subject to the linear inequality constraint
\begin{equation}\label{eg:rosenbrock:gineq}
	\begin{split}
		\underbrace{-2z_1 + z_2 + \frac{3}{4}}_{g_{ineq(z)}} \leq 0.
	\end{split}
\end{equation}
The representation of the inequality constraint is not currently in a gradient form as per \eqref{probFormulation}. To transform it into the form~\eqref{probFormulation}, we first evaluate the gradient of the objective function
\begin{equation*}
	\begin{split}
		\nabla_z f
		&=
		\begin{bmatrix}
			400z_1^2+2 & -400z_1 \\
			-200z_1 & 200
		\end{bmatrix}
		\begin{bmatrix}
			z_1 \\
			z_2
		\end{bmatrix}
		+
		\begin{bmatrix}
			-2 \\ 0	
		\end{bmatrix},
	\end{split}
\end{equation*}
which can be rearranged to find the relationship:
\begin{equation*}
	\begin{split}
		\begin{bmatrix}
			z_1 \\ z_2
		\end{bmatrix}
		&=
		\begin{bmatrix}
			\frac12 & z_1 \\
			\frac12 z_1 & z_1^2 + \frac{1}{200}
		\end{bmatrix}
		\nabla_z f
		+
		\begin{bmatrix}
			1 \\ z_1
		\end{bmatrix}.
	\end{split}
\end{equation*}
Expanding this expression and substituting the first line into the second results in
\begin{equation*}
	\begin{split}
		\begin{bmatrix}
			z_1 \\ z_2
		\end{bmatrix}
%		&=
%		\begin{bmatrix}
%			\frac12\nabla_{z_1} f + z_1\nabla_{z_2} f + 1 \\
%			\frac12 z_1\nabla_{z_1} f + (z_1^2 + \frac{1}{200})\nabla_{z_2} f + z_1
%		\end{bmatrix} \\
		&=
		\begin{bmatrix}
			\frac12\nabla_{z_1} f + z_1\nabla_{z_2} f + 1 \\
			(\frac12 z_1 + \frac12)\nabla_{z_1} f + (z_1^2 + z_1 + \frac{1}{200})\nabla_{z_2} f + 1
		\end{bmatrix},
	\end{split}
\end{equation*}
allowing the inequality constraint to be written as
\begin{equation*}
	\begin{split}
		\underbrace{
		\begin{bmatrix}
			-\frac{1}{2}+\frac{1}{2}z_1 & z_1^2-z_1+\frac{1}{200}
		\end{bmatrix}
		}_{A_{ineq}(z)}
		\nabla_z f
		+
		\underbrace{
		-\frac{1}{4}
		}_{d_{ineq}}
		\leq
		0.
	\end{split}
\end{equation*}
We now need to verify Assumption \ref{Assumption1} before proceeding. Note that the constraint~\eqref{eg:rosenbrock:gineq} has gradient
\begin{equation*}
	\begin{split}
		\frac{\partial g_{ineq}}{\partial z}
		=
		\begin{bmatrix}
			-2 & 1
		\end{bmatrix}.
	\end{split}
\end{equation*}
When the inequality constraint is inactive the matrix $B_\mathcal{A}$ is empty, and when the constraint is active it is given by
\begin{equation*}
	\begin{split}
		B_\mathcal{A}(z)
		&=
		\frac{\partial g_{ineq}}{\partial z}(z)A_\mathcal{A}^\top(z) \\
		&=
		\begin{bmatrix}
			-2 & 1
		\end{bmatrix}
		\begin{bmatrix}
			-\frac{1}{2}+\frac{1}{2}z_1 \\
			z_1^2-z_1+\frac{1}{200}
		\end{bmatrix}
		=
		z_1^2 - 2z_1 + \frac{201}{200}.
	\end{split}
\end{equation*}
This polynomial is positive-definite for all $z_1$, verifying Assumption~\ref{Assumption1}.

The optimisation dynamics were applied to Rosenbrock's function subject to the linear inequality constraint~\eqref{eg:rosenbrock:gineq}. The results are shown in Figure \ref{RosenbrockEg}. The objective function is indicated by the coloured height map, and the boundary of the inequality is indicated by the red line. {The tuning parameters were set to $\kappa_1 = 1$, $\kappa_2 = 1$, $\delta T = 0.1$ and} dynamics were initialised at $z(0) = \left[1.0, -1.0\right]$, which satisfies the inequality constraint. The dynamics follow a gradient descent until the solution is such that the inequality constraint holds with equality. At this point, the solution slides along the boundary of the constraint set until the objective function gradient points away from the constraint. The solution then moves away from the constraint and finds the function minimiser at $z^\star = \left[1.0, 1.0\right]$.

\begin{figure}[ht!]
	\centering{}
	\includegraphics[width=0.90\columnwidth]{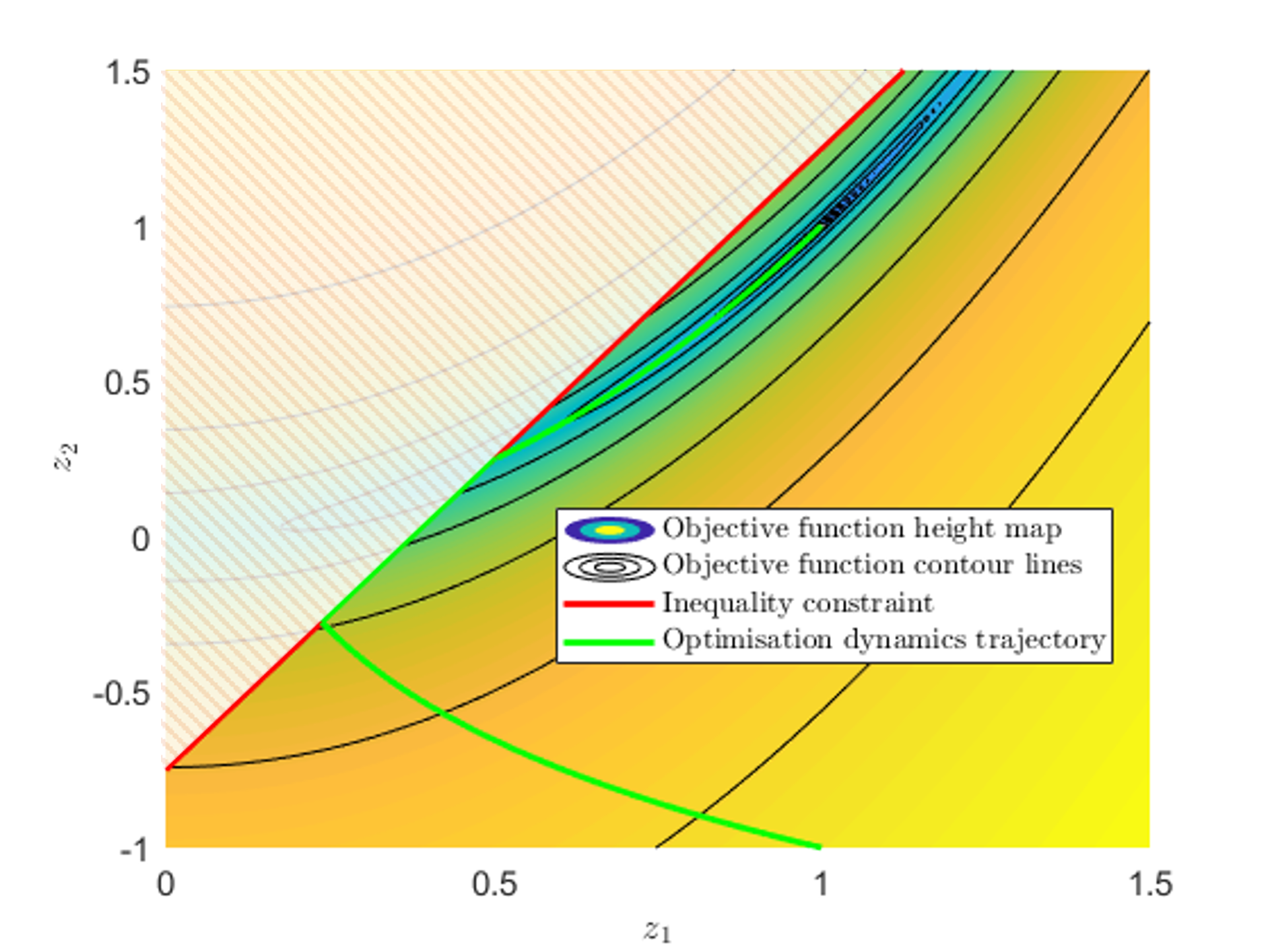}
	\caption{The trajectory of the optimisation dynamics seeking the minimiser of Rosenbrock's function.}
	\label{RosenbrockEg}
\end{figure}

\subsection{Energy-Efficient Buildings}\label{sec:examples:heatSharing}
In this section, we demonstrate how the proposed optimisation dynamics can be used to find a solution to a nonconvex optimisation problem aimed at enhancing the comfort of the occupants while minimising the power consumption of energy-efficient buildings with multiple zones.

\subsubsection{Constraint formulation}
The thermal dynamics of a building with multiple zones can be described by (see \cite{hatanakaIntegratedDesignOptimization2017}):
\begin{equation}\label{heatingDynamics}
	\begin{split}
		\underbrace{\begin{bmatrix}
			C_1 & 0 \\
			0 & C_2
		\end{bmatrix}}_{C}u
		&
		\begin{bmatrix}
			\dot T_1 \\
			\dot T_2
		\end{bmatrix}
		=
		-
		\underbrace{\begin{bmatrix}
			E_{11} & E_{12} \\
			E_{12}^\top & E_{22}
		\end{bmatrix}}_{E}
		\begin{bmatrix}
			T_1 \\
			T_2
		\end{bmatrix} \\
		&+
		\begin{bmatrix}
			R_1^{-1}(T^a - T_1) + A\operatorname{diag}(T^s - T_1)m + q \\
			R_2^{-1}(T^a - T_2)
		\end{bmatrix},
	\end{split}
\end{equation}
where $T_1\in\mathbb{R}^{n_1}$ are the temperatures in rooms equipped with heating, ventilation, and air-conditioning (HVAC) systems, whereas $T_2\in\mathbb{R}^{n_2}$ are the temperatures in rooms with no temperature control. The matrix $C$ describes the thermal capacitance of each room, whereas $E$ is a symmetric positive semi-definite matrix that describes the thermal resistance between each adjacent room. $T^a$ is a vector containing the ambient air temperature, and $R_i^{-1}$ describes the thermal resistance between each room and the ambient air. The matrix $A$ is diagonal, with each entry containing the specific heat of the air entering each zone. $T^s$ is the vector of supply air temperatures into each controlled zone, and $m$ is the control input describing the mass flow rate into each zone. The vector $q$ describes any constant thermal loads in each zone.
%, which may be the result of, for example, occupants. 

A common optimisation problem related to thermal dynamics is to find an equilibrium of the dynamic system \eqref{heatingDynamics} that minimises some objective function. The equilibrium condition is enforced by constructing equality constraints that are precisely the dynamics \eqref{heatingDynamics} with $\dot T_1 = 0_{n_1\times 1}$ and $\dot T_2 = 0_{n_2\times 1}$. Due to cross-terms between the temperature $T_1$ and the input $m$, the dynamics~\eqref{heatingDynamics} is bilinear and the corresponding optimisation problem becomes nonconvex. A possible cost function that is meaningful from an engineering viewpoint is given by
\begin{equation}
\label{eq:cost_func}
	f(z)
	=
	\frac12||T_1 - T_1^\star||^2_{L_1} + \frac12||T_2 - T_2^\star||^2_{L_2} + \frac12||m||^2_{L_m},
\end{equation}
where $T_1^\star$ and $T_2^\star$ are the vectors of desired temperatures for each zone, $L_1,\ L_2$, and $L_m$ are positive-definite and diagonal weighting matrices for each term in the cost function, and $z = \left[T_1^\top, T_2^\top, m^\top\right]$ is the vector of decision variables. The first two terms in the cost function~\eqref{eq:cost_func} ensure thermal comfort for the occupants of the building and the last term ensures minimisation of power consumption. The gradient of the cost function is given by
\begin{equation*}
	\nabla f
	=
	\begin{bmatrix}
		L_1(T_1 - T_1^\star) \\
		L_2(T_2 - T_2^\star) \\
		L_m m
	\end{bmatrix}.
\end{equation*}

To ensure that any solution to the optimisation problem is a feasible equilibrium, the solution should satisfy~\eqref{heatingDynamics} subject to $\dot T_1 = 0_{n_1\times 1}$ and $\dot T_2 = 0_{n_2\times 1}$. The corresponding constraint equation can be expressed in the form \eqref{probFormulation} with
\small
\begin{equation*}
	\begin{split}
%		A_{eq}(z)
%		=&
%		\begin{bmatrix}
%			-E_{11}-R_1^{-1}-A\operatorname{diag}(m) & -E_{12} & A\operatorname{diag}(T^s - T_1^\star) \\
%			-E_{12}^\top & -E_{22}-R_2^{-1} & 0
%		\end{bmatrix} \\
%		&\times
%		\begin{bmatrix}
%			L_1^{-1} & 0 & 0 \\
%			0 & L_2^{-1} & 0 \\
%			0 & 0 & L_m^{-1}
%		\end{bmatrix} \\
		A_{eq}(z)
		=&
		\begin{bmatrix}
			-E_{11}-R_1^{-1}-A\operatorname{diag}(m) & -E_{12} \\
			-E_{12}^\top & -E_{22}-R_2^{-1} \\
			A\operatorname{diag}(T^s - T_1^\star) & 0
		\end{bmatrix}^\top \\
		&\times
		\begin{bmatrix}
			L_1^{-1} & 0 & 0 \\
			0 & L_2^{-1} & 0 \\
			0 & 0 & L_m^{-1}
		\end{bmatrix} \\
		d_{eq}
		=&
		\begin{bmatrix}
			R_1^{-1}T^a + q \\
			R_2^{-1}T^a
		\end{bmatrix}
		-
		\begin{bmatrix}
			E_{11}+R_1^{-1} & E_{12} \\
			E_{12}^\top & E_{22}+R_2^{-1}
		\end{bmatrix}
		\begin{bmatrix}
			T_1^\star \\
			T_2^\star
		\end{bmatrix}.
	\end{split}
\end{equation*}
\normalsize
We note that at any equilibrium, we additionally require that the room temperatures $T_1$ must be strictly less than the supply temperature $T^s$. That is, $T^s-T_1 \geq \epsilon$ must hold for some $\epsilon > 0$. The inequality constraint can be written in the form \eqref{probFormulation} as
\begin{equation*}
	\begin{split}
		A_{ineq}(z)
		&=
		\begin{bmatrix}
			L_1^{-1} & 0 & 0
		\end{bmatrix} \\
		d_{ineq}
		&=
		\begin{bmatrix}
			T_1^\star - T^s + \epsilon
		\end{bmatrix}
		.
	\end{split}
\end{equation*}

Verification of Assumption~\ref{Assumption1} requires investigation of the constraint equations.
Combining the equality and inequality constraints, the full constraint  vector can be written in the form of \eqref{constraintsReduced} as
\small
\begin{equation*}
	\begin{split}
		&g_\mathcal{A}(z)
		= \\
		&\begin{bmatrix}
			-E_{11}-R_1^{-1}-A\operatorname{diag}(m) & -E_{12} & A\operatorname{diag}(T^s - T_1^\star) \\
			-E_{12}^\top & -E_{22}-R_2^{-1} & 0 \\
			E_\mathcal{A} & 0 & 0
		\end{bmatrix} \\
		&\times
		\begin{bmatrix}
			T_1-T_1^\star \\
			T_2-T_2^\star \\
			m
		\end{bmatrix}
		+
		\begin{bmatrix}
			-E_{11}T_1^\star - E_{12}T_2^\star + R_1^{-1}(T^a - T_1^\star) + q \\
			-E_{12}^\top T_1^\star - E_{22}T_2^\star + R_2^{-1}(T^a - T_2^\star) \\
			E_\mathcal{A}(T_1^\star - T^s)
		\end{bmatrix}.
	\end{split}
\end{equation*}
\normalsize
which has the Jacobian
\small
\begin{equation}\label{grad_ga}
	\begin{split}
		&\frac{\partial g_\mathcal{A}}{\partial z}(z)
		= \\
		&
		\begin{bmatrix}
			-E_{11}-R_1^{-1}-A\operatorname{diag}(m) & -E_{12} & A\operatorname{diag}(T^s - T_1) \\
			-E_{12}^\top & -E_{22}-R_2^{-1} & 0 \\
			E_\mathcal{A} & 0 & 0
		\end{bmatrix}.
	\end{split}
\end{equation}
\normalsize
Now we construct the term $B_\mathcal{A}(z)$, defined in \eqref{Bdefinition}, which results in the expression
\small
\begin{equation*}
	\begin{split}
		&B_\mathcal{A}(z)
		=
		\frac{\partial g_\mathcal{A}}{\partial z}(z)A_\mathcal{A}^\top(z) \\
		&=
		\begin{bmatrix}
			-E_{11}-R_1^{-1}-A\operatorname{diag}(m) & -E_{12} \\
			-E_{12}^\top & -E_{22}-R_2^{-1} \\
			E_\mathcal{A} & 0
		\end{bmatrix} \\
		&\times
		\begin{bmatrix}
			L_1^{-1} & 0 \\
			0 & L_2^{-1}
		\end{bmatrix}
		\begin{bmatrix}
			-E_{11}-R_1^{-1}-A\operatorname{diag}(m) & -E_{12} \\
			-E_{12}^\top & -E_{22}-R_2^{-1} \\
			E_\mathcal{A} & 0
		\end{bmatrix}^\top \\
		&+
		\begin{bmatrix}
			A\operatorname{diag}(T^s - T_1^\star)L_m^{-1}\operatorname{diag}(T^s - T_1)A & 0 & 0 \\
			0 & 0 & 0 \\
			0 & 0 & 0
		\end{bmatrix}.
	\end{split}
\end{equation*}
\normalsize
This expression is positive definite for all $T^s - T_1 \geq \epsilon$, which is ensured by the inequality constraint. We conclude that Assumption~\ref{Assumption1} is satisfied.

\subsubsection{Sufficient condition for local optimality}
Satisfaction of Assumption~\ref{Assumption1} ensures that the optimisation dynamics will converge to a point satisfying the KKT conditions by Theorem~\ref{thm:convergence}. Further investigation, however, is required to determine if such a point is locally optimal. A second order sufficient condition for local optimality was given in Theorem~\ref{thm:KKTsufficient}, which inspects the sign of the Hessian of the Lagrangian in directions consistent with the constraints. Following the discussion in \cite[Section 12.4]{nocedalNumericalOptimization1999}, this condition can be equivalently stated in terms of the projected Hessian. Using the definition of $G_\mathcal{A}^\perp(z)$, defined in \eqref{gPerpDef}, a triple $(x^\star,\lambda^\star,\mu^\star)$ is a locally optimal solution to \eqref{probFormulation} if it satisfies the KKT conditions and 
	\begin{equation}\label{projectedHessian}
		G^\perp_\mathcal{A}(z^\star)\frac{\partial^2\mathcal{L}}{\partial z^2}(z^\star,\lambda^\star,\nu^\star)G^{\perp\top}_\mathcal{A}(z^\star)
		>
		0.
\end{equation}

The Lagrangian for the building heating network problem can be written as
\begin{equation*}
	\begin{split}
		\mathcal{L}
		&=
		f(x)
		+
		\begin{bmatrix}
			\lambda_1^\top & \lambda_2^\top
		\end{bmatrix}
		g_{eq}(z)
		+
		\nu^\top g_{ineq}(z),
	\end{split}
\end{equation*}
where $g_{eq}$ and $g_{ineq}$ are defined as per \eqref{probFormulation}.
We can now compute the Hessian of the Lagrangian with respect to $z$ to assess the sufficient condition for local optimality. As all of the inequality constraints are linear, they do not impact the Hessian. Calculations reveal the Hessian as
\begin{equation}\label{heatingHessian}
	\begin{split}
		\frac{\partial^2\mathcal{L}}{\partial z^2}
		=
		\begin{bmatrix}
			L_1 & 0 & -A\operatorname{diag}(\lambda_1) \\
			0 & L_2 & 0 \\
			-A\operatorname{diag}(\lambda_1) & 0 & L_m
		\end{bmatrix}.
	\end{split}
\end{equation}
The presence of the Lagrange multipliers $\lambda_1$ in this expression makes the sign indefinite. To determine an expression for the Lagrange multiplier, consider the derivative 
\begin{equation*}
	\begin{split}
		\frac{\partial\mathcal{L}}{\partial m}
		=
		L_m m + A\operatorname{diag}(T^s - T_1)\lambda_1,
	\end{split}
\end{equation*}
which must be equal to zero at an equilibrium by the KKT condition \eqref{KKTconditions:gradient}. As $T^s - T_1 \geq \epsilon$ via the inequality constraints, we have that
\begin{equation*}
	\lambda_1
	=
	-\operatorname{diag}(T^s - T_1)^{-1}A^{-1}L_m m.
\end{equation*}
This Lagrange multiplier can be substituted into \eqref{heatingHessian} to find the Hessian of the Lagrangian as 
\begin{equation*}
	\begin{split}
		\frac{\partial^2\mathcal{L}}{\partial z^2}
		=
		\begin{bmatrix}
			L_1 & 0 & X(T_1,m) \\
			0 & L_2 & 0 \\
			X(T_1,m) & 0 & L_m
		\end{bmatrix},
	\end{split}
\end{equation*}
where $X(T_1,m) = \operatorname{diag}(T^s - T_1)^{-1}L_m\operatorname{diag}(m)$. Unfortunately, this expression is still sign indefinite, so we will instead consider the projected Hessian introduced in \eqref{projectedHessian}. We let $E_\mathcal{A}^\perp$ be a full rank left annihilator of $E_\mathcal{A}^\top$, satisfying $E_\mathcal{A}^\perp E_\mathcal{A}^\top = 0$. The left annihilator of the constant gradient \eqref{grad_ga} is given by
\begin{equation*}
	\begin{split}
		G_\mathcal{A}^\perp(z)
		&=
		E_\mathcal{A}^\perp
		\underbrace{
		\begin{bmatrix}
			I \\ 
			-(E_{22} + R_2^{-1})^{-1}E_{12}^\top \\
			\operatorname{diag}(T^s-T_1)^{-1}A^{-1}\left\lbrace Y+A\operatorname{diag}(m)\right\rbrace
		\end{bmatrix}^\top}_{:= Z} \\
		Y &= E_{11}+R_1^{-1}-E_{12}(E_{22}+R_2^{-1})E_{12}^\top.
	\end{split}
\end{equation*}
Using this expression, the projected Hessian \eqref{projectedHessian} can be computed as
\begin{equation*}
	\begin{split}
		G^\perp_\mathcal{A}\frac{\partial^2\mathcal{L}}{\partial z^2}G^{\perp\top}_\mathcal{A}
		=
		E_\mathcal{A}^\perp Z\frac{\partial^2\mathcal{L}}{\partial z^2}Z^\top E_\mathcal{A}^{\perp\top},
	\end{split}
\end{equation*}
which is positive for any combination of inequality constraints if $Z\frac{\partial^2\mathcal{L}}{\partial z^2}Z^\top > 0$. Expanding this expression yields
\begin{equation}\label{projectedHessianComputations1}
	\begin{split}
		&Z\frac{\partial^2\mathcal{L}}{\partial z^2}Z^\top \\
		&=
		L_1
		+
		E_{12}(E_{22} + R_2^{-1})^{-1}L_2(E_{22} + R_2^{-1})^{-1}E_{12}^\top \\
		&+
		\left\lbrace Y+A\operatorname{diag}(m)\right\rbrace
		\operatorname{diag}(T^s-T_1)^{-2}A^{-2}
		\left\lbrace Y+A\operatorname{diag}(m)\right\rbrace \\
		&+
		\left\lbrace Y+A\operatorname{diag}(m)\right\rbrace A^{-1}\operatorname{diag}(T^s-T_1)^{-1}X \\
		&+
		X\operatorname{diag}(T^s-T_1)^{-1}A^{-1}\left\lbrace Y+A\operatorname{diag}(m)\right\rbrace.
	\end{split}
\end{equation}
Considering the last two terms of this expression, we can substitute in for the expression $X$ and complete the squares,
\begin{equation}\label{projectedHessianComputations2}
	\begin{split}
		&\left\lbrace Y+A\operatorname{diag}(m)\right\rbrace A^{-1}\operatorname{diag}(T^s-T_1)^{-2}L_m\operatorname{diag}(m) \\
		&+
		L_m\operatorname{diag}(m)\operatorname{diag}(T^s-T_1)^{-2}A^{-1}\left\lbrace Y+A\operatorname{diag}(m)\right\rbrace \\
		&=
		2\left[
			\operatorname{diag}(m) + \frac12 YA^{-1}
		\right]
		L_m \operatorname{diag}(T^s-T_1)^{-2} \\
		&\times
		\left[
			\operatorname{diag}(m) + \frac12 YA^{-1}
		\right]^\top \\
		&-
		\frac12 YA^{-1}L_m \operatorname{diag}(T^s-T_1)^{-2}A^{-1}Y.
	\end{split}
\end{equation}
As the inequality constraint $T^s-T_1 \geq \epsilon$ is satisfied along the trajectories of the optimisation dynamics, the second term of this expression satisfies
\begin{equation}\label{projectedHessianComputations3}
	\begin{split}
		-\frac12 YA^{-1}L_m \operatorname{diag}(T^s-T_1)^{-2}&YA^{-1} \\
		&\geq
		-\frac{1}{2\epsilon^2}YA^{-1}L_mYA^{-1}.
	\end{split}
\end{equation}
Substitution of \eqref{projectedHessianComputations2} and \eqref{projectedHessianComputations3} into \eqref{projectedHessianComputations1} reveals that the projected Hessian in \eqref{projectedHessianComputations1} will be positive definite provided that the objective function gains $L_1$, $L_2$, and $L_m$ satisfy
\begin{equation}\label{eg:heating:sufficientCondition}
	\begin{split}
		L_1
		&+
		E_{12}(E_{22} + R_2^{-1})^{-1}L_2(E_{22} + R_2^{-1})^{-1}E_{12}^\top \\
		&-
		\frac{1}{2\epsilon^2}YA^{-1}L_mA^{-1}Y
		>
		0.
	\end{split}
\end{equation}
Consequently, any solution to the optimisation dynamics will be a locally optimal solution.

\subsubsection{Numerical example}
The optimisation problem related to energy-efficient buildings was solved using the parameters
\small
\begin{multicols}{3}
	\noindent
	\begin{equation*}
		\begin{split}
			n_1 &= 1 \\
			T_1^\star &= 23 \\
			C_1 &= 1 \\
			E_{12} &= -0.5 \\
			R_2 &= 2 \\
			A &= 1
		\end{split}
	\end{equation*}
	\begin{equation*}
		\begin{split}
			n_2 &= 1 \\
			T_2^\star &= 23 \\
			C_2 &= 1 \\
			E_{22} &= 0.5 \\
			q &= 0 \\
			\bar m &= 3.
		\end{split}
	\end{equation*}
	\begin{equation*}
		\begin{split}
			T^s &= 30 \\
			T^a &= 10 \\
			E_{11} &= 0.5 \\
			R_1 &= 2 \\
			\epsilon &= 1 \\
		\end{split}
	\end{equation*}
\end{multicols}
\normalsize
\noindent
The objective function gains were chosen as $L_1 = 1$, $L_2 = 1$, and $L_m = 0.1$, which satisfy the sufficient condition for local optimality \eqref{eg:heating:sufficientCondition}. {The optimisation dynamics were initialised at $T_1(0) = 23$, $T_2(0) = 23$, $m(0) = 1$ and two sets of tuning gains $\kappa_1 = 1$, $\kappa_2 = 1$, $\delta T = 0.1$, and $\kappa_1 = 3$, $\kappa_2 = 3$, $\delta T = 0.1$, were used.} 
%Note that this initial condition is not an equilibrium point of the dynamics \eqref{heatingDynamics} and is therefore not a feasible solution to the optimisation problem. 
The resulting trajectory of the optimisation dynamics is shown in Figure \ref{HeatingEg}. The dynamics converge to the point $(T_1, T_2, m) = (25.45, 17.73, 2.55)$. Considering Figure~\ref{HeatingEg}, the first plot shows the evolution of the states along the solution of the optimisation dynamics. The second plot shows the satisfaction of the equality constraints. {The solid lines show the results for the tuning gains $\kappa_1 = 1$, $\kappa_2 = 1$, $\delta T = 0.1$, whereas the dashed lines show the plot for the tuning gains $\kappa_1 = 3$, $\kappa_2 = 3$, $\delta T = 0.1$.} As can be seen in the plot, the constraints are not satisfied at the initial conditions but converge to the origin. This convergence ensures that the solution is indeed an equilibrium of the thermal dynamics~\eqref{heatingDynamics}. Moreover, increasing the tuning gains has increased the rate of convergence to the solution of the optimisation problem as discussed in Remark~\ref{remark:tuning}.

\begin{figure}[ht!]
	\centering{}
	\includegraphics[width=0.90\columnwidth]{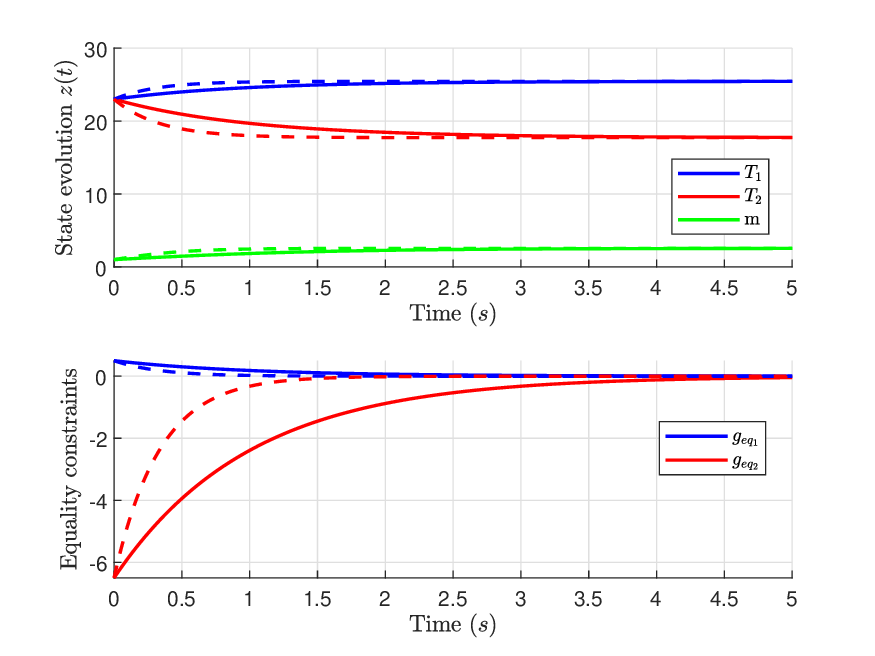}
	\caption{The trajectory of the optimisation dynamics seeking a solution to the optimisation problem related to energy-efficient buildings. The first plot shows the evolution of the optimisation dynamics, whereas the second plot shows the convergence of the equality constraints to the origin. {The solid line indicate the results for tuning gains $\kappa_1 = 1$, $\kappa_2 = 1$, $\delta T = 0.1$, whereas the dashed lines indicate the results for tuning gains $\kappa_1 = 3$, $\kappa_2 = 3$, $\delta T = 0.1$.}}
	\label{HeatingEg}
\end{figure}
%%%%%%%%%%%%%%%%%%%%%%%%%%%%%%%%%%%%%%%%%%%%%%%%%%%%%%%%%%%%%%%%%%%%%%%%%%%%%%%%%%%%%%%%%%%%%%%%%%%%%%%%%%%%%%%%%%%%%%%%%%%%%%%%%%%%%%%%%%%%%%%%%%%%%%%%%%%%%%%%%%%%%%%%%%%%%%%%%%%%%%%%
\section{Conclusion}
\label{sec:conclusion}
Motivated by the need of a convergent algorithm for solving nonconvex optimisation problems, this paper presents novel switching dynamics that converges to the KKT point of a class of nonconvex optimisation problems subject to both equality and inequality constraints. {Interesting future research avenues of the work include:
\begin{itemize}
	\item The primary limitation of this approach is Assumption \ref{Assumption1}. Future work will investigate how this limitation can be relaxed to broaden the class of applicable problems.
	\item This paper did not address the rate of convergence to a solution. It is expected that under some restriction on the class of admissible objective functions, the rates of convergence (e. g. exponential) could be established.
	\item Continuous-time optimisation can be incorporated into a control scheme for online optimisation, similar to \cite{steginkUnifyingEnergyBasedApproach2017}. In future work, we will investigate how this scheme can be incorporated into control structures. We will also study how to shape the structural properties of the algorithm to allow distributed optimisation and control problems.
\end{itemize}}

\section{Appendix}
\subsection{Execution time comparison with Discrete-Time  Algorithms}
In this paper, we focused on the convergence properties of the proposed optimisation dynamics represented by an ODE with respect to its independent variable $t$. On the other hand, in some applications, it is useful to consider the computational time required to reach a solution (referred to as runtime or execution time). Motivated by this, in this section, we perform a numerical comparison (in terms of execution time) of the proposed optimisation dynamics with three discrete-time methods available in MATLAB via the \texttt{fmincon} function. For this comparison, we considered the real-world building heating optimisation problem presented in Section~\ref{sec:examples:heatSharing}.

To compare the solutions of the optimisation dynamics with discrete-time algorithms, we approximated the solutions using Matlab's \texttt{ode23s} solver with a relative error tolerance of $1\times10^{-6}$ and active constraint tolerance of $1\times10^{-10}$. Note that this choice is independent of the presented theory. A comparison of the trajectories generated by each of the methods is given in Figure~\ref{comparisonFigure}. We also provide in Table~\ref{tab:times}  the total execution time of each of the methods.
% \begin{center}
% \begin{tabular}{|c|c|}
% \hline
% \textbf{Algorithm} & \textbf{Execution time} \\ \hline
% Hybrid optimisation dynamics (ode23s, relative tolerance error $1\times10^{-6}$) & 0.8740 s \\ \hline
% Matlab's Fmincon (interior point) & 0.3461 s \\ \hline
% Matlab's Fmincon (sequential quadratic programming) & 0.1424 s \\ \hline
% Matlab's Fmincon (active set) & 0.127 s \\ \hline
% \end{tabular}
% \end{center}
\begin{table}[t]
  \caption{Real-time execution of optimization methods}
  \label{tab:times}
  \centering
  \setlength{\tabcolsep}{4pt}           % tighter columns (optional)
  \renewcommand{\arraystretch}{1.1}     % slightly taller rows (optional)
  \begin{tabularx}{\columnwidth}{@{}>{\raggedright\arraybackslash}X r@{}}
    \toprule
    \textbf{Algorithm} & \textbf{Time [s]} \\
    \midrule
    Optimisation dynamics (ode23s, relative tolerance \(1\times10^{-6}\)) & 0.8740 \\
    Interior-point                                      & 0.3461 \\
    SQP                                                 & 0.1424 \\
    Active-set                                            & 0.1270 \\
    \bottomrule
  \end{tabularx}
\end{table}

While the proposed method is slower in real-time execution compared to the discrete-time methods, this is not surprising since ODE solvers are generally computationally more expensive. Note that for the example considered in Section~\ref{sec:examples:heatSharing}, the time horizon  for the independent variable (see Fig.~4) is 5~seconds, and the execution time is still significantly faster than real-time. This demonstrates that  the algorithm is suitable for real-time applications such as those  in \cite{steginkUnifyingEnergyBasedApproach2017}. Nevertheless, computational times will depend on the specific problem setup and the choice of ODE solver. 

\begin{figure}[ht!]
	\centering{}
	\includegraphics[width=1\columnwidth]{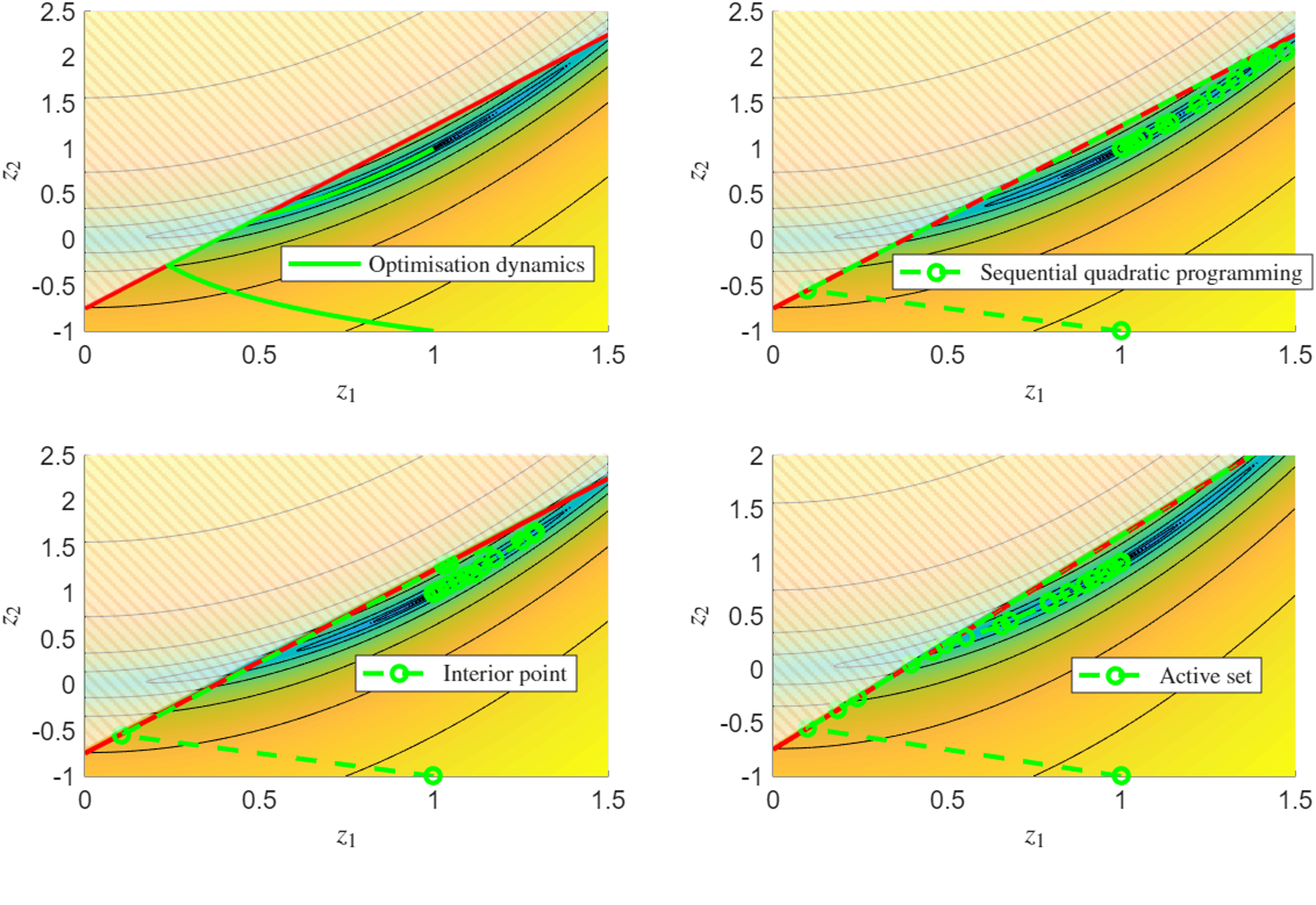}
	\caption{The trajectories generated by the proposed optimisation dynamics and three different discrete-time optimisation algorithms. For each of the SQP, Interior point and Active set methods, the solutions are defined discretely and the steps of the solver are indicated by the $\texttt{o}$ symbols. The optimsation dynamics defines a full trajectory, which has been approximated by Matlab's \texttt{ODE23s} solver and plotted as a solid line in the above figure.}
	\label{comparisonFigure}
\end{figure}

\bibliographystyle{IEEEtran}
\bibliography{library_new}

\vspace{-1em}
\begin{IEEEbiography}[{\includegraphics[width=1in,height=1.25in,clip,keepaspectratio]{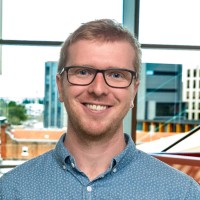}}]{Joel Ferguson} received his B.Eng. (Hons) in Mechatronic Engineering from the University of Newcastle in 2013, being awarded the Dean’s medal and University medal. He received his Ph.D. in Electrical Engineering from the same institution in 2018 where he went on to hold a lecturing position in mechatronics from 2019 - 2024. In 2024 he joined the Maynooth International Engineering College as an assistant professor in electronic engineering. His research interests include nonlinear control, port-Hamiltonian systems and robotics.
\end{IEEEbiography}

\begin{IEEEbiography}[{\includegraphics[width=1in,height=1.25in,clip,keepaspectratio]{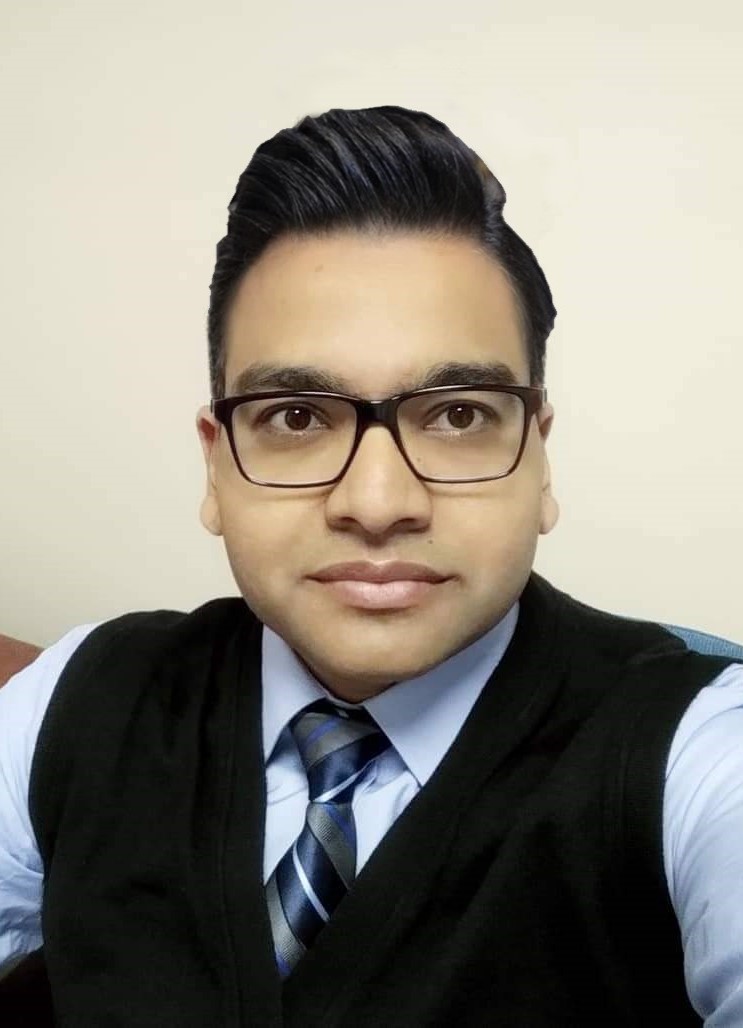}}]{Saeed Ahmed} is an Assistant Professor of Systems and Control at the University of Groningen, where he is affiliated with the Engineering and Technology Institute Groningen and the Jan C. Willems Center for Systems and Control. Prior to joining this position, he held postdoctoral positions at the University of Groningen with Jacquelien Scherpen and at the Technical University of Kaiserslautern (now RPTU), Germany. He completed his Ph.D. at Bilkent University, Turkey. During his Ph.D., he was a visiting scholar at CentraleSupélec, France. His Ph.D. was supervised by  Frederic Mazenc and Hitay Ozbay. He also collaborated with Micheal Malisoff during his Ph.D. His research interests span various topics in systems and control engineering. From a theoretical point of view, he is interested in stability and control, online optimisation, observer design, nonlinear and hybrid (switched and impulsive) systems, dissipativity and passivity analysis, robust control, LPV systems, and time-delay systems. From an application point of view, he is interested in designing intelligent control algorithms for autonomous vehicles and district heating systems. He won several awards including the best presentation award in the Control/Robotics/Communications/Network category at the IEEE Graduate Research Conference 2018 held at Bilkent University, Turkey, and the Outstanding Reviewer Award by the European Journal of Control. He is an associate editor of Systems and Control Letters and IEEE CSS Technology Conference Editorial Board. He is a member of the IFAC Technical Committees on Non-linear Control Systems, Networked Systems, and Distributed Parameter Systems. 
\end{IEEEbiography}

\begin{IEEEbiography}[{\includegraphics[width=1in,height=1.25in,clip,keepaspectratio]{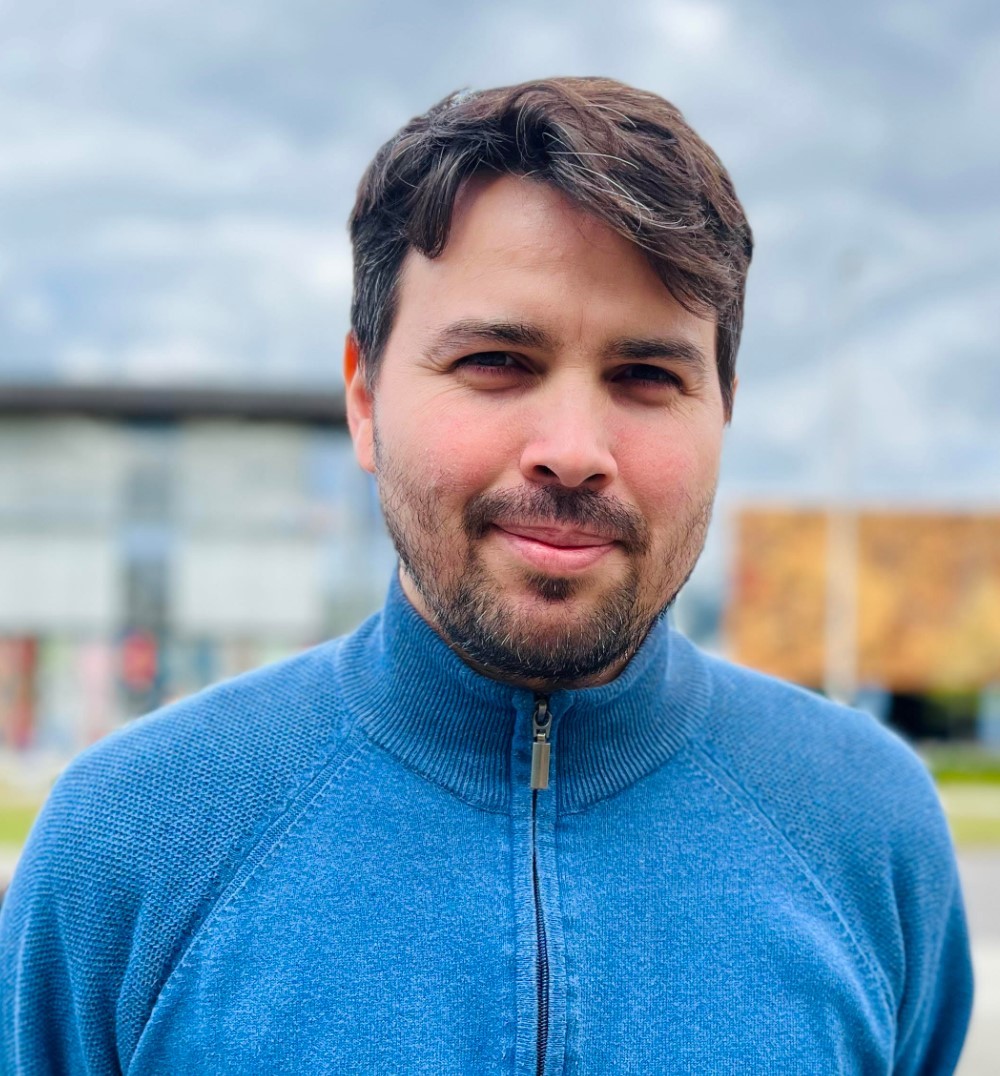}}]{Juan E. Machado}  received the B.Sc. degree in electromechanics from the Technological Institute of
La Paz (ITLP), La Paz, Mexico, in 2012, the M.Sc. degree in applied mathematics from the Center for Research in Mathematics (CIMAT), Guanajuato, Mexico, in 2015, and defended his Ph.D. thesis in automatic control from Paris-Saclay University, Gif-sur-Yvette, France, in 2019. From January 2020 to June 2023, he was a Post-Doctoral Researcher at the Faculty of Science
and Engineering, University of Groningen, Groningen, The Netherlands. Since July 2023, he has been with the Chair of Control Systems and Network Control Technology, Faculty of Mechanical Engineering, Electrical and Energy Systems, Brandenburg University of Technology (BTU), Cottbus, Germany, where he is currently a Junior Research Group Leader of the Young Investigator Group (YIG) “Distributed Control and Operation of Integrated Energy Systems,” which is part of the Project “Control Systems and Cyber Security Lab (COSYS Lab)” of the Energy Innovation Center (EIZ). His research interests are within the analysis and control of nonlinear and networked systems, with a strong emphasis on electric and heat grids
\end{IEEEbiography}

\begin{IEEEbiography}[{\includegraphics[width=1in,height=1.25in,clip,keepaspectratio]{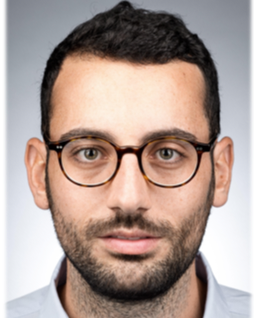}}]{Michele Cucuzzella} (Senior Member, IEEE)  received the
M.Sc. degree (Hons.) in electrical engineering and
the Ph.D. degree in systems and control from the
University of Pavia, Pavia, Italy, in 2014 and 2018,
respectively.
From 2017 to 2020, he held a postdoctoral position
at the University of Groningen (UG), Groningen,
The Netherlands. He then joined the University of
Pavia as an Assistant Professor. In 2024, he moved to
UG as an Associate Professor at the Engineering and
Technology institute Groningen, Faculty of Science
and Engineering. He is also a Visiting Associate Professor at Hiroshima
University, Higashihiroshima, Japan. He has coauthored the book Advanced
and optimisation Based Sliding Mode Control: Theory and Applications
(SIAM, 2019). His research activities are mainly in the area of nonlinear
control with application to the energy domain and smart complex systems.
Dr. Cucuzzella is a member of the EUCA Conference Editorial Board
and the IEEE CSS Technology Conferences Editorial Board. He received
the Certificate of Outstanding Service as a Reviewer of the IEEE CONTROL
SYSTEMS LETTERS 2019. He also received the 2020 IEEE TRANSACTIONS
ON CONTROL SYSTEMS TECHNOLOGY Outstanding Paper Award, the IEEE
Italy Section Award for the best Ph.D. thesis on new technological challenges
in energy and industry, and the SIDRA Award for the best Ph.D. thesis in
the field of systems and control engineering. He was also the finalist for
the EECI Award for the best Ph.D. thesis in Europe in the field of control
for complex and heterogeneous systems and the IEEE-CSS Italy Best Young
Paper Award. He has been serving as an Associate Editor for the European
Journal of Control since 2022.

\end{IEEEbiography}

\begin{IEEEbiography}[{\includegraphics[width=1in,height=1.25in,clip,keepaspectratio]{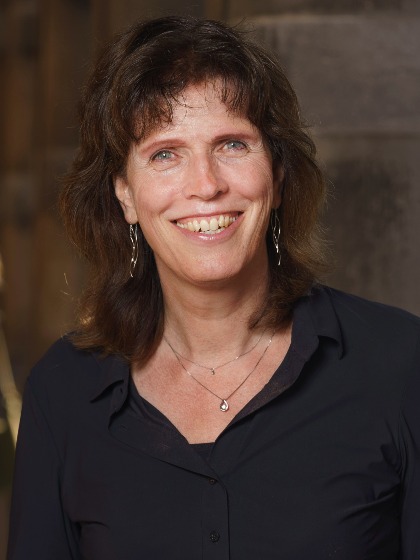}}]{Jacquelien M. A. Scherpen}  (Fellow, IEEE) received
the M.Sc. and Ph.D. degrees from the University
of Twente, Enschede, The Netherlands, in 1990 and
1994, respectively.
She then joined the Delft University of Technology, Delft, The Netherlands, and in 2006, she
moved to the Engineering and Technology Institute
Groningen (ENTEG), Faculty of Science and Engineering, University of Groningen (UG), Groningen,
The Netherlands, where she was the Scientific Director of ENTEG and the Director of engineering. She
is currently the Rector Magnificus of UG. Furthermore, she has been the
Captain of Science of the Dutch Top Sector High Tech Systems and Materials
(HTSM). She has held various visiting research positions at the University
of Tokyo, Tokyo, Japan; Kyoto University, Kyoto, Japan; Old Dominion
University, Norfolk, VA, USA; the Université de Compiègne, Compiègne,
France; and Supélec, Gif-sur-Yvette, France. Her current research interests
include model reduction methods for networks, nonlinear model reduction
methods, nonlinear control methods, modeling and control of physical systems
with applications to electrical circuits, electromechanical systems, mechanical
systems, smart energy networks, and distributed optimal control for smart
grids.
Dr. Scherpen received the 2017–2020 Automatica Best Paper Prize. In 2019,
she received the Royal Distinction as Knight in the Order of the Netherlands
Lion. In 2023, she was awarded the Prince Friso Prize for Engineer of the
Year in The Netherlands. She has been active at the International Federation
of Automatic Control (IFAC) and the IEEE Control Systems Society. She
was the President of the European Control Association (EUCA) and has
chaired the SIAM Activity Group on Control and Systems Theory. She has
been on the Editorial Board of a few international journals among which
are IEEE TRANSACTIONS ON AUTOMATIC CONTROL and the International
Journal of Robust and Nonlinear Control.
\end{IEEEbiography}

\end{document}